\newtheorem{theorem}{Theorem}[section]
\newtheorem{corollary}[theorem]{Corollary}
\newtheorem{lemma}[theorem]{Lemma}
\newtheorem{proposition}[theorem]{Proposition}
\theoremstyle{definition}
\newtheorem{definition}[theorem]{Definition}
\newtheorem{example}[theorem]{Example}
\newtheorem{claim}[theorem]{Claim}
\newtheorem{fact}[theorem]{Fact}
\newcommand{\rC}{\mathrm{C}}
\newcommand{\cA}{\mathcal{A}}
\newcommand{\cB}{\mathcal{B}}
\newcommand{\cM}{\mathcal{M}}
\newcommand{\cF}{\mathcal{F}}
\newcommand{\cH}{\mathcal{H}}
\newcommand{\bE}{\mathbb{E}}
\newcommand{\bB}{\mathbb{B}}
\newcommand{\bC}{\mathbb{C}}
\newcommand{\bF}{\mathbb{F}}
\newcommand{\bN}{\mathbb{N}}
\newcommand{\bM}{\mathbb{M}}
\newcommand{\bQ}{\mathbb{Q}}
\newcommand{\cstar}{\ensuremath{\mathrm{C}^{*}}}
\newcommand{\cstarl}[1]{\ensuremath{\cstar_{\lambda}(#1)}}
\newcommand{\too}{\longrightarrow}
\newcommand{\ip}[2]{{\left\langle{#1},{#2}\right\rangle}}
\newcommand{\unorm}[1]{\left\Vert #1 \right\Vert_{\mathrm{u}}}
\newcommand{\BH}{\ensuremath{\bB(\cH)}}
\DeclareMathOperator{\dom}{dom}
\DeclareMathOperator{\pAut}{pAut}
\DeclareMathOperator{\Sym}{Sym}
\DeclareMathOperator{\pSym}{pSym}
\DeclareMathOperator{\Homeo}{Homeo}
\DeclareMathOperator{\pHomeo}{pHomeo}
\DeclareMathOperator{\id}{id}
\DeclareMathOperator{\Ext}{Ext}
\DeclareMathOperator{\supp}{supp}
\DeclareMathOperator{\Ran}{Ran}
\DeclareMathOperator{\Span}{Span}
\DeclareMathOperator{\e}{e}
\title{Residually Finite Partial Actions and MF Fell Bundles}
\author{Timothy Rainone}
\address{Department of Mathematics, Occidental College, Los Angeles, 90041}
\email{trainone@oxy.edu}
\subjclass[2010]{Primary: 46L05, Secondary: 46L55}
\keywords{\cstar-algebras, Fell Bundles, Dynamical Systems}
\begin{document}

\begin{abstract}
We study Blackadar and Kirchberg's matricial field (MF) property and quasidiagonality in cross-sectional \cstar-algebras constructed from Fell Bundles and, in particular, from partial \cstar-dynamical systems. In doing so we generalize Kerr and Nowak's notion of a residually finite action to partial topological dynamical systems. We look at some examples exhibiting this property including the partial Bernoulli shift which produces an MF reduced crossed product provided the group in question is exact, residually finite, and admits an MF reduced group  \cstar-algebra.
\end{abstract}

\maketitle

\section{Introduction}

There are various notions of finite-dimensional approximation in the theory of \cstar-algebras (\cite{BrownOzawaBook}). While nuclearity is expressed in terms of completely positive self maps factoring through finite-dimensional subalgebras, residual finite-dimensionality (RFD), quasidiagonality (QD), and the matricial field (MF) property (admitting norm microstates) all require external approximating maps into matrix algebras modeling the linear, multiplicative, and metric structure of the algebra. In a sense, these notions are considered more topological in nature. In this paper we are interested in such  approximation properties witnessed in \cstar-crossed products arising from partial dynamical systems, and more generally, from reduced cross-sectional \cstar-algebras that are constructed from Fell bundles over discrete groups.

The construction of a crossed product arising from a single partial automorphism of a \cstar-algebra was introduced by Exel in~\cite{ExCircleActions}. Thereafter, McClanahan defined partial \cstar-dynamical systems and their crossed products in the case where the acting group is discrete~(\cite{Mc95}). Exel then extended these constructions to the very general case of a twisted partial action of a locally compact group on a \cstar-algebra(\cite{ExTwistedPartial}). Throughout this development  several \cstar-algebras have been described as crossed products of commutative algebras by partial actions. These include AF-algebras, Bunce-Deddens algebras, and Toeplitz algebras of quasi-lattice groups (see~~\cite{ExBDpartial}, \cite{ExAFpartial},~\cite{ExLaQuPartial}). The theory of partial actions of groups on \cstar-algebras and their crossed products generalize that of \cstar-dynamical systems.  An excellent introduction to these topics can be found in~\cite{ExPAFB}. Therein, the construction of the crossed product \cstar-algebra arising from a partial \cstar-system is accomplished by means of its associated Fell bundle. A Fell bundle (or \cstar-algebraic bundle as referred to in~\cite{FellDoran} or~\cite{FellC*-bundles}) consists of a collection Banach spaces $\{B_t\}_{t\in G}$ indexed over the group $G$ whose total space $\sqcup_{t\in G}B_t$ admits multiplication and involution
with properties resembling those of a $\cstar$-algebra. Fell bundles incorporate the theory of actions, partial actions, and even twisted partial actions of groups on \cstar-algebras. To each bundle $\cB$ we can attach two \cstar-algebras; the full and reduced cross-sectional algebras denoted by $\cstarl{\cB}$, and $\cstar(\cB)$ respectively. If $\alpha$ is an action (or partial action) of $G$ on a \cstar-algebra $A$, there is a natural Fell bundle $\cA_\alpha$ and we have $\cstarl{\cA_\alpha}=A\rtimes_\lambda^\alpha G$ and $\cstar(\cA_\alpha)=A\rtimes^\alpha G$. These are studied extensively in~\cite{FellDoran} and~\cite{ExPAFB}.

The notion of amenability for Fell bundles and its relation to the nuclearity of their cross-sectional \cstar-algebras has very recently been studied. Indeed, building on Exel's approximation property (AP) for Fell bundles (\cite{ExFellAmen}), Abadie, Buss, and Ferraro show that a Fell bundle $\cB$ is amenable (has (AP)) if and only if the cross-sectional \cstar-algebra $\rC_\lambda(\cB)$ is nuclear, provided of course that the unit fiber $B_e$ is itself nuclear (\cite{AbBuFeAmenFell}). It now seems fitting, therefore, to look at various topological notions of finite-dimensional approximation in Fell bundles and their algebras, the most general of these being the matricial field property. Matricial field (MF) algebras were introduced by Blackadar and Kirchberg in~\cite{BK97}. These are stably finite \cstar-algebras constructed as generalized inductive limits of finite-dimensional algebras. The MF property is the \cstar-analogue of admitting tracial microstates, i.e., embeddability into the ultrapower $R^\omega$ of the hyperfinite II$_{1}$ factor. MF algebras are interesting in their own right but also have important connections to Voiculescu's seminal study of topological free entropy dimension~\cite{VoFreeEntropy} and with Brown-Douglas-Fillmore \emph{Ext} semigroup introduced in~\cite{BDF}. 

A natural place to begin this study is in the realm of dynamical systems. In~\cite{KerrNowak} it was shown that a continuous action $\bF_r\curvearrowright X$ of a free group on the Cantor set produces an MF reduced crossed product $\rC(X)\rtimes_\lambda\bF_r$ if and only if the action is residually finite as defined in that piece. In the classical setting residual finiteness is equivalent to the action being pseudo-nonwandering or chain recurrent in the sense of Conley (\cite{ConCR}). Residually finite actions $G\curvearrowright X$ induce quasidiagonal actions $G\curvearrowright\rC(X)$ which in turn characterize quasidiagonality in the crossed product when the acting group $G$ is amenable. In subsequent work, notions of RFD and MF actions were introduced and shown to describe those very properties in the crossed product (\cite{Rain2014},\cite{RaSc2019}). In this project we continue this work for partial dynamical systems and arrive at similar results.

We end this introduction by briefly outlining the content of this paper. In section~\ref{sec: preliminary} we introduce notation and relevant constructions that will appear thereafter. Readers familiar with the theory of Fell bundles and partial \cstar-systems may very well skip this section. Next, we define and study residually finite partial actions in section~\ref{sec: RF actions} and look at the partial Bernoulli shift. In the final section~\ref{sec: approx of Fell bundles} we define RFD, QD, and MF bundles and characterize these properties in their cross-sectional \cstar-algebras. Also, we unpack these properties in the special case of partial \cstar-systems.

The author is especially grateful to  Alcides Buss and Christopher Schafhauser for helpful discussions.

\section{Preliminaries and Notation}\label{sec: preliminary}

Throughout, $G$ will denote a discrete group with neutral element $e$. The free group on $r$ generators is written as $\bF_r$. The left-regular representation of $G$ is the unitary representation 
\[\lambda^G:G\to\bB(\ell_2(G));\quad \lambda_s^G(\xi)(t)=\xi(s^{-1}t),\ s,t\in G,\ \xi\in\ell_2(G).\]  
The reduced group \cstar-algebra of $G$; the \cstar-algebra generated by the family of unitaries $\{\lambda_s^G\}_{s\in G}$, is denoted by $\cstarl{G}$. The full group \cstar-algebra is $\cstar(G)$.

If $Z$ is any nonempty set we write $\{\epsilon_z\}_{z\in Z}$ for the canonical orthonormal basis in $\ell_2(Z)$, and for $x,y\in Z$, we write $\e_{x,y}$ for the partial isometries in $\bB(\ell_2(Z))$ defined by  $\e_{x,y}(\xi)=\ip{\xi}{\epsilon_y}\epsilon_x$.

The minimal tensor product of \cstar-algebras $A$ and $B$ is written as $A\otimes B$. Recall that a \cstar-algebra $C$ is \emph{exact} if the functor $C\otimes-$ is exact. A group $G$ is exact if and only if $\cstarl{G}$ is exact.

The algebra of $d\times d$ matrices over the complex numbers is denoted by $\bM_d$. Occasionally we will write $\mathrm{PI}(\bM_d)$ for the set of partial isometries in $\bM_d$.

Given a sequence of \cstar-algebras $(M_n)_{n\geq1}$ (usually matrix algebras in this piece), the $\ell_\infty$-product \cstar-algebra $\prod_{n\geq1}M_{n}$ contains the space
\[\bigoplus_{n\geq1}M_n:=\bigg\{(a_n)_n\in\prod_{n\geq1}M_n\mid\lim_{n\to\infty}\|a_n\|=0\bigg\}\]
as a closed, two-sided ideal. The quotient \cstar-algebra comes equipped with the canonical quotient map $\pi:\prod_{n}M_n\to\prod_{n}M_n/\oplus_{n}M_n$. Recall that
\[\|\pi\big((a_n)_n\big)\|=\limsup_{n\to\infty}\|a_n\|.\]
Given any linear space $B$ and linear map $\psi:B\to\prod_{n}M_n/\oplus_{n}M_n$ there is always a linear lift $\varphi:B\to\prod_{n}M_n$ with $\pi\circ\varphi=\psi$. If $B$ is a $\ast$-algebra we may replace $\varphi$ by $x\mapsto\frac{1}{2}(\varphi(x)+\varphi(x^*)^*)$ and assume that the lift $\varphi$ is $\ast$-linear.
Occasionally we will deal with ultraproducts. Unless stated otherwise, $\omega$ will denote a free ultrafilter on $\bN$ fixed throughout this paper. Now we may also consider the closed, two-sided ideal of $\prod_{n\geq1}M_{n}$:
\[\bigoplus_{\omega}M_n:=\bigg\{(a_n)_n\in\prod_{n\geq1}M_n\mid\lim_{n\to\omega}\|a_n\|=0\bigg\}.\]
The \emph{norm ultraproduct} of the sequence $(M_n)_{n\geq1}$ is the quotient \cstar-algebra
\[\prod_{\omega}M_n:=\prod_{n\geq1}M_n\big/\bigoplus_{\omega}M_n.\]
Here we write $\pi_\omega:\prod_{n\geq1}M_n\to\prod_{\omega}M_n$ for the quotient map. Note that
\[\pi_\omega\big((a_n)_n\big)=\lim_{\omega}\|a_n\|.\]

If $\varphi:A\to B$ is a linear map between \cstar-algebras, recall that $\varphi$ is completely positive and contractive (c.p.c.) if $\varphi\otimes\id_{\bM_n}:A\otimes\bM_n\to B\otimes\bM_n$ is positive and contractive for every $n\geq1$. If $A$ is nuclear and $\pi: P\twoheadrightarrow B$ is a quotient mapping, recall Choi and Effros' result; that any c.p.c. map $\varphi:A\to B$ admits a c.p.c. lift $\psi:A\to P$, that is, $\pi\circ\psi=\varphi$.
  
\subsection{Partial Dynamical Systems}
The notion of a partial crossed by a single automorphism was defined by R. Exel in~\cite{ExCircleActions}. Crossed products by partial actions of discrete groups was developed in~\cite{Mc95}. An excellent introduction to partial dynamical systems is~\cite{ExPAFB}. A \emph{partial dynamical system} consists of a set $X$, a group $G$, and a collection of subsets $\{U_t\}_{t\in G}$ of $X$ along with maps $\{\theta_t:U_{t^{-1}}\to U_t\}_{t\in G}$ satisfying:
\begin{enumerate}[(i)]
	\item $U_e=X$, and $\theta_e=\id_X$,
	\item $\theta_s\circ\theta_t\subseteq\theta_{st}$, for all $s,t\in G$.
\end{enumerate}
We may also say that $G$ \emph{partially acts} on $X$ and abbreviate by $$\theta:=\big\{\theta_t:U_{t^{-1}}\to U_t\big\}_{t\in G}.$$

What is meant by (ii) is that $\theta_{st}$ extends the partial composition $\theta_s\circ\theta_t$, namely
\[\theta_t^{-1}(U_t\cap U_{s^{-1}})=\dom(\theta_s\circ\theta_t)\subseteq\dom(\theta_{st})= U_{(st)^{-1}},\]
and for every $x\in \theta_t^{-1}(U_t\cap U_{s^{-1}})$ we have $\theta_s(\theta_t(x))=\theta_{st}(x)$.

A few facts follow (not-so-immediately) from the  definition. For every $s,t\in G$
\begin{enumerate}[(1)]
	\item $\theta_t:U_{t^{-1}}\to U_t$ is a bijection, with $\theta_t^{-1}=\theta_{t^{-1}}$,
	\item $\theta_s(U_{s^{-1}}\cap U_t)=U_s\cap U_{st}$,
	\item $\theta_{s^{-1}}\theta_s\theta_t=\theta_{s^{-1}}\theta_{st}$. 
\end{enumerate}
By (1) each $\theta_t$ is a partial symmetry of $X$, so we will occasionally denote a partial action succinctly as $\theta:G\to\pSym(X)$, where $\pSym(X)$ refers to the unital inverse semigroup of partial symmetries of $X$. If for each $t\in G$ we have $U_t=X$ we say that the partial action is \emph{global}. We thus recover a traditional action $G\to\Sym(X)$.

Given partial actions $\theta:G\to\pSym(X)$ and $\eta:G\to\pSym(Z)$ with partial symmetries $\{\theta_t:U_{t^{-1}}\to U_t\}_{t\in G}$, and  $\{\eta_t:V_{t^{-1}}\to V_t\}_{t\in G}$ respectively, a map $\rho:Z\to X$ is said to be \emph{$G$-equivariant} if for all $t\in G$
\[\rho(V_t)\subseteq U_t;\quad\rho(\eta_t(z))=\theta_t(\rho(z))\quad\forall t\in V_{t^{-1}}.\]
If, in addition, we have $\rho^{-1}(U_t)\subseteq V_t$, we shall call $\rho$ \emph{strictly equivariant}\footnote{This nomenclature is not standard but is useful for our purposes.}.

A \emph{continuous partial dynamical system} is a partial action of a group $G$ on a (LCH) topological space $X$; $\theta:=\big\{\theta_t:U_{t^{-1}}\to U_t\big\}_{t\in G}$, where each $U_t\subseteq X$ is open and each $\theta_t$ is a homeomorphism. We may say that $G$ acts continuously on $X$ via partial homeomorphisms and write $\theta:G\to\pHomeo(X)$.

Finally, a \emph{partial \cstar-dynamical system} consists of a partial action of a group $G$ on a \cstar-algebra $A$; $\alpha:=\big\{\alpha_t:D_{t^{-1}}\to D_t\big\}_{t\in G}$, where each $D_t\subseteq A$ is a closed ideal and each $\alpha_t$ is a $\ast$-isomorphism. We may say that $G$ acts on $A$ by partial automorphisms and abbreviate by $\alpha:G\to\pAut(A)$.

Given a continuous partial action $\theta:=\big\{\theta_t:U_{t^{-1}}\to U_t\big\}_{t\in G}$ of $G$ on $X$, we may dualize and obtain a partial \cstar-system $\alpha:=\big\{\alpha_t:D_{t^{-1}}\to D_t\big\}_{t\in G}$ where
\begin{equation}\label{eq: dual action}
D_t:=\rC_0(U_t);\quad  \alpha_t(f)=f\circ\theta_{t^{-1}},
\end{equation}
where, for an open set $U\subseteq X$ we identify $$\rC_0(U)=\{f\in\rC_0(X)\mid f(x)=0\ \forall x\notin U\}.$$
Conversely, if $\alpha:G\to\pAut(\rC_0(X))$ is a partial \cstar-system with partial automorphisms $\big\{\alpha_t:D_{t^{-1}}\to D_t\big\}_{t\in G}$, every ideal $D_t=\rC_0(U_t)$ for some open subset $U_t\subseteq X$, and using Gelfand duality there is a continuous partial system $\theta:G\to\pHomeo(X)$ satisfying~\eqref{eq: dual action}.

Starting with a partial \cstar-system of $G$ on $A$; $\alpha:=\big\{\alpha_t:D_{t^{-1}}\to D_t\big\}$ we may form the algebraic partial crossed product as follows: We begin with the linear space of all finite-supported sections:
\[\rC_c(G,A):=\big\{x:G\to A\mid x(t)\in D_t,\ \supp(x)<\infty\big\}\]
equipped with point-wise linear operations. Every such $x\in \rC_c(G,A)$ can be expressed uniquely as
\[x=\sum_{s\in G}a_s\delta_s;\]
where
\[a\delta_s:G\to A;\quad a\delta_s(t):=\begin{cases}
a 	& \text{if } t=s, \\
0 	& \text{if } t\neq s.
\end{cases}.\]
The space $\rC_c(G,A)$ comes equipped with a multiplication and involution satisfying:
\[(a\delta_s)(b\delta_t)=\alpha_s(\alpha_{s^{-1}}(a)b)\delta_{st},\quad (a\delta_s)^*=\alpha_{s^{-1}}(a^*)\delta_{s^{-1}}.\]
The resulting $\ast$-algebra is denoted by $A\rtimes_{\text{alg}}^\alpha G$. If $A$ has a unit $1_A$, then $1_A\delta_e$ is clearly a unit for $A\rtimes_{\text{alg}}^\alpha G$. Also, $A\hookrightarrow A\rtimes_{\text{alg}}^\alpha G$; $a\mapsto a\delta_e$ is a $\ast$-homomorphic embedding.

As the construction of crossed products for global actions is tied to group (unitary) representations, the partial crossed product involves partial group representations. A \emph{partial $\ast$-representation} of a group $G$ in a unital $\ast$-algebra $B$ is a a map \[v:G\to B;\quad t\mapsto v_t\]
satisfying: for all $s,t\in G$
\begin{enumerate}[(i)]\label{def: partial rep}
	\item $v_e=1$,
	\item $v_{t^{-1}}=v_t^*$,
	\item $v_{s^{-1}}v_sv_t=v_{s^{-1}}v_{st}$.
\end{enumerate}
Combining (ii) and (iii) we also obtain $v_sv_tv_{t^{-1}}=v_{st}v_{t^{-1}}$, and we see that each $v_t\in B$ is a partial isometry. Given such a partial $\ast$-representation $v:G\to B$ we will write $\e_t:=v_tv_t^*$ for the range projection of $v_t$. Clearly $\e_{t^{-1}}$ is the source projection of $v_t$. Also, it is shown that for $s,t\in G$,
\begin{equation}\label{eq: partial rep relations}
\e_s\e_t=\e_t\e_s,\quad v_s\e_t=\e_{st}v_s.
\end{equation}

Let $\alpha:G\to\pAut(A)$ be a partial \cstar-dynamical system with partial automorphisms $\{\alpha_t:D_{t^{-1}}\to D_t\}_{t\in G}$. A \emph{covariant representation} of $\alpha$ in a $\ast$-algebra $B$ is a pair $(\pi,v)$ where
\begin{enumerate}[(i)]
	\item $\pi:A\to B$ is a $\ast$-homomorphism,
	\item $v:G\to B$; $t\mapsto v_t$ is a partial $\ast$-representation, and
	\item $v_t\pi(a)v_t^*=\pi(\alpha_s(a))$ for all $t\in G$ and $a\in D_{t^{-1}}$.
\end{enumerate}
We will call such a covariant representation \emph{faithful} if $\pi$ is faithful. Given a covariant representation $(\pi,v):(A,G,\alpha)\to B$ we always have
\begin{equation}\label{eq: covariant rep relations}
\e_t\pi(a)=\pi(a)\e_t\quad\forall t\in G,\ a\in D_t.
\end{equation}
We naturally get a $\ast$-homomorphism
\[\pi\rtimes v:A\rtimes_{\text{alg}}^\alpha G\too B;\quad (\pi\rtimes v)(a\delta_s)=\pi(a)v_s.\]

Following~\cite{ExPAFB} we will define the reduced crossed product \cstar-algebra $A\rtimes_{\lambda}^\alpha G$ by means of its associated Fell bundle. We will also use Fell bundles to define the full crossed product \cstar-algebra $A\rtimes^\alpha G$.

\subsection{Fell Bundles and their \cstar-algebras.} Fell bundles appear frequently in operator algebras but perhaps most apparently in the crossed product and partial crossed product construction. Fell bundles were introduced by Fell in~\cite{FellC*-bundles}, under the name C*-algebraic bundles. Detailed treatments can be found in~~\cite{FellDoran} and~\cite{ExPAFB}.

A Fell bundle over a group $G$ is a family of Banach spaces indexed over $G$ that admits a graded multiplication and involution.

\begin{definition} A \emph{Fell bundle} over a group $G$ is a collection of Banach spaces $\cB=\{B_t\}_{t\in G}$ whose disjoint union $B:=\sqcup_{t\in G} B_t$ (called the \emph{total space} of $\cB$) admits a multiplication and involution:
\[\cdot: B\times B\rightarrow B,\quad  \ast: B\rightarrow B\] 
satisfying the following properties: for all $s,t\in G$ and $a,b\in B$:
\begin{enumerate}[(i)]
\item $B_s\cdot B_t\subseteq B_{st}$,
\item multiplication is bilinear from $B_s\times B_t$ to $B_{st}$,
\item multiplication on $B$ is associative,
\item $B_t^*\subseteq B_{t^{-1}}$,
\item  involution is conjugate-linear from $B_t$ to $B_{t^{-1}}$,
\item $(ab)^*=b^*a^*$,
\item $a^{**}=a$,
\item $\|ab\|\leq\|a\|\|b\|$,
\item $\|a^*\|=\|a\|$,
\item $\|a^*a\|=\|a\|^2$,
\item $a^*a\geq 0$ in $B_e$.
\end{enumerate}
\end{definition}

Implicit in the definition of a Fell bundle is the fact that the unit fiber $B_e$ is a \cstar-algebra. This gives meaning to condition (xi). If the algebra $B_e$ is unital, we shall call the bundle \emph{unital}. Moreover, a Fell bundle $\cB=\{B_t\}_{t\in G}$ is called \emph{separable} if $G$ is countable and each space $B_t$ is separable.

We will occasionally make use of the following fact.

\begin{fact}
	If $\cB=\{B_t\}_{t\in G}$ is a Fell bundle over a group $G$, and $(u_i)_i$ is any approximate identity for $B_e$, then
	\[\|bu_i-b\|_{B_t}\too0\quad \forall b\in B_t.\]
\end{fact}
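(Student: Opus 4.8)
The plan is to push everything onto the unit fibre and use the \cstar-identity there, exactly as one proves the analogous statement $\|ba - b\|\to 0$ inside a single \cstar-algebra. First I would normalize the approximate identity: since any approximate identity for a \cstar-algebra may be taken to consist of positive contractions, I will assume each $u_i$ satisfies $u_i = u_i^*$, $\|u_i\|\le 1$, and $\|a u_i - a\|\to 0$ for all $a\in B_e$. Fix $b\in B_t$. Because $u_i\in B_e$, axiom (i) gives $bu_i\in B_t\cdot B_e\subseteq B_t$, so $bu_i - b$ again lies in the single fibre $B_t$; this is what lets me invoke the \cstar-axiom (x) on the total space in the form $\|bu_i-b\|^2 = \|(bu_i-b)^*(bu_i-b)\|$.

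The crux of the argument is then a short computation in $B_e$. Put $c:=b^*b$, which by (iv) and (i) lies in $B_{t^{-1}}\cdot B_t\subseteq B_e$ (and is positive by (xi), though I will not actually need positivity). Using (vi) together with $u_i^*=u_i$, I would expand the product and regroup as
\[
(bu_i-b)^*(bu_i-b) = u_i c u_i - u_i c - c u_i + c = u_i(c u_i - c) - (c u_i - c).
\]
Now $c\in B_e$ and $(u_i)_i$ is an approximate identity for the \cstar-algebra $B_e$, so $\|c u_i - c\|\to 0$; combined with $\|u_i\|\le 1$, each of the two summands on the right tends to $0$ in norm. Hence $\|bu_i-b\|^2\to 0$, which is the claim.

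The step I expect to require the most care is the regrouping, not the limit itself. The natural temptation is to factor the expression as $(u_i-1)(c u_i - c)$, but $B_e$ need not be unital, so the symbol $1$ is unavailable and that factorization is only formal. The honest move is to keep the expression as $u_i(c u_i - c) - (c u_i - c)$ and estimate the two genuine elements of $B_e$ termwise, which is where the contractivity (or at least uniform boundedness) of the $u_i$ is used. The two other small points to verify explicitly are that $bu_i-b$ really sits in $B_t$ so that the fibrewise \cstar-identity applies, and that taking $u_i$ self-adjoint is legitimate, so that $(bu_i)^* = u_i b^*$ and the expansion above is valid.
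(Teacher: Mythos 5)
Your proof is correct. In fact, the paper states this Fact \emph{without} proof, so there is no authorial argument to compare against; yours is the standard one that fills the gap: observe $bu_i-b\in B_t\cdot B_e\subseteq B_t$, apply the fibrewise \cstar-identity, and reduce everything to the approximate-identity property of $B_e$ applied to $c=b^*b$.

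One caveat about your opening normalization. Since the Fact quantifies over \emph{any} approximate identity, you cannot literally ``take'' the $u_i$ to be positive contractions: replacing the given net by a nicer one proves a statement about the nicer net, not the given one. Under the standard \cstar-algebraic convention an approximate identity is by definition a net of positive contractions, so nothing is actually lost; but it is worth noting that your own regrouping needs neither self-adjointness nor contractivity. Writing $(bu_i-b)^*(bu_i-b)=u_i^*(cu_i-c)-(cu_i-c)$ with $c=b^*b\in B_e$, both terms tend to zero in norm using only $\|cu_i-c\|\to 0$ and the uniform bound $\sup_i\|u_i\|<\infty$. Phrased this way, the argument is immune to which definition of approximate identity is intended, and the normalization step can be deleted entirely.
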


\subsubsection{Representations of Fell Bundles} A representation of a Fell bundle $\cB=\{B_t\}_{t\in G}$ in a $\ast$-algebra $C$ is a family of linear maps $\pi:=\{\pi_t:B_t\rightarrow C\}_{t\in G}$ satisfying: for every $s,t\in G$, $a\in B_s$, $b\in B_t$
\begin{enumerate}
	\item[(i)] $\pi_s(a)\pi_t(b)=\pi_{st}(ab)$;
	\item[(ii)] $\pi_s(a)^*=\pi_{s^{-1}}(a^*)$.
\end{enumerate}
With a slight abuse of notation, we may write $\pi:\cB\to C$ to abbreviate such a representation. If $B_e$ and $C$ are unital we will say that the representation $\pi$ is \emph{unital} provided $\pi_e:B_e\to C$ is unital. 
Moreover, we will call $\pi$ \emph{faithful} if $\pi_e$ is {faithful}\footnote{Recall that a (linear) map $\phi:A\to C$ between $\ast$-algebras is faithful if $\phi(a^*a)=0$ implies $a=0$.}.

Note that if $\{\pi_t\}_{t\in G}$ is a representation of a Fell bundle $\cB=\{B_t\}_{t\in G}$ in a $\ast$-algebra $C$, then $\pi_e:B_e\to C$ is a $\ast$-homomorphism. Consequently, representations are contractive on each section.

\begin{fact}\label{fact: reps are contractive}
If $\cB=\{B_t\}_{t\in G}$ is a Fell Bundle and  $\pi:\cB\to C$ a representation in a \cstar-algebra $C$, then each $\pi_t:B_t\to C$ is contractive. If $\pi$ is faithful, each $\pi_t$ is isometric.
\end{fact}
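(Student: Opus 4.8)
The plan is to push the whole question down to the unit fiber $B_e$, where $\pi_e$ is a genuine $\ast$-homomorphism of \cstar-algebras and the classical facts apply: any $\ast$-homomorphism between \cstar-algebras is contractive, and an injective one is isometric. The only bridge I need between an arbitrary fiber $B_t$ and $B_e$ is the \cstar-identity, which the bundle axioms guarantee holds on every fiber (axiom (x)), together with the two defining compatibilities of a representation (multiplicativity and $\ast$-preservation).

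Concretely, I would fix $t\in G$ and $a\in B_t$. Since $a^*\in B_{t^{-1}}$ and $B_{t^{-1}}B_t\subseteq B_e$, the element $a^*a$ lies in the unit fiber. Applying the \cstar-identity in $C$, then property (ii) of the representation, and finally property (i) with $s=t^{-1}$, I compute
\[\|\pi_t(a)\|^2=\|\pi_t(a)^*\pi_t(a)\|=\|\pi_{t^{-1}}(a^*)\pi_t(a)\|=\|\pi_e(a^*a)\|.\]
This single display does all the work. For contractivity, I invoke that $\pi_e$ is a $\ast$-homomorphism, hence norm-decreasing, so $\|\pi_e(a^*a)\|\le\|a^*a\|$; combining with the fiberwise \cstar-identity $\|a^*a\|=\|a\|^2$ (axiom (x)) gives $\|\pi_t(a)\|^2\le\|a\|^2$, i.e.\ $\pi_t$ is contractive.

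For the isometry claim, the essential (and only mildly delicate) point is to recognize that faithfulness of $\pi$---meaning $\pi_e(b^*b)=0\Rightarrow b=0$ per the footnote---is exactly injectivity of the $\ast$-homomorphism $\pi_e$. Once that is noted, standard \cstar-theory gives that $\pi_e$ is isometric, so $\|\pi_e(a^*a)\|=\|a^*a\|=\|a\|^2$, and the displayed chain upgrades to $\|\pi_t(a)\|=\|a\|$. I do not anticipate any real obstacle here: the argument is a one-line reduction, and the only things to keep straight are that $a^*a$ genuinely lands in $B_e$ and that ``faithful'' for $\pi_e$ coincides with injectivity before quoting the isometry of injective $\ast$-homomorphisms.
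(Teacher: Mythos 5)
Your proof is correct and is essentially the paper's own argument: the same chain $\|\pi_t(a)\|^2=\|\pi_t(a)^*\pi_t(a)\|=\|\pi_{t^{-1}}(a^*)\pi_t(a)\|=\|\pi_e(a^*a)\|$, followed by contractivity of the $\ast$-homomorphism $\pi_e$ for the inequality, and the observation that faithfulness makes $\pi_e$ injective, hence isometric, turning the inequality into an equality. No gaps.
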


\begin{proof}
	Since $\pi_e$ is contractive, given $a\in B_s$ we have
	\[\|\pi_s(a)\|^2=\|\pi_s(a)^*\pi_s(a)\|=\|\pi_{s^{-1}}(a^*)\pi_s(a)\|=\|\pi_e(a^*a)\|\leq\|a^*a\|=\|a\|^2.\]
	If $\pi$ is faithful the last inequality is an equality.
\end{proof}

We may compose a representation with a $\ast$-homomorphism as follows. Suppose $\cB=\{B_t\}_{t\in G}$ is a Fell Bundle,  and $A$ and $C$ are $\ast$-algebras. If $\lambda:\cB\to A$ is a representation and $\phi:A\to C$ is a $\ast$-homomorphism, then 
\[\phi\circ\lambda:=\big\{(\phi\circ\lambda)_t:=\phi\circ\lambda_t:B_t\to C\big\}_{t\in G}\]
is indeed a representation of $\cB$ in $C$.

Next, if $\{\pi_n:\cB\to C_n\}_{n\geq1}$ is a sequence of representations, their \emph{direct sum}
\begin{equation}\label{eq: direct sum rep}
\pi:=\oplus_{n\geq1}\pi_n:\cB\too\prod_{n\geq1}C_n;\quad\pi_s(a):=\big(\pi_{n,s}(a)\big)_{n\geq1},
\end{equation}
is also a representation. If each $C_n$ is a \cstar-algebra, Fact~\ref{fact: reps are contractive} ensures that~\eqref{eq: direct sum rep} is well-defined.

We will also need to tensor a bundle representation with the left-regular representation $\lambda^G$ of the group $G$. More precisely, if $\pi=\big\{\pi_t:B_t\to\BH\big\}_{t\in G}$ is a representation in $\BH$ for some Hilbert space $\cH$, it is easily verified that 
\begin{equation}\label{eq: pi tensor with lambda}
\pi\otimes\lambda^G:=\big\{(\pi\otimes\lambda^G)_t:B_t\to\bB(\cH\otimes_2\ell_2(G))\big\}_{t\in G};\quad (\pi\otimes\lambda^G)_t(b)=\pi_t(b)\otimes\lambda_t^G
\end{equation}
is also a representation.

\subsubsection{The left-regular representation} There is a canonical representation of a Fell bundle in the algebra of adjointable operators on the Hilbert \cstar-module $\ell_2(\cB)$ whose construction we now briefly describe. 

Fix a Fell bundle $\cB=\{B_t\}_{t\in G}$ over a group $G$, and let
\[\rC_c(\cB)=\bigg\{x:G\rightarrow\bigsqcup_{t\in G}B_t\ \big|\ \supp(x)<\infty,\ x(t)\in B_t\ \forall t\in G\bigg\}\]
denote the $\mathbb{C}$-linear space of all finitely supported sections with pointwise operations. If $r\in G$ with $b\in B_r$ we write
\[b\delta_r:G\to\bigsqcup_{t\in G} B_{t};\quad b\delta_r(s):=\begin{cases}
b 	& \text{if } s=r \\
0 	& \text{if } s\neq r.
\end{cases}.\]
Note that every $\xi\in \rC_c(\cB)$ can be written as a formal sum
$$\xi=\sum_{t\in G}b_t\delta_t$$  
where $\xi(t)=b_t\in B_t$ and only finitely many $b_t$ are non-zero. The linear space $X:=\rC_c(\cB)$ admits a right module action of the \cstar-algebra $B_e$ given by
\begin{equation}\label{action}\xi\cdot b(s)=\xi(s)b\qquad \xi\in X,\ b\in B_e,
\end{equation}
and a $B_e$-valued inner product $X\times X\longrightarrow B_e$
\begin{equation}\label{innerproduct}\langle\xi,\eta\rangle:=\sum_{t\in G}\xi(t)^*\eta(t).
\end{equation}
Note that $\xi(t)^*\eta(t)\in B_{t^{-1}}B_t\subset B_e$. In this way $X$ is an inner product $B_e$-module. Completing $X$ with respect to the norm
\[\|\xi\|_2=\|\langle\xi,\xi\rangle\|^{1/2}=\bigg\|\sum_{t\in G}\xi(t)^*\xi(t)\bigg\|^{1/2}\]
and extending the right action and inner product continuously produces a Hilbert $B_e$-module which we denote by $\ell_2(\cB)$. Definitions imply that for a vector $\xi=b\delta_t$, where $b\in B_t$, we have $\|\xi\|_2=\|b\|$. 

 The following is a useful fact.

\begin{lemma}\label{evaluation} Let $\cB=\{B_t\}_{t\in G}$ be a Fell Bundle over $G$. For each $s\in G$ the map $P_s:\ell_2(\cB)\rightarrow B_s$ given by $P_s(\xi)=\xi(s)$ is linear and contractive. 
\end{lemma}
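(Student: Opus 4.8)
The plan is to prove contractivity on the dense subspace $\rC_c(\cB)$ of finitely supported sections and then extend by continuity to the completion $\ell_2(\cB)$. Linearity of $P_s$ is immediate from the pointwise definition of the linear operations on sections, so the only content is the norm bound $\|P_s(\xi)\|_{B_s} \leq \|\xi\|_2$.

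First I would fix $\xi \in \rC_c(\cB)$ and compare the full inner product $\langle \xi, \xi \rangle = \sum_{t \in G} \xi(t)^* \xi(t)$ with the single summand $\xi(s)^* \xi(s)$. By axiom (xi) each $\xi(t)^* \xi(t)$ is positive in the unit-fibre \cstar-algebra $B_e$, so the difference
\[
\langle \xi, \xi \rangle - \xi(s)^* \xi(s) = \sum_{t \neq s} \xi(t)^* \xi(t)
\]
is a finite sum of positive elements and hence positive in $B_e$. Thus $0 \leq \xi(s)^* \xi(s) \leq \langle \xi, \xi \rangle$. Invoking monotonicity of the norm on positive elements of a \cstar-algebra (if $0 \leq a \leq b$ then $\|a\| \leq \|b\|$), together with the \cstar-identity (x) in the fibre $B_s$, I obtain
\[
\|P_s(\xi)\|^2 = \|\xi(s)\|^2 = \|\xi(s)^* \xi(s)\| \leq \|\langle \xi, \xi \rangle\| = \|\xi\|_2^2,
\]
so $\|P_s(\xi)\| \leq \|\xi\|_2$ on $\rC_c(\cB)$.

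Finally, since $\rC_c(\cB)$ is dense in $\ell_2(\cB)$ by construction and $P_s$ is a bounded (in fact contractive) linear map on this dense subspace, it extends uniquely to a contractive linear map on all of $\ell_2(\cB)$, which is the asserted $P_s$. There is no serious obstacle here; the one step requiring care is the positivity argument, where it is essential that each $\xi(t)^* \xi(t)$ lands in $B_e$ (as $B_{t^{-1}} B_t \subseteq B_e$) and is positive there, so that discarding the terms with $t \neq s$ only decreases the element in the order of $B_e$ and thus cannot increase its norm.
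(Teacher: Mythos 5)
Your proof is correct and takes essentially the same route as the paper: both hinge on the inequality $0\leq\xi(s)^*\xi(s)\leq\langle\xi,\xi\rangle$ in $B_e$, followed by monotonicity of the norm on positive elements and the \cstar-identity to get $\|\xi(s)\|^2\leq\|\langle\xi,\xi\rangle\|=\|\xi\|_2^2$. The only (harmless) difference is that you prove the bound on the dense subspace $\rC_c(\cB)$ and extend by continuity, whereas the paper asserts the inequality directly for $\xi\in\ell_2(\cB)$, implicitly using the realization of elements of the completion as sections with norm-convergent inner-product sums.
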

\begin{proof}
	Linearity is clear. Note that for $\xi\in\ell_2(\cB)$
	\[0\leq\xi(s)^*\xi(s)\leq\sum_{t\in G}\xi(t)^*\xi(t),\]
	holds in $B_e$. Taking norms we get
	\[\|P_s(\xi)\|^2=\|\xi(s)\|^2=\|\xi(s)^*\xi(s)\|\leq \bigg\|\sum_{t\in G}\xi(t)^*\xi(t)\bigg\|=\|\langle\xi,\xi\rangle\|=\|\xi\|^2.\]
\end{proof}

We write $\mathcal{L}(\ell_2(\cB))$ for the \cstar-algebra of all adjointable maps on the Hilbert module $\ell_2(\cB)$.

\begin{proposition}\label{propleftregular}
	Let $\cB=\{B_t\}_{t\in G}$ be a Fell Bundle over $G$ with its associated inner product $B_e$-module $X$ as described above. For each $s\in G$ and $b\in B_s$ define the map $\lambda_s(b):X\rightarrow X$ by
	\begin{equation}\label{leftregular}\lambda_s(b)\left(\sum_{t\in G}b_t\delta_t\right)=\sum_{t\in G}bb_t\delta_{st}.
	\end{equation}
	\begin{enumerate}
		\item[(i)] $\lambda_s(b)$ extends continuously to a bounded linear map on $\ell_2({\cB})$ satisfying
		\begin{equation}\label{leftregular2}\lambda_s(b)\xi(t)=b\xi(s^{-1}t), \quad \xi\in\ell_2(\cB), t\in G.
		\end{equation}
		\item[(ii)] $\lambda_s(b)$ is adjointable with $\lambda_s(b)^*=\lambda_{s^{-1}}(b^*)$.
		\item[(iii)] $\|\lambda_s(b)\|=\|b\|$.
	\end{enumerate}
\end{proposition}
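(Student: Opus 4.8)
The plan is to prove the three claims in their natural order, bootstrapping everything from a single positivity estimate in the unit fibre $B_e$. The crucial ingredient is the inequality: for every $\beta\in B_e$ with $\beta\geq 0$ and every $c\in B_t$ (in any fibre),
\[c^*\beta c\leq\|\beta\|\,c^*c\quad\text{in } B_e.\]
I would establish this by passing to the unitization $\widetilde{B_e}$, where $\|\beta\|1-\beta\geq 0$ and hence equals $y^*y$ for some $y\in\widetilde{B_e}$. Writing $y=\mu 1+y_0$ with $\mu\in\bC$ and $y_0\in B_e$, the element $yc:=\mu c+y_0c$ lies in $B_t$ (since $B_eB_t\subseteq B_t$), so that $c^*(\|\beta\|1-\beta)c=(yc)^*(yc)$ is positive by condition (xi) of the Fell bundle axioms; expanding the left-hand side as $\|\beta\|c^*c-c^*\beta c$ gives exactly the claim. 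I expect this to be the main obstacle: the fibres are only bimodules over the possibly non-unital algebra $B_e$, so one cannot take a square root of $\|\beta\|1-\beta$ inside $B_e$, and one must verify that ``conjugating'' an inequality by an element $c$ living in a different fibre is legitimate. Everything else is bookkeeping built on top of this estimate.

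For part (i) I would first read off the components: if $\xi=\sum_t b_t\delta_t$, then the coefficient of $\delta_r$ in $\sum_t bb_t\delta_{st}$ comes from $st=r$, so $(\lambda_s(b)\xi)(r)=b\,\xi(s^{-1}r)$, which is \eqref{leftregular2} on $X$. Applying the inequality above with $\beta=b^*b$ (so $\|\beta\|=\|b\|^2$) and summing over the finite support of $\xi$ yields
\[\langle\lambda_s(b)\xi,\lambda_s(b)\xi\rangle=\sum_t\xi(s^{-1}t)^*b^*b\,\xi(s^{-1}t)=\sum_u\xi(u)^*\,b^*b\,\xi(u)\leq\|b\|^2\sum_u\xi(u)^*\xi(u)=\|b\|^2\langle\xi,\xi\rangle.\]
Taking norms gives $\|\lambda_s(b)\xi\|_2\leq\|b\|\,\|\xi\|_2$ on the dense submodule $X$, so $\lambda_s(b)$ extends to a bounded operator on $\ell_2(\cB)$. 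The identity \eqref{leftregular2} then persists on all of $\ell_2(\cB)$: approximating $\xi$ by finitely supported sections and using that the evaluation maps $P_t$ are contractive (Lemma \ref{evaluation}) while left multiplication by $b$ is bounded lets me pass to the limit in each component.

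For part (ii) I would verify the adjoint relation on $X$ by a direct reindexing $u=s^{-1}t$: for $\xi,\eta\in X$,
\[\langle\lambda_s(b)\xi,\eta\rangle=\sum_t\big(b\,\xi(s^{-1}t)\big)^*\eta(t)=\sum_u\xi(u)^*b^*\eta(su)=\langle\xi,\lambda_{s^{-1}}(b^*)\eta\rangle,\]
using $(\lambda_{s^{-1}}(b^*)\eta)(u)=b^*\eta(su)$. Since both $\lambda_s(b)$ and $\lambda_{s^{-1}}(b^*)$ are bounded by part (i), continuity of the inner product upgrades this identity from $X$ to all of $\ell_2(\cB)$, which is precisely adjointability with $\lambda_s(b)^*=\lambda_{s^{-1}}(b^*)$. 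Finally, for part (iii), part (i) already yields $\|\lambda_s(b)\|\leq\|b\|$; for the reverse I would test against the single-term section $\xi=b^*\delta_{s^{-1}}$, for which $\|\xi\|_2=\|b^*\|=\|b\|$ while $\lambda_s(b)\xi=bb^*\delta_e$ has $\|\lambda_s(b)\xi\|_2=\|bb^*\|=\|b\|^2$. The resulting ratio is exactly $\|b\|$, so $\|\lambda_s(b)\|\geq\|b\|$ and equality holds.
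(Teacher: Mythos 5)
Your proof is correct. For parts (i) and (ii) it is essentially the paper's argument: the same estimate $\sum_u\xi(u)^*(b^*b)\xi(u)\leq\|b\|^2\sum_u\xi(u)^*\xi(u)$ gives boundedness on the dense submodule $X$, formula \eqref{leftregular2} is extended by continuity together with Lemma~\ref{evaluation}, and adjointability is the same reindexing $u=s^{-1}t$ followed by continuity of the inner product. You do two things differently, both worth noting. First, you actually prove the fibre inequality $c^*\beta c\leq\|\beta\|\,c^*c$ (for $\beta\geq0$ in $B_e$ and $c$ in an arbitrary fibre $B_t$) by factoring $\|\beta\|1-\beta=y^*y$ in the unitization $\widetilde{B_e}$ and observing that $yc\in B_t$, so that $(yc)^*(yc)\geq0$ by axiom (xi); the paper uses this inequality silently in the step $\big\|\sum_t b_t^*(b^*b)b_t\big\|\leq\big\|\sum_t\|b\|^2\,b_t^*b_t\big\|$, so your lemma supplies exactly the justification the paper omits, and your concern about the non-unitality of $B_e$ is the right one --- the unitization trick resolves it cleanly. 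Second, for the lower bound in (iii) the paper feeds $\lambda_s(b)$ the vectors $v_i\delta_e$ for an approximate identity $(v_i)_i$ of $B_e$ and uses $\|bv_i\|\to\|b\|$; you instead test the single vector $b^*\delta_{s^{-1}}$, for which the C*-identity in the bundle gives $\|\lambda_s(b)(b^*\delta_{s^{-1}})\|_2=\|bb^*\|=\|b\|^2$ while $\|b^*\delta_{s^{-1}}\|_2=\|b^*\|=\|b\|$, yielding $\|\lambda_s(b)\|\geq\|b\|$ in one stroke (for $b\neq0$; the case $b=0$ is trivial). Your version avoids approximate identities and limits altogether; the paper's version has the mild advantage of being the same device it reuses elsewhere (e.g., in Proposition~\ref{prop: lambda in 1-1}). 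Both arguments are sound.
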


\begin{proof}
	It is clear that $\lambda_s(b)$ as defined by \eqref{leftregular} is linear. Since
	\begin{align*}\bigg\|\lambda_s(b)\bigg(\sum_{t\in G}b_t\delta_t\bigg)\bigg\|^2&=\bigg\|\sum_{t\in G}bb_t\delta_{st}\bigg\|^2=\bigg\|\sum_{t\in G}(bb_t)^*bb_t\bigg\|=\bigg\|\sum_{t\in G}b_t^*(b^*b)b_t\bigg\|\\&\leq\bigg\|\sum_{t\in G}\|b\|^2b_t^*b_t\bigg\|=\|b\|^2\bigg\|\sum_{t\in G}b_t^*b_t\bigg\|=\|b\|^2\bigg\|\sum_{t\in G}b_t\delta_t\bigg\|^2
	\end{align*}
	we see that $\lambda_s(b)$ extends to a bounded operator on $\ell_2(\cB)$ with norm $\|\lambda_s(b)\|\leq\|b\|$. It easily follows that equation \eqref{leftregular2} holds for $\xi\in X$. Continuity and Lemma~\ref{evaluation} ensure that \eqref{leftregular2} holds for all $\xi\in\ell_2(\cB)$.
	
	To see (ii),
	\begin{align*}
	\langle\lambda_s(b)\xi,\eta\rangle&=\sum_{t\in G}\lambda_s(b)\xi(t)^*\eta(t)=\sum_{t\in G}(b\xi(s^{-1}t))^*\eta(t)=\sum_{t\in G}\xi(s^{-1}t)^*b^*\eta(t)\\&\stackrel{r=s^{-1}t}{=}\sum_{r\in G}\xi(r)^*b^*\eta(sr)=\sum_{r\in G}\xi(r)^*\lambda_{s^{-1}}(b^*)\eta(r)=\langle\xi, \lambda_{s^{-1}}(b^*)\eta\rangle.
	\end{align*}
	
	As for (iii), we already have that $\|\lambda_s(b)\|\leq \|b\|$ for $b\in B_s$. Let $(v_i)_i$ be an approximate identity for the \cstar-algebra $B_e$. Using the fact that $v_i\delta_e$ are in the unit ball of $\ell_2(\cB)$ we see that
	\[\|\lambda_s(b)\|\geq\|\lambda_s(b)(v_i\delta_e)\|_2=\|bv_i\delta_s\|_2=\|bv_i\|_{B_s}\rightarrow\|b\|.\]
	Therefore $\|b\|\leq\|\lambda_s(b)\|\leq \|b\|$ as claimed.
\end{proof}

\begin{proposition}\label{prop: left-reg rep}
Let $\cB=\{B_t\}_{t\in G}$ be a Fell Bundle over $G$. The collection of maps \[\lambda:=\{\lambda_t:B_t\rightarrow\mathcal{L}(\ell_2(\cB))\}_{t\in G}\]
is a representation of $\cB$ in $\mathcal{L}(\ell_2(\cB))$.
\end{proposition}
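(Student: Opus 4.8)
The plan is to check the two defining axioms of a Fell-bundle representation against the family $\lambda=\{\lambda_t\}_{t\in G}$, reusing everything already established in Proposition~\ref{propleftregular}. That proposition does most of the work for us: part~(i) shows each $\lambda_s(b)$ is a bounded operator on $\ell_2(\cB)$, part~(ii) shows it is adjointable (so indeed $\lambda_s(b)\in\cL(\ell_2(\cB))$) and records the identity $\lambda_s(b)^*=\lambda_{s^{-1}}(b^*)$, while linearity of $b\mapsto\lambda_s(b)$ on each fiber $B_s$ is immediate from formula~\eqref{leftregular} together with the bilinearity of the bundle multiplication. The adjoint identity is precisely axiom~(ii) of a representation, so the only thing left to verify is the multiplicativity axiom~(i), namely $\lambda_s(a)\lambda_t(b)=\lambda_{st}(ab)$ for $a\in B_s$ and $b\in B_t$.

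To establish multiplicativity I would argue pointwise using the evaluation formula~\eqref{leftregular2}, which Proposition~\ref{propleftregular}(i) guarantees holds for all $\xi\in\ell_2(\cB)$. First note that $ab\in B_sB_t\subseteq B_{st}$ by the grading axiom, so $\lambda_{st}(ab)$ is defined. Then for any $\xi\in\ell_2(\cB)$ and $r\in G$,
\begin{align*}
\big(\lambda_s(a)\lambda_t(b)\xi\big)(r)
&= a\,\big(\lambda_t(b)\xi\big)(s^{-1}r)
= a\,\big(b\,\xi(t^{-1}s^{-1}r)\big)\\
&= (ab)\,\xi\big((st)^{-1}r\big)
= \big(\lambda_{st}(ab)\xi\big)(r),
\end{align*}
where the third equality uses associativity of the bundle multiplication (axiom~(iii) of a Fell bundle) together with the identity $t^{-1}s^{-1}=(st)^{-1}$. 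Since this holds for every $r\in G$ and every $\xi$, the two operators coincide, which is axiom~(i).

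There is no serious obstacle here: the essential content was already extracted in Proposition~\ref{propleftregular}, and what remains is the one-line composition identity above. If one prefers to avoid invoking~\eqref{leftregular2} on all of $\ell_2(\cB)$, the same computation can instead be carried out on the dense subspace $X=\rC_c(\cB)$ directly from the defining formula~\eqref{leftregular}, and then extended by continuity since each $\lambda_s(a)$ is bounded. The only point one must keep in view throughout is the associativity of the multiplication on the total space, as that is precisely what licenses the collapse of $a\,(b\,\xi(\cdot))$ into $(ab)\,\xi(\cdot)$.
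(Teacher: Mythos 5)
Your proof is correct and follows essentially the same route as the paper: linearity and the adjoint identity $\lambda_s(b)^*=\lambda_{s^{-1}}(b^*)$ are inherited from Proposition~\ref{propleftregular}, and multiplicativity $\lambda_s(a)\lambda_t(b)=\lambda_{st}(ab)$ is verified by the identical pointwise computation via the evaluation formula~\eqref{leftregular2}. Your closing remarks (the grading axiom placing $ab\in B_{st}$, associativity, and the optional density argument) are fine but add nothing beyond what the paper's one-line calculation already contains.
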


\begin{proof}
	The maps $\lambda_t:B_t\rightarrow\mathcal{L}(\ell_2(\cB))$, $b\mapsto\lambda_t(b)$ are clearly linear. The fact that $\lambda_s(b)^*=\lambda_{s^{-1}}(b^*)$ for all $s\in G$, $b\in B_s$ was verified in Proposition~\ref{propleftregular}. Also, for any $s,t,r\in G$, $b\in B_s$, $b'\in B_t$, and $\xi\in\ell_2(\cB)$ we have
	\begin{align*}\lambda_s(b)\lambda_t(b')\xi(r)&=b\lambda_t(b')\xi(s^{-1}r)=bb'\xi(t^{-1}s^{-1}r)=bb'\xi((st)^{-1}r)\\&=\lambda_{st}(bb')\xi(r),
	\end{align*}
so $\lambda_s(b)\lambda_t(b')=\lambda_{st}(bb')$. Thus $\lambda$ is a representation. 
\end{proof}

The representation $\lambda:\cB\to\mathcal{L}(\ell_2(\cB))$ is called the \emph{left-regular} representation of $\cB$.

\subsubsection{The cross-sectional \cstar-algebras $\cstarl{\cB}$ and $\cstar(\cB)$} 
Fix a Fell bundle $\cB=\{B_t\}_{t\in G}$ over a group $G$, and consider again the $\mathbb{C}$-linear space $\rC_c(\cB)$ of all finitely supported sections. The space $\rC_c(\cB)$ also admits the structure of a $\ast$-algebra where multiplication and involution are defined by
\begin{align*}
x\cdot y(s)=\sum_{t\in G}x(t)y(t^{-1}s)&=\sum_{t\in G}x(st^{-1})y(t),\quad s\in G\\
x^*(s)&=x(s^{-1})^*,\quad s\in G.
\end{align*}
These formulas are natural as they extend the group operation; that is, for $s,t\in G$, $a\in B_s$, $b\in B_t$ we have
\begin{align*}
(a\delta_s)(b\delta_t)&=ab\delta_{st}\\
(a\delta_s)^*&=a^*\delta_{s^{-1}}.
\end{align*}

Note that there is a natural canonical representation of $\cB$ in $\rC_c(\cB)$ given by \[j:=\{j_t:B_t\rightarrow \rC_c(\cB)\}_{t\in G};\quad j_t(b)=b\delta_t.\]

Akin to the theory of unitary representations of groups and their induced group algebra homomorphisms, representations of Fell bundles are in one-to-one correspondence with $\ast$-homomorphisms of the $\ast$-algebra $\rC_c(\cB)$. 

\begin{proposition}\label{prop: induced reps}
Let $\cB=\{B_t\}_{t\in G}$ be a Fell bundle over a group $G$, and let $\{\pi_t\}_{t\in G}$ be a representation of $\cB$ in a $\ast$-algebra $C$. There is an induced $\ast$-homomorphism $\pi: \rC_c(\cB)\rightarrow C$
given by 
\begin{equation}\label{homo}\pi(x)=\sum_{t\in G}\pi_t(x(t)).
\end{equation}
Conversely, if $C$ is a $\ast$-algebra and $\pi: \rC_c(\cB)\rightarrow C$ a $\ast$-homomorphism,  there is a representation $\{\pi_t:B_t\rightarrow C\}_{t\in G}$ of the Fell bundle $\cB$  in $C$ given by
\begin{equation}\label{rep}\pi_t(b)=\pi(b\delta_t),\quad b\in B_t
\end{equation}
and whose induced $\ast$-homomorphism recovers $\pi$.
\begin{displaymath}
\xymatrix{
	B_t\ar[d]_{j_t} \ar[r]^{\pi_t} &C \\
	\rC_c(\cB)\ar[ur]_{\pi} & }
\end{displaymath}
\end{proposition}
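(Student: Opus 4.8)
The plan is to treat the two constructions separately and then verify that they are mutually inverse. For the forward direction, I would first observe that since every $x\in\rC_c(\cB)$ is finitely supported, the sum in~\eqref{homo} is finite, so $\pi(x)=\sum_{t}\pi_t(x(t))$ is a well-defined element of $C$; linearity of $\pi$ is then immediate from the linearity of each $\pi_t$ together with the pointwise linear structure on $\rC_c(\cB)$. The substantive content is that $\pi$ is multiplicative and $\ast$-preserving.

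For multiplicativity I would compute $\pi(x\cdot y)$ directly from the convolution formula. Writing $(x\cdot y)(s)=\sum_{t}x(t)y(t^{-1}s)$ and noting that $x(t)\in B_t$, $y(t^{-1}s)\in B_{t^{-1}s}$ gives $x(t)y(t^{-1}s)\in B_s$, so the key move is to apply representation property (i) in the form $\pi_t(x(t))\,\pi_{t^{-1}s}(y(t^{-1}s))=\pi_s\bigl(x(t)y(t^{-1}s)\bigr)$. Summing over $t$ and $s$ and then reindexing the resulting double sum via $r=t^{-1}s$ (so that for each fixed $t$ the index $r$ ranges over all of $G$ as $s$ does) turns $\sum_{s,t}\pi_t(x(t))\pi_{t^{-1}s}(y(t^{-1}s))$ into $\sum_{t,r}\pi_t(x(t))\pi_r(y(r))=\pi(x)\pi(y)$. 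This reindexing, legitimate precisely because all sums are finite, is the one spot that requires attention, so I expect it to be the main (if modest) obstacle. For the $\ast$-identity I would use $x^*(s)=x(s^{-1})^*$ and property (ii) in the form $\pi_s\bigl(x(s^{-1})^*\bigr)=\pi_{s^{-1}}(x(s^{-1}))^*$, then pull the adjoint outside the finite sum and reindex $u=s^{-1}$ to recover $\pi(x)^*$.

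For the converse, given a $\ast$-homomorphism $\pi:\rC_c(\cB)\to C$ I would define $\pi_t(b)=\pi(b\delta_t)$ and check the two representation axioms using only that $\pi$ is a $\ast$-homomorphism, together with the explicit product and involution formulas $(a\delta_s)(b\delta_t)=ab\delta_{st}$ and $(a\delta_s)^*=a^*\delta_{s^{-1}}$ recorded just before the statement. Thus $\pi_s(a)\pi_t(b)=\pi(a\delta_s)\pi(b\delta_t)=\pi\bigl((a\delta_s)(b\delta_t)\bigr)=\pi(ab\delta_{st})=\pi_{st}(ab)$ gives (i), and $\pi_s(a)^*=\pi\bigl((a\delta_s)^*\bigr)=\pi(a^*\delta_{s^{-1}})=\pi_{s^{-1}}(a^*)$ gives (ii); linearity of each $\pi_t$ is immediate. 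Finally, to confirm that the two passages are inverse to one another, I would note that the induced homomorphism of $\{\pi_t\}$ sends $x=\sum_t x(t)\delta_t$ to $\sum_t\pi(x(t)\delta_t)=\pi\bigl(\sum_t x(t)\delta_t\bigr)=\pi(x)$ by linearity, so the original $\pi$ is recovered exactly as asserted by the commuting diagram.
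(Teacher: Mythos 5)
Your proof is correct. The paper states this proposition without any proof (it is treated as a standard fact, cf.\ the references \cite{FellDoran} and \cite{ExPAFB}), and your verification --- well-definedness and linearity from finite support, multiplicativity via the convolution formula together with the reindexing $r=t^{-1}s$, the $\ast$-identity via property (ii) and the substitution $u=s^{-1}$, and the converse plus the recovery of $\pi$ from the formulas $(a\delta_s)(b\delta_t)=ab\delta_{st}$ and $(a\delta_s)^*=a^*\delta_{s^{-1}}$ --- is precisely the routine computation the paper leaves to the reader.
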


We may slightly abuse notation and write $\pi=\{\pi_t:B_t\to C\}_{t\in G}$ for the representation as well as for the induced $\ast$-homomorphism  $\pi:\rC_c(\cB)\to C$.

\begin{proposition}\label{prop: lambda in 1-1}
Let $\cB=\{B_t\}_{t\in G}$ be a Fell Bundle and let $\lambda:\cB\to\mathcal{L}(\ell_2(\cB))$ be the left-regular representation as in Proposition~\ref{prop: left-reg rep}. The induced $\ast$-homomorphism $\lambda:\rC_c(\cB)\rightarrow \mathcal{L}(\ell_2(\cB))$ 
\[\lambda(x)=\sum_{t\in G}\lambda_t(x(t))\]
is injective.
\end{proposition}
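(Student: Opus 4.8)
The plan is to show directly that $\lambda(x)=0$ forces $x=0$, and the strategy is to recover each coefficient $x(s)\in B_s$ by applying $\lambda(x)$ to a well-chosen test vector and then reading off the $s$-component with the evaluation map $P_s$ of Lemma~\ref{evaluation}. Since $\lambda$ is a $\ast$-homomorphism into the $\cstar$-algebra $\mathcal{L}(\ell_2(\cB))$, injectivity is equivalent to triviality of the kernel, so it suffices to treat an arbitrary finitely supported $x=\sum_{t\in G}x(t)\delta_t$.

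First I would test $\lambda(x)$ against vectors of the form $b\delta_e$ with $b\in B_e$, which lie in $\ell_2(\cB)$ with $\|b\delta_e\|_2=\|b\|$. Using the defining formula \eqref{leftregular} from Proposition~\ref{propleftregular}, a direct computation gives $\lambda_t(x(t))(b\delta_e)=x(t)b\,\delta_t$, and summing over the (finite) support yields $\lambda(x)(b\delta_e)=\sum_{t\in G}x(t)b\,\delta_t$. Assuming $\lambda(x)=0$, this section of $\ell_2(\cB)$ vanishes for every $b\in B_e$.

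Next I would extract the individual coefficients: applying the contractive evaluation map $P_s$ of Lemma~\ref{evaluation} to the section $\lambda(x)(b\delta_e)$ isolates its value at $s$, namely $P_s\big(\lambda(x)(b\delta_e)\big)=x(s)b$. Since the left-hand side is zero, I conclude that $x(s)b=0$ in $B_s$ for every $s\in G$ and every $b\in B_e$.

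Finally, to pass from annihilation against all of $B_e$ to the vanishing of $x(s)$ itself, I would insert an approximate identity $(v_i)_i$ for the $\cstar$-algebra $B_e$. Then $x(s)v_i=0$ for every $i$, while the fact recorded earlier that $\|bv_i-b\|_{B_t}\to 0$ for $b\in B_t$ gives $x(s)v_i\to x(s)$ in $B_s$. Hence $x(s)=0$ for each $s\in G$, so $x=0$ and $\lambda$ is injective. The only genuinely delicate point is this last step: one must know that right multiplication by an approximate identity of $B_e$ recovers elements of the fibre $B_s$, which is exactly the content of that fact; everything else is a formal manipulation of the module structure on $\rC_c(\cB)$.
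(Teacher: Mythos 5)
Your proposal is correct and follows essentially the same route as the paper: the paper also applies $\lambda(x)$ to the vectors $v_i\delta_e$ coming from an approximate identity of $B_e$, reads off each coordinate $x(t)v_i=0$ of the resulting section, and concludes $x(t)=0$ from $x(t)v_i\to x(t)$. Your only variation is cosmetic — testing first against general $b\delta_e$ and invoking the evaluation maps $P_s$ of Lemma~\ref{evaluation} explicitly before specializing $b=v_i$ — which just makes explicit a step the paper leaves implicit.
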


\begin{proof}
Suppose $x\in \rC_c(\cB)$ and suppose $\lambda(x)=0$ in $\mathcal{L}(\ell_2(\cB))$. Let $(v_i)_i$ be an approximate identity for $B_e$. Then
\[0=\lambda(x)(v_i\delta_e)=\sum_{t\in G}\lambda_t(x(t))(v_i\delta_e)=\sum_{t\in G}(x(t)v_i)\delta_t\]
as a vector in $\ell_2(\cB)$. It follows that $x(t)v_i=0$ for all $t$ and for all $i$. Fixing a $t$ we see $0=x(t)v_i\rightarrow x(t)$. So $x(t)=0$ for all $t$ whence $x=0$.
\end{proof}

Thanks to Proposition~\ref{prop: lambda in 1-1}, given a Fell bundle $\cB$ we may now define the \emph{reduced norm} on $\rC_c(\cB)$ given by
\[\|x\|_{\lambda}=\|\lambda(x)\|\quad x\in \rC_c(\cB).\]
This is indeed a \cstar-norm and the completion: 
\[\cstar_{\lambda}(\cB):=\overline{\rC_c(\cB)}^{\|\cdot\|_{\lambda}}\]
is the \emph{reduced cross-sectional \cstar-algebra} associated to $\cB$.

Note that Proposition~\ref{propleftregular} implies that the maps $j_t:B_t\rightarrow\cstar_{\lambda}(\cB)$ given by $j_t(b)=b\delta_t$ are isometric embeddings. Indeed, definitions imply that $\lambda\circ j_t=\lambda_t:B_t\rightarrow\mathcal{L}(\ell_2(\cB))$ for every $t\in G$ and so
\[\|j_t(b)\|_\lambda=\|\lambda\circ j_t(b)\|=\|\lambda_t(b)\|=\|b\|.\]
In particular, $B_e$ can be viewed as a \cstar-subalgebra of $\cstar_\lambda(\cB)$ via the embedding $j_e$.

There is also a \cstar-algebra arising from a bundle that is universal with respect to its representations.

\begin{proposition}\label{prop: reps are bounded}
If $\pi=\{\pi_t\}_{t\in G}$ is a representation of a Fell Bundle $\cB=\{B_t\}_{t\in G}$ in a \cstar-algebra $C$, then for all $x\in \rC_c(\cB)$ we have
\[\|\pi(x)\|\leq\sum_{t\in G}\|x(t)\|.\]
\end{proposition}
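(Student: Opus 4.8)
The plan is to reduce the statement to the triangle inequality together with the contractivity of each fiber map $\pi_t$, which was already established in Fact~\ref{fact: reps are contractive}. The starting point is the explicit formula for the induced $\ast$-homomorphism from Proposition~\ref{prop: induced reps}: for $x\in\rC_c(\cB)$ we have
\[
\pi(x)=\sum_{t\in G}\pi_t(x(t)).
\]
Since $x$ is finitely supported, this is a \emph{finite} sum in the \cstar-algebra $C$, so there are no convergence subtleties to address and the ordinary triangle inequality for the norm on $C$ applies directly.

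First I would invoke the triangle inequality to pass the norm inside the (finite) sum, obtaining
\[
\|\pi(x)\|=\bigg\|\sum_{t\in G}\pi_t(x(t))\bigg\|\leq\sum_{t\in G}\|\pi_t(x(t))\|.
\]
Then I would apply Fact~\ref{fact: reps are contractive}, which guarantees that each $\pi_t:B_t\to C$ is contractive, so that $\|\pi_t(x(t))\|\leq\|x(t)\|$ for every $t$. Substituting this estimate termwise yields
\[
\|\pi(x)\|\leq\sum_{t\in G}\|\pi_t(x(t))\|\leq\sum_{t\in G}\|x(t)\|,
\]
which is precisely the claimed bound.

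There is essentially no hard step here; the proposition is a direct consequence of fiberwise contractivity. The only point meriting a word of care is the observation that the defining sum for $\pi(x)$ has finitely many nonzero terms, so that the triangle inequality is legitimate and no summability argument is needed. I would therefore present this as a short, self-contained computation rather than a multi-stage proof, taking Fact~\ref{fact: reps are contractive} as the single nontrivial input.
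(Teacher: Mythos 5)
Your proof is correct and follows exactly the paper's argument: the paper's proof is the one-line "apply Proposition~\ref{prop: induced reps}, linearity, and Fact~\ref{fact: reps are contractive}," which is precisely the expansion $\pi(x)=\sum_{t\in G}\pi_t(x(t))$, the triangle inequality on the finite sum, and fiberwise contractivity that you spell out. No differences to report.
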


\begin{proof}
Simply apply Proposition~\ref{prop: induced reps}, linearity, and Fact~\ref{fact: reps are contractive}.
\end{proof}

Given a Fell bundle $\cB$, Propositions~\ref{prop: reps are bounded} and~\ref{prop: lambda in 1-1} ensure that 
\[\|x\|_{\mathrm{u}}=\sup\bigg\{\|\pi(x)\|\ |\ \pi:\cB\rightarrow C\quad\text{any representation}\bigg\},\]
is a well-defined \cstar-norm on $\rC_c(\cB)$, referred to as \emph{universal norm}. We also can define the \emph{full cross-sectional \cstar-algebra} as the completion
\[\cstar(\cB):=\overline{\rC_c(\cB)}^{\|\cdot\|_{\mathrm{u}}}.\]
Moreover, $\cstar(\cB)$ enjoys a universal property: given any \cstar-algebra $C$ and representation $\{\pi_t:B_t\to C\}_{t\in G}$, there is a unique $\ast$-homomorphism $\pi:\cstar(\cB)\to C$ satisfying $\pi\circ j_t=\pi_t$. Consequently, there is a canonical quotient map 
\[\pi_{\lambda}:\cstar(\cB)\rightarrow\cstar_{\lambda}(\cB).\]

\subsubsection{Partial crossed-product \cstar-algebras}
Given a partial \cstar-dynamical system $\big(A,G, \alpha=\big\{\alpha_t:D_{t^{-1}}\to D_t\big\}_{t\in G}\big)$ we naturally obtain a Fell bundle. Indeed, for each $s\in G$ we may consider the Banach space
\[A_s:=\big\{(a,s)\mid a\in D_s\ \big\}=D_s\times\{s\}\subseteq A\times G\]
with linear operations and norm:
\begin{itemize}
	\item $(a,s)+(b,s)=(a+b,s)$, 
	\item $z(a,s)=(za,s)$, 
	\item $\|(a,s)\|:=\|a\|$, 
\end{itemize}
where $a,b\in A$, $z\in\mathbb{C}$. Multiplication and involution are then defined as:
\begin{itemize}
	\item $(a,s)(b,t)=(\alpha_s(\alpha_{s^{-1}}(a)b),st)$, 
	\item $(a,s)^*=(\alpha_{s^{-1}}(a^*),s^{-1})$,
\end{itemize}
where $a,b\in B$, and $s,t\in G$. We will usually write $\cA_\alpha=\big\{A_s\big\}_{s\in G}$ to denote such a bundle arising from the action $\alpha$. 

It is easily verified that we have a $\ast$-isomorphism of $\ast$-algebras
\begin{equation}\label{eq: *-iso between crossed product and bundle}
A\rtimes_{\mathrm{alg}}^{\alpha}G\too\rC_c(\cA_\alpha);\quad a\delta_s\mapsto(a,s)\delta_s.
\end{equation}
The \emph{reduced crossed product} and \emph{full crossed product} \cstar-algebras are then defined as
\[A\rtimes_{\lambda}^{\alpha}G:=\cstarl{\cA_\alpha},\quad A\rtimes_{\lambda}G:=\cstar(\cA_\alpha).\]

\subsubsection{The expectation and Fell's absorption}

Here we recall a few facts about the canonical faithful expectation \[\mathbb{E}:\cstarl{\cB}\rightarrow B_e\] 
and prove a uniqueness result.

The following can be found in the literature.
\begin{proposition}\label{expectation} Let $\cB=\{B_t\}_{t\in G}$ be a Fell bundle over a group $G$. For every $s\in G$ there is a contractive linear map
	$\bE_s:\cstar_{\lambda}(\cB)\to B_s$ satisfying 
	\begin{enumerate}[(i)]
		\item $\mathbb{E}_s(x)=x(s)$ for every $x\in \rC_c(\cB)$,
		\item $\bE_s\circ j_s=\id_{B_s}$.
	\end{enumerate}
Moreover, $\bE_e:\cstar_{\lambda}(\cB)\to B_e$ is faithful.
\end{proposition}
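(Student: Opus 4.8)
The plan is to construct each $\bE_s$ by first defining it on the dense $\ast$-subalgebra $\rC_c(\cB)\subseteq\cstarl{\cB}$ through the evaluation formula $\bE_s(x)=x(s)$, which forces (i), then proving contractivity for $\|\cdot\|_\lambda$ and extending by continuity. Linearity is clear. To obtain the norm estimate I would fix a positive, contractive approximate identity $(u_i)_i$ for $B_e$ and pass through the left-regular representation: using $\lambda_r(b)\eta(q)=b\,\eta(r^{-1}q)$ one computes $\big(\lambda(x)(u_i\delta_e)\big)(q)=x(q)u_i$ for every $q$, so that $P_s\big(\lambda(x)(u_i\delta_e)\big)=x(s)u_i$, where $P_s$ is the contractive evaluation map of Lemma~\ref{evaluation}. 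Combining the contractivity of $P_s$ with $\|u_i\delta_e\|_2=\|u_i\|\le 1$ then gives
\[ \|x(s)u_i\|=\big\|P_s\big(\lambda(x)(u_i\delta_e)\big)\big\|\le\big\|\lambda(x)(u_i\delta_e)\big\|_2\le\|\lambda(x)\|\,\|u_i\|\le\|x\|_\lambda, \]
and letting $i\to\infty$ (so $x(s)u_i\to x(s)$) yields $\|x(s)\|\le\|x\|_\lambda$. Thus $\bE_s$ extends to a contractive linear map on $\cstarl{\cB}$; property (i) holds on the dense subalgebra by construction, and (ii) is the instance $\bE_s(j_s(b))=\bE_s(b\delta_s)=b$.

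For the faithfulness of $\bE_e$ the crux is the identity
\[ \ip{b\delta_t}{S(b\delta_t)}=b^*\,\bE_e(S)\,b\qquad (t\in G,\ b\in B_t,\ S\in\cstarl{\cB}). \]
I would prove this first for $S=\lambda(y)$, $y\in\rC_c(\cB)$: the same evaluation computation gives $\big(\lambda(y)(b\delta_t)\big)(q)=y(qt^{-1})b$, whence $P_t\big(S(b\delta_t)\big)=y(e)b=\bE_e(S)b$, and since $\ip{b\delta_t}{\eta}=b^*\eta(t)=b^*P_t(\eta)$ the identity follows on $\rC_c(\cB)$. Both sides are norm-continuous in $S$ — the left by Cauchy--Schwarz ($\le\|b\|^2\|S\|$), the right because $\bE_e$ is contractive — so the identity persists on all of $\cstarl{\cB}$ by density. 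Specializing to $S=T^*T$ and using adjointability of the operators gives
\[ \|T(b\delta_t)\|_2^2=\big\|\ip{T(b\delta_t)}{T(b\delta_t)}\big\|=\big\|\,b^*\,\bE_e(T^*T)\,b\,\big\|. \]

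Faithfulness then falls out: if $\bE_e(T^*T)=0$ then $T(b\delta_t)=0$ for every $t\in G$ and $b\in B_t$; as these vectors span the dense subspace $\rC_c(\cB)$ of $\ell_2(\cB)$, the operator on $\ell_2(\cB)$ associated with $T$ is zero, and because $\|\cdot\|_\lambda$ is precisely the operator norm (Proposition~\ref{prop: lambda in 1-1}) we get $T=0$. The only real care needed is in the two density/continuity extensions — confirming that the evaluation identities established on $\rC_c(\cB)$ survive passage to the completion — but each reduces to the contractivity estimates already in hand (Lemma~\ref{evaluation}, contractivity of $\bE_e$) together with Cauchy--Schwarz for the $B_e$-valued inner product; choosing the approximate identity self-adjoint and contractive is what makes the cross terms behave as claimed.
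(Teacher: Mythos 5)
Your proposal is correct and follows essentially the same route as the paper's own argument: contractivity of $\bE_s$ via the estimate $\|P_s(\lambda(x)(u_i\delta_e))\|\le\|x\|_\lambda$ with an approximate identity $(u_i)_i$ of $B_e$, and faithfulness via the identity $\ip{b\delta_t}{\lambda(x)(b\delta_t)}=b^*\bE_e(x)b$ extended by continuity, then density of the vectors $b\delta_t$ in $\ell_2(\cB)$. The only cosmetic difference is that the paper defines the map as the limit of the auxiliary maps $T_{s,v_i}(x)=P_s(\lambda(x)(v_i\delta_e))$ and then observes it equals evaluation at $s$, whereas you define $\bE_s$ by evaluation directly and use the same computation for the norm bound.
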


If we consider the spaces $B_s$ embedded in $\cstar_{\lambda}(\cB)$, then Proposition~\ref{expectation} (iii) essentially says that $\bE_s^2=\bE_s$. Thus $\bE:=\bE_e:\cstar_\lambda(\cB)\rightarrow B_e$ is a linear, contractive, faithful, and idempotent map onto the \cstar-algebra $B_e$. Such a map is our conditional expectation.

We wrap up this preliminary section by establishing Fell's absorption principle using a useful lemma.

\begin{lemma}\label{lem: factorlemma} Let $A, B, C$, and $D$ be C*-algebras. Suppose $\phi:A\rightarrow B$ and $\pi:A\rightarrow D$ are $\ast$-homomorphims with $\pi$ a quotient mapping. Suppose further that $f:B\rightarrow C$ and $g:D\rightarrow C$ are maps satisfying $g\circ\pi=f\circ\phi$. If $f$ is faithful then there is a $\ast$-homomorphism $\varphi:D\rightarrow B$ such that the following diagram commutes. 
\[\begin{tikzcd}
A \arrow[r, "\phi"]	\arrow[rd, "\pi"']	& B\arrow[r, "f"]&	C	\\
&	D \arrow[red,u,dashed, "\exists\varphi"]	\arrow[ru, "g"']	&
\end{tikzcd}
\]
If, moreover, $g$ is faithful, then $\varphi$ is an embedding.

\end{lemma}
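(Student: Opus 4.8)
The plan is to construct $\varphi$ by factoring $\phi$ through the quotient $\pi$, so that the entire argument reduces to the single kernel containment $\ker\pi\subseteq\ker\phi$. Granting this, the map is essentially forced: since $\pi$ is a quotient map it is surjective, so every $d\in D$ can be written $d=\pi(a)$, and I would define $\varphi(d):=\phi(a)$. The containment of kernels makes this independent of the choice of preimage $a$, and the universal property of the quotient $D\cong A/\ker\pi$ then promotes $\varphi:D\to B$ to a genuine $\ast$-homomorphism satisfying $\varphi\circ\pi=\phi$ by construction.

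The crux, and the only place the faithfulness of $f$ is used, is verifying $\ker\pi\subseteq\ker\phi$. Given $a\in\ker\pi$ I would first note $\pi(a^*a)=\pi(a)^*\pi(a)=0$, and then apply the standing relation $g\circ\pi=f\circ\phi$ to the element $a^*a$:
\[
f\big(\phi(a)^*\phi(a)\big)=f\big(\phi(a^*a)\big)=(f\circ\phi)(a^*a)=(g\circ\pi)(a^*a)=g(0)=0,
\]
using that $\phi$ is multiplicative and $\ast$-preserving and that $g$, being linear, sends $0$ to $0$. Faithfulness of $f$, applied to $b=\phi(a)$, then forces $\phi(a)=0$, which is exactly $a\in\ker\phi$.

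Once $\varphi$ exists, the remaining commutativity $f\circ\varphi=g$ should fall out purely from the surjectivity of $\pi$: for $d=\pi(a)$ one computes $f(\varphi(d))=f(\phi(a))=(f\circ\phi)(a)=(g\circ\pi)(a)=g(d)$. For the final clause I would add the hypothesis that $g$ is faithful and show $\ker\varphi=\{0\}$: if $\varphi(d)=0$, then $\varphi(d^*d)=\varphi(d)^*\varphi(d)=0$, so $g(d^*d)=f(\varphi(d^*d))=f(0)=0$ by the relation $f\circ\varphi=g$ just established, and faithfulness of $g$ yields $d=0$. Since an injective $\ast$-homomorphism between \cstar-algebras is automatically isometric, this makes $\varphi$ an embedding. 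I do not anticipate any serious obstacle beyond correctly isolating the kernel containment as the load-bearing step; everything else is the formal machinery of quotients combined with the surjectivity of $\pi$.
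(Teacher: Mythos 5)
Your proposal is correct and follows essentially the same route as the paper's own proof: the load-bearing step in both is the kernel containment $\ker\pi\subseteq\ker\phi$, obtained by applying faithfulness of $f$ to $f\bigl(\phi(a)^*\phi(a)\bigr)=g\bigl(\pi(a^*a)\bigr)=0$, after which $\varphi$ is defined by factoring through the quotient, $f\circ\varphi=g$ follows from surjectivity of $\pi$, and faithfulness of $g$ gives injectivity of $\varphi$ exactly as you argue. (Both your write-up and the paper's tacitly use that $f$ and $g$ vanish at $0$, i.e.\ that they are at least linear, which is how the lemma is applied later anyway.)
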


\begin{proof}
Since $f$ is faithful we see that $\ker(\pi)\subseteq\ker(\phi)$. Indeed;
\begin{align*}\pi(a)&=0\Rightarrow\pi(a^*a)=0\Rightarrow g\pi(a^*a)=0\Rightarrow f\phi(a^*a)=0\\
&\Rightarrow f(\phi(a)^*\phi(a))=0\Rightarrow\phi(a)=0
\end{align*}
We may therefore define $\varphi: D\rightarrow B$ by $\varphi(\pi(a)):=\phi(a)$, for $a\in A$. Clearly $\varphi$ is a $\ast$-homomorphism satisfying $\varphi\circ\pi=\phi$. Also, since $\pi$ is onto, a simple diagram chase shows that $f\circ\varphi=g$.

Assume, moreover, that $g$ is faithful. For $d$ in $D$,
\begin{align*}\varphi(d)=0&\Rightarrow \varphi(d^*)\varphi(d)=0\Rightarrow f(\varphi(d)^*\varphi(d))=0\Rightarrow f\varphi(d^*d)=0\\&\Rightarrow g(d^*d)=0\Rightarrow d=0.
\end{align*}
\end{proof}

\begin{proposition}\label{prop: Fell absorption}
Let $\pi:=\{\pi_t:B_t\to\BH\}_{t\in G}$ be a representation of a Fell bundle in $\BH$ for some Hilbert space $\cH$. There is a $\ast$-homomorphism
\[\varphi_\pi:\cstarl{\cB}\too\bB(\cH\otimes_2\ell_2(G));\quad \varphi_\pi(a\delta_s)=\pi_s(a)\otimes\lambda_s^G.\]
If $\pi$ is faithful, so is $\varphi_\pi$.
\end{proposition}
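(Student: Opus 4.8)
The plan is to first build the desired map on the \emph{full} cross-sectional algebra via the universal property, and then push it down through the regular quotient $\pi_\lambda\colon\cstar(\cB)\twoheadrightarrow\cstarl{\cB}$ using the factorization Lemma~\ref{lem: factorlemma}. By~\eqref{eq: pi tensor with lambda} the family $\pi\otimes\lambda^G=\{b\mapsto\pi_t(b)\otimes\lambda_t^G\}_{t\in G}$ is a genuine representation of $\cB$ in the \cstar-algebra $\bB(\cH\otimes_2\ell_2(G))$, so the universal property of $\cstar(\cB)$ produces a $\ast$-homomorphism $\Phi\colon\cstar(\cB)\to\bB(\cH\otimes_2\ell_2(G))$ with $\Phi(a\delta_s)=\pi_s(a)\otimes\lambda_s^G$. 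The entire content is that $\Phi$ annihilates $\ker\pi_\lambda$, so that it factors as $\Phi=\varphi_\pi\circ\pi_\lambda$ for some $\varphi_\pi$ on $\cstarl{\cB}$.

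To feed Lemma~\ref{lem: factorlemma} I would take $A=\cstar(\cB)$, $D=\cstarl{\cB}$ with quotient $\pi_\lambda$, and $B=\overline{\Phi(\cstar(\cB))}$, the \cstar-subalgebra of $\bB(\cH\otimes_2\ell_2(G))$ generated by the operators $\pi_s(a)\otimes\lambda_s^G$, regarding $\phi=\Phi$ as corestricted to $B$. For the common target set $C=\BH$ and introduce the isometry $V\colon\cH\to\cH\otimes_2\ell_2(G)$, $Vh=h\otimes\epsilon_e$, the compression $f\colon B\to\BH$, $f(T)=V^*TV$, and the map $g=\pi_e\circ\bE\colon\cstarl{\cB}\to\BH$, where $\bE$ is the faithful expectation of Proposition~\ref{expectation}. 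A direct computation on $\rC_c(\cB)$ shows that both $f\circ\Phi$ and $g\circ\pi_\lambda$ send $x$ to $\pi_e(x(e))$, since the compression $V^*(\pi_s(a)\otimes\lambda_s^G)V$ vanishes unless $s=e$; continuity then upgrades this to the identity $g\circ\pi_\lambda=f\circ\Phi$ demanded by the lemma.

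The crux, and the step where passing to the \emph{reduced} norm is essential, is the faithfulness of the compression $f$ on $B$. Here I would exploit the right-regular representation: for $r\in G$ let $\rho_r^G\in\bB(\ell_2(G))$ act by $\rho_r^G\epsilon_t=\epsilon_{tr^{-1}}$, and note that $\lambda^G$ and $\rho^G$ commute, so each $1\otimes\rho_r^G$ commutes with every generator $\pi_s(a)\otimes\lambda_s^G$ and hence with all of $B$. Consequently, if $T\in B$ satisfies $f(T^*T)=V^*T^*TV=0$, i.e.\ $T(h\otimes\epsilon_e)=0$ for all $h\in\cH$, then for arbitrary $r$ the vector $h\otimes\epsilon_r=(1\otimes\rho_{r^{-1}}^G)(h\otimes\epsilon_e)$ also lies in $\ker T$, because $T$ commutes with $1\otimes\rho_{r^{-1}}^G$; since $\{h\otimes\epsilon_r\}$ is total in $\cH\otimes_2\ell_2(G)$ we get $T=0$, establishing that $f$ is faithful. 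I expect this commuting right-regular trick to be the only nonroutine point of the argument.

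With $f$ faithful, Lemma~\ref{lem: factorlemma} yields a $\ast$-homomorphism $\varphi_\pi\colon\cstarl{\cB}\to B\subseteq\bB(\cH\otimes_2\ell_2(G))$ with $\varphi_\pi\circ\pi_\lambda=\Phi$, that is $\varphi_\pi(a\delta_s)=\pi_s(a)\otimes\lambda_s^G$, as claimed. For the final assertion, if $\pi$ is faithful then $\pi_e$ is faithful by definition, and since $\bE$ is faithful the composite $g=\pi_e\circ\bE$ is faithful as well; the ``moreover'' clause of Lemma~\ref{lem: factorlemma} then forces $\varphi_\pi$ to be an isometric embedding.
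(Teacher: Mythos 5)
Your proof is correct, and its skeleton is the same as the paper's: represent $\cB$ via $\pi\otimes\lambda^G$, use universality to get $\Phi$ on $\cstar(\cB)$, and then factor $\Phi$ through $\pi_\lambda$ by feeding Lemma~\ref{lem: factorlemma} a faithful map out of the codomain and a map out of $\cstarl{\cB}$ built from the expectation $\bE$. Where you genuinely diverge is in the choice of that faithful map. The paper takes $C=\bB(\cH\otimes_2\ell_2(G))$, $f=\bE_D$ the block-diagonal conditional expectation $\bE_D(x)=\sum_{t}P_txP_t$ (SOT), whose faithfulness on \emph{all} of $\bB(\cH\otimes_2\ell_2(G))$ is quoted as well known, and $g(x)=\pi_e(\bE(x))\otimes I$. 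You instead compress to a single corner, $f(T)=V^*TV$ with $Vh=h\otimes\epsilon_e$, and $g=\pi_e\circ\bE$. Since a single-corner compression is certainly not faithful on the whole of $\bB(\cH\otimes_2\ell_2(G))$, your corestriction of $\Phi$ to the image algebra $B$ is essential, and your right-regular commutant trick --- $1\otimes\rho_r^G$ commutes with every $\pi_s(a)\otimes\lambda_s^G$, hence with $B$, so $T(\cH\otimes\epsilon_e)=0$ propagates to $T(\cH\otimes\epsilon_r)=0$ for all $r$ --- is exactly what restores faithfulness of $f$ on $B$. This is the classical argument used for the canonical expectation on reduced crossed products, and it makes your proof slightly more self-contained (no appeal to the standard faithfulness of an SOT-convergent block-diagonal expectation), at the cost of the extra bookkeeping with $B$ and $\rho^G$; the paper's choice avoids that bookkeeping entirely by using all the corners at once. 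Your handling of the faithful case (both $\pi_e$ and $\bE$ faithful, hence $g$ faithful, hence the ``moreover'' clause applies) matches the paper's.
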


\begin{proof}
As defined in~\eqref{eq: pi tensor with lambda}, we may tensor the representation $\pi$ with the left-regular representation of $G$ to yield the representation $\pi\otimes\lambda^G:\cB\to\bB(\cH\otimes_2\ell_2(G))$, and by universality obtain the $\ast$-homomorphism
\[\pi\otimes\lambda^G:\cstar(\cB)\to\bB(\cH\otimes_2\ell_2(G));\quad b\delta_t\mapsto\pi_t(b)\otimes\lambda_t^G.\]

Next, for $t\in G$ consider the closed subspaces $M_t:=\cH\otimes_2\Span(\epsilon_t)$ of $\cH\otimes_2\ell_2(G)$ along with their respective orthogonal projections $P_t=I\otimes e_{t,t}$. Clearly we have a Hilbert internal direct sum
\begin{equation}\label{eq: block diag decomp}
\cH\otimes_2\ell_2(G)=\bigoplus_{t\in G}M_t.
\end{equation}
It is well-known that
\[\bE_D:\bB(\cH\otimes_2\ell_2(G))\too\bB(\cH\otimes_2\ell_2(G));\quad \bE_D(x)=\sum_{t\in G}P_txP_t\quad\text{(SOT)}\]
is a faithful conditional expectation onto $D$; the \cstar-subalgebra of $\bB(\cH\otimes_2\ell_2(G))$ consisting of all operators which are block-diagonal with respect to the decomposition~\eqref{eq: block diag decomp}.

Also, it is fairly clear that \[\phi:\cstarl{\cB}\to\bB(\cH\otimes_2\ell_2(G));\quad\phi(x)=\pi_e(\bE(x))\otimes I\]
is linear and positive map. Moreover, we claim that $\bE_D\circ\pi\otimes\lambda^G=\phi\circ\pi_\lambda$, where $\pi_\lambda:\cstar(\cB)\to\cstarl{\cB}$ is the canonical quotient mapping. 
\[\begin{tikzcd}
\cstar(\cB) \arrow[r, "\pi\otimes\lambda^G"]	\arrow[rd, "\pi_\lambda"']	&\bB(\cH\otimes_2\ell_2(G)) \arrow[r, "\mathbb{E}_D"]&	\bB(\cH\otimes_2\ell_2(G))	\\
&	\cstar_{\lambda}(\cB) \arrow[red,u,dashed, "\exists\varphi_\pi"]	\arrow[ru, "\phi"']	&
\end{tikzcd}
\]
Indeed, for $x=\sum_{s\in G}b_s\delta_s$ in $\rC_c(\cB)$,
\begin{align*}
\bE_D\circ(\pi\otimes\lambda^G)(x)&=\bE_D\circ(\pi\otimes\lambda^G)\bigg(\sum_{s\in G}b_s\delta_s\bigg)=\bE_D\bigg(\sum_{s\in G}\pi_s(b_s)\otimes\lambda_s^G\bigg)=\pi_e(b_e)\otimes I\\&=\pi_e(\bE(x))\otimes I=\phi(x)=\phi\circ\pi_\lambda(x).
\end{align*}
Our claim follows by continuity.

By lemma~\ref{lem: factorlemma} there is now a $\ast$-homomorphism $\varphi_\pi$ as in the above diagram making the diagram commute. Note that if $\pi$ is faithful, so is $\phi$, so the lemma ensures that $\varphi_\pi$ is faithful as well.
\end{proof}

\begin{corollary}\label{cor: Fell cor}
Let $\pi:=\{\pi_t:B_t\to M\}_{t\in G}$ be a representation of a Fell bundle $\cB$ in a \cstar-algebra $M$. There is a $\ast$-homomorphism
\[\varphi:\cstarl{\cB}\to M\otimes\cstarl{G};\quad a\delta_s\mapsto\pi_s(a)\otimes\lambda_s^G.\]
If $\pi$ is faithful, so is $\varphi$.
\end{corollary}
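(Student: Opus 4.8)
The plan is to reduce to the already-proved concrete case, Proposition~\ref{prop: Fell absorption}, by faithfully representing the abstract $\cstar$-algebra $M$ on a Hilbert space and then recognizing the resulting absorption homomorphism as taking values inside the minimal tensor product $M\otimes\cstarl{G}$.

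First I would fix a faithful, nondegenerate representation $\sigma:M\hookrightarrow\BH$ for some Hilbert space $\cH$. Composing yields the family $\sigma\circ\pi=\{\sigma\circ\pi_t:B_t\to\BH\}_{t\in G}$, which is a representation of $\cB$ in $\BH$ by the composition principle noted after Fact~\ref{fact: reps are contractive}; moreover $\sigma\circ\pi$ is faithful exactly when $\pi$ is, since $\sigma$ is injective on $B_e$. Applying Proposition~\ref{prop: Fell absorption} to $\sigma\circ\pi$ then produces a $\ast$-homomorphism
\[\varphi_{\sigma\circ\pi}:\cstarl{\cB}\too\bB(\cH\otimes_2\ell_2(G));\quad a\delta_s\mapsto\sigma(\pi_s(a))\otimes\lambda_s^G,\]
which is faithful whenever $\pi$ is faithful.

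Next I would invoke the defining property of the spatial tensor norm. Since $\sigma$ is a faithful representation of $M$ and $\lambda^G:\cstarl{G}\to\bB(\ell_2(G))$ is the (faithful, by the very definition of the reduced group $\cstar$-algebra) regular representation, the map $\Sigma:=\sigma\otimes\lambda^G$ is a faithful, hence isometric, $\ast$-homomorphism $M\otimes\cstarl{G}\hookrightarrow\bB(\cH\otimes_2\ell_2(G))$ with $\Sigma(m\otimes\lambda_s^G)=\sigma(m)\otimes\lambda_s^G$. On the dense $\ast$-subalgebra $\rC_c(\cB)$ one computes $\varphi_{\sigma\circ\pi}(b_s\delta_s)=\sigma(\pi_s(b_s))\otimes\lambda_s^G=\Sigma(\pi_s(b_s)\otimes\lambda_s^G)$, so the image of $\varphi_{\sigma\circ\pi}$ on $\rC_c(\cB)$ lies in $\IM(\Sigma)$; as $\IM(\Sigma)$ is closed (being the image of a $\cstar$-algebra under an isometric $\ast$-homomorphism), continuity forces $\IM(\varphi_{\sigma\circ\pi})\subseteq\IM(\Sigma)$. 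I may therefore set $\varphi:=\Sigma^{-1}\circ\varphi_{\sigma\circ\pi}$, a $\ast$-homomorphism $\cstarl{\cB}\to M\otimes\cstarl{G}$ satisfying $\varphi(a\delta_s)=\pi_s(a)\otimes\lambda_s^G$ as desired. If $\pi$ is faithful then $\varphi_{\sigma\circ\pi}$ is injective and $\Sigma^{-1}$ is injective on $\IM(\Sigma)$, so $\varphi$ is injective, i.e.\ faithful.

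The only substantive point beyond bookkeeping is the appeal to the universal property of the minimal tensor product, namely that tensoring the faithful representations $\sigma$ and $\lambda^G$ gives a faithful representation of $M\otimes\cstarl{G}$; everything else is a formal consequence of Proposition~\ref{prop: Fell absorption} together with the fact that faithful $\ast$-homomorphisms of $\cstar$-algebras are isometric with closed range. The step to watch is verifying $\IM(\varphi_{\sigma\circ\pi})\subseteq\IM(\Sigma)$ before inverting $\Sigma$, which is precisely where density of $\rC_c(\cB)$ and continuity are used.
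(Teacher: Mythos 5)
Your proof is correct and follows essentially the same route as the paper: embed $M$ faithfully in $\BH$, apply Proposition~\ref{prop: Fell absorption}, and observe that the image of the dense subalgebra $\rC_c(\cB)$ lands in (the spatial copy of) $M\otimes\cstarl{G}$, so the whole range does. The only difference is cosmetic — you make explicit, via the isometric embedding $\Sigma=\sigma\otimes\lambda^G$, the identification of $M\otimes\cstarl{G}$ with its image in $\bB(\cH\otimes_2\ell_2(G))$, which the paper's proof leaves implicit by writing $M\subseteq\BH$.
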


\begin{proof}
We may consider $M\subseteq\BH$ for some Hilbert space $\cH$. Apply Proposition~\ref{prop: Fell absorption} and obtain $\varphi_\pi:\cstarl{B}\to\bB(\cH\otimes_2\ell_2(G))$ with $\varphi_\pi\circ\pi_\lambda=\pi\otimes\lambda^G$. Since $\varphi_\pi(a\delta_s)=\pi_s(a)\otimes\lambda_s^G$, we see that  $\varphi_\pi(\rC_c(\cB))\subseteq M\otimes\cstarl{G}$, whence $\Ran(\varphi_\pi)\subseteq M\otimes\cstarl{G}$.
\end{proof}

\section{Residually Finite Partial Actions}\label{sec: RF actions}

The notion of a residually finite action was first introduced By Kerr and Nowak in~\cite{KerrNowak}. Here we generalize this idea to partial dynamical systems and establish similar results. We will also define residually finite-dimensional partial actions generalizing a similar notion from~\cite{Rain2014}.

\begin{definition}\label{def: RF action}
	Let $\theta:=\big\{\theta_t:U_{t^{-1}}\to U_t\big\}_{t\in G}$ be a continuous partial action of $G$ on a metric space $X$. We will call $\theta$ \emph{residually finite} (RF) if for every $\delta>0$ and every finite subset $F\subseteq G$, there are a finite set $Z$, a partial action of $G$ on $Z$; $\eta:=\big\{\eta_t:V_{t^{-1}}\to V_t\big\}_{t\in G}$, and a map $\rho:Z\to X$ satisfying
	\begin{enumerate}[(i)]
		\item $\rho(V_t)\subseteq U_t$, and $\rho^{-1}(U_t)\subseteq V_t$ for all $t\in F$,
		\item $d(\rho(\eta_t(z)),\theta_t(\eta(z)))<\delta$ for all $t\in F$ and $z\in V_{t^{-1}}$,
		\item $X\subseteq_\delta\rho(Z)$, that is, the range of $\rho$ is $\delta$-dense in $X$.
		
	\end{enumerate}
\end{definition}

As with global residually finite actions, residually finite partial actions admit invariant measures. Recall that if $\theta:=\big\{\theta_t:U_{t^{-1}}\to U_t\big\}_{t\in G}$ is a continuous partial action of $G$ on a topological space $X$, a Borel measure $\mu$ on $X$ is $G$-invariant if for every open $V\subseteq X$ and every $t\in G$ we have $\mu(\theta_{t}(V\cap U_{t^{-1}}))=\mu(V\cap U_{t^{-1}})$.

\begin{proposition}
Let $\theta:=\big\{\theta_t:U_{t^{-1}}\to U_t\big\}_{t\in G}$ be a residually finite continuous partial action of $G$ on a compact metric space $X$, and let $\alpha:=\big\{\alpha_t:\rC_0(U_{t^{-1}})\to\rC_0(U_t)\big\}_{t\in G}$ be the associated partial \cstar-system. Then $\rC(X)$ admits an $G$-invariant (tracial) state. Consequently, $X$ admits an invariant Borel probability measure. 
\end{proposition}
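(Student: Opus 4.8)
The plan is to build the invariant state as a weak\(^*\)-limit of states coming from the finite models furnished by residual finiteness. First observe that the word \emph{tracial} requires no work: $\rC(X)$ is abelian, so every state is automatically a trace. Next I would reformulate invariance at the level of states. Unwinding the definition, a Borel probability measure $\mu$ is $G$-invariant precisely when its associated state $\phi_\mu(f)=\int_X f\,d\mu$ satisfies $\phi_\mu(\alpha_t(f))=\phi_\mu(f)$ for every $t\in G$ and every $f\in\rC_0(U_{t^{-1}})$; indeed $\alpha_t(f)=f\circ\theta_{t^{-1}}$ is supported on $U_t$, and the change-of-variables identity $\int_{U_t}\alpha_t(f)\,d\mu=\int_{U_{t^{-1}}}f\,d\mu$ is exactly the condition $\mu(\theta_t(V\cap U_{t^{-1}}))=\mu(V\cap U_{t^{-1}})$ read against continuous $f$. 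So it suffices to produce a state $\phi$ on $\rC(X)$ with $\phi\circ\alpha_t=\phi$ on $\rC_0(U_{t^{-1}})$ for all $t$, and then invoke Riesz representation, noting that $\phi(1)=1$ forces $\mu$ to be a probability measure.

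To manufacture such a $\phi$, fix a finite symmetric $F\subseteq G$ and $\delta>0$ and let $(Z,\eta,\rho)$ be a residually finite model at level $(F,\delta)$ as in Definition~\ref{def: RF action}. I would push the normalized counting measure on $Z$ forward along $\rho$, obtaining the state $\phi_Z(f)=|Z|^{-1}\sum_{z\in Z}f(\rho(z))$ on $\rC(X)$. The point of working with a finite model is that the partial maps $\eta_t\colon V_{t^{-1}}\to V_t$ are genuine bijections, so counting measure is \emph{exactly} invariant under $\eta$; the only error is introduced by the transport $\rho$.

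The heart of the proof --- and the step I expect to be the main obstacle --- is the estimate $|\phi_Z(\alpha_t(f))-\phi_Z(f)|\le\varpi(\delta)$ for $f\in\rC_0(U_{t^{-1}})$ with $\|f\|\le1$ and $t,t^{-1}\in F$, where $\varpi$ is a modulus of continuity of the function $\alpha_t(f)$, which is globally continuous on $X$ and hence uniformly continuous since $X$ is compact. The bookkeeping runs as follows. Since $\alpha_t(f)$ is supported in $U_t$ and condition~(i) gives $\rho^{-1}(U_t)\subseteq V_t$, only indices $z\in V_t$ contribute to $\phi_Z(\alpha_t(f))$; reindexing via the bijection $z=\eta_t(w)$, $w\in V_{t^{-1}}$, turns the sum into $|Z|^{-1}\sum_{w\in V_{t^{-1}}}\alpha_t(f)(\rho(\eta_t(w)))$. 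Condition~(ii) says $\rho(\eta_t(w))$ lies within $\delta$ of $\theta_t(\rho(w))$, where $\rho(w)\in U_{t^{-1}}$ by condition~(i); the crucial trick is to estimate using uniform continuity of $\alpha_t(f)$ \emph{on all of $X$} rather than trying to control $\theta_t^{-1}$ near $\partial U_t$, which yields $|\alpha_t(f)(\rho(\eta_t(w)))-\alpha_t(f)(\theta_t(\rho(w)))|\le\varpi(\delta)$, and then $\alpha_t(f)(\theta_t(\rho(w)))=f(\rho(w))$ because $\theta_{t^{-1}}\theta_t=\id$ on $U_{t^{-1}}$. Averaging over the at most $|Z|$ terms leaves the error $\le\varpi(\delta)$. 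Finally, since $f$ is supported in $U_{t^{-1}}$ and $\rho^{-1}(U_{t^{-1}})\subseteq V_{t^{-1}}$, the remaining sum $|Z|^{-1}\sum_{w\in V_{t^{-1}}}f(\rho(w))$ equals $\phi_Z(f)$ exactly, giving the claimed estimate.

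With the estimate in hand I would pass to the limit over the net of pairs $(F,\delta)$, directed by inclusion of the $F$'s and reverse order of the $\delta$'s. Weak\(^*\)-compactness of the state space of $\rC(X)$ provides a convergent subnet with limit a state $\phi$. For each fixed $t\in G$ and $f\in\rC_0(U_{t^{-1}})$, eventually $t,t^{-1}\in F$ while $\delta\to0$, so $\varpi(\delta)\to0$ and the estimate forces $\phi(\alpha_t(f))=\phi(f)$; thus $\phi$ is $G$-invariant. Riesz representation then yields the desired $G$-invariant Borel probability measure $\mu$, and $\phi$ is the sought-after invariant (automatically tracial) state on $\rC(X)$. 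I note that the density condition~(iii) is not needed for existence; it would enter only if one wished to conclude that $\mu$ has full support.
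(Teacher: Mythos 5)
Your proof is correct and follows essentially the same route as the paper: push the normalized counting measure on the finite models forward along $\rho$ to get states, establish approximate invariance from condition (ii) of Definition~\ref{def: RF action} together with uniform continuity on the compact space $X$, and take a weak$^*$ cluster point over the net of pairs $(F,\delta)$. The only (immaterial) difference is that you reindex the $\alpha_t(f)$-sum via the bijection $\eta_t$ and invoke the modulus of continuity of $\alpha_t(f)$, whereas the paper reindexes the $f$-sum via $\eta_{t^{-1}}$ and uses the modulus of continuity of $f$ itself --- a mirror image of the same estimate.
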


\begin{proof}
	Consider the directed set $I=\big\{(F,\delta)\mid F\subseteq G \text{ finite subset, } \delta>0\big\}$, where $(F,\delta)\geq(F',\delta')$ if $F\supseteq F'$ and $\delta<\delta'$.
	Fix $(F,\delta)$ in $I$. Definition~\ref{def: RF action} then yields a finite set $Z$, a partial action of $G$ on $Z$; $\eta:=\big\{\eta_t:V_{t^{-1}}\to V_t\big\}_{t\in G}$, and a map $\rho:Z\to X$ satisfying the conditions of an RF action. Consider the state
	\[\psi:\rC(Z)\to\bC;\quad\psi(g)=\frac{1}{|Z|}\sum_{z\in Z}g(z),\]
	composed with the unital $\ast$-homomorphism $\overline{\rho}:\rC(X)\to\rC(Z)$. We thus have a state
	\[\mu_{(F,\delta)}:\rC(X)\to\bC;\quad \mu_{(F,\delta)}(f)=\frac{1}{|Z|}\sum_{z\in Z}f(\rho(z)),\]
	and a net of states $\big(\mu_{(F,\delta)}\big)_{(F,\delta)}$. We claim that for each $s\in G$ and $f\in\rC_0(U_{s^{-1}})$
	\[\big|\mu_{(F,\delta)}(\alpha_s(f))-\mu_{(F,\delta)}(f)\big|\stackrel{(F,\delta)\to\infty}{\too}0.\]
	Given $\varepsilon>0$, find $\delta_0>0$ such that
	\[x,y\in X,\quad d(x,y)<\delta_0\implies |f(x)-f(y)|<\varepsilon.\]
	Now set $F_0=\{s^{-1},e,s\}$ and fix $(F,\delta)\geq(F_0,\delta_0)$. With $Z$, $\rho$, and $\eta$ associated to the pair $(F,\delta)$ we have  $d(\theta_{s^{-1}}(\rho(z)),\rho(\eta_{s^{-1}}(z)))<\delta<\delta_0$ for each $z\in V_s$. So 
	\begin{align*}
	\big|\mu_{(F,\delta)}(\alpha_s(f))-\mu_{(F,\delta)}(f)\big|&=\bigg|\frac{1}{|Z|}\sum_{z\in Z}\alpha_s(f)(\rho(z))-\frac{1}{|Z|}\sum_{z\in Z}f(\rho(z))\bigg|\\
	&=\bigg|\frac{1}{|Z|}\sum_{z\in V_s}f(\theta_{s^{-1}}(\rho(z)))-\frac{1}{|Z|}\sum_{z\in V_{s^{-1}}}f(\rho(z))\bigg|\\
	&\leq\frac{1}{|Z|}\sum_{z\in V_s}\big|f(\theta_{s^{-1}}(\rho(z)))-f(\rho(\eta_{s^{-1}}(z)))\big|<\varepsilon.
	\end{align*}
	This proves the claim. 
	
	Now let $\mu$ be a weak*-cluster point of the net $\big(\mu_{(F,\delta)}\big)_{(F,\delta)}$. Using a subnet and an $\varepsilon/3$-argument we get that $\mu(\alpha_s(f))=\mu(f)$ for every $s\in G$ and $f\in \rC_0(U_{s^{-1}})$. Thus $\mu$ is a $G$-invariant state which gives a $G$-invariant Borel probability measure on $X$.
\end{proof}

The notion of an residually finite-dimensional action in the global setting was introduced in~\cite{Rain2014} (Definition 3.2). This notion is more restrictive than RF actions as we require precise equivariance rather than approximate equivariance. Here we adopt that idea to partial systems. 

\begin{definition}\label{def: RFD action}
	Let $\theta:=\big\{\theta_t:U_{t^{-1}}\to U_t\big\}_{t\in G}$ be a continuous partial action of $G$ on a metric space $X$. We will call $\theta$ \emph{residually finite-dimensional} (RFD) if for every $\delta>0$ there is a finite set $Z$, a partial action of $G$ on $Z$; $\eta:=\big\{\eta_t:V_{t^{-1}}\to V_t\big\}_{t\in G}$, and a map $\rho:Z\to X$ satisfying
	\begin{enumerate}[(i)]
		\item $\rho(V_t)\subseteq U_t$, and $\rho^{-1}(U_t)\subseteq V_t$ for all $t\in G$,
		\item $\rho(\eta_t(z))=\theta_t(\eta(z))$ for all $t\in G$ and $z\in V_{t^{-1}}$.
		\item $X\subseteq_\delta\rho(Z)$, that is, the range of $\rho$ is $\delta$-dense in $X$,
	\end{enumerate}
\end{definition}

Note that (i) and (ii) say that $\rho$ is strictly equivariant. We will show below that RFD actions characterize RFD partial crossed products. We end this section looking at the partial Bernoulli action.

\begin{example}\label{ex: Bernoulli shift}
	Let $G$ be a group, and set $\Pi:=\{0,1\}^G$ equipped with the product topology. If $G$ is countable, say $G=\{e=t_0, t_1, t_2,\dots\}$, then $\Pi$ is metrizable via
	\[d(x,y)=\sum_{k=1}^\infty2^{-k}|y(t_k)-x(t_k)|.\]
	For each $t\in G$, let $\pi_t:\Pi\to\{0,1\}$ denote the canonical projection: $\pi_t(x)=x(t)$ and set
	\[X_t:=\pi_t^{-1}(\{1\})=\big\{x\in\Pi\mid x(t)=1\big\}.\]
	Note that each $X_t\subseteq\Pi$ is clopen. Write $X:=X_e$.
	
	Let $\sigma:G\to\Sym(G)$ denote the canonical Cayley action. Recall that the Bernoulli action is defined as
	\[\beta^G:G\to\Homeo(\Pi);\quad t\mapsto\beta_t^G,\quad \beta_t^G(x)=x\circ\sigma_{t^{-1}}.\]
	We will suppress the decorative `$G$' when the group is understood and simply write $\beta$.
	Note that $\beta_s(X_t)=X_{st}$ for all $s,t\in G$. For each $t\in G$ we define the clopen subset of $X$:
	\[U_t:=X\cap\beta_t(X)=X_e\cap X_t=\big\{x\in\Pi\mid x(e)=x(t)=1\big\}\subseteq X.\]
	Note that
	\[\beta_t(U_{t^{-1}})=\beta_t(X_e\cap X_{t^{-1}})=\beta_t(X_e)\cap\beta_t(X_{t^{-1}})=X_t\cap X_e=U_t.\]
	We now have a continuous partial action \begin{equation}\label{eq: Bernoulli action}
	\theta:G\to\pHomeo(X);\quad \{\theta_t:U_{t^{-1}}\to U_t\}_{t\in G};\quad \theta_t(x)=\beta_t(x).
	\end{equation}
	This action is known as the \emph{Bernoulli partial action} of $G$. We will sometimes denote this partial action succinctly as $\theta^G:G\to\pHomeo(X_G)$.
	
	Now suppose $\Gamma$ is a group and $\phi:G\to\Gamma$ is a homomorphism.
	We set $\Sigma:=\{0,1\}^\Gamma$, again with the product topology. There is now a continuous action of $G$ on $\Sigma$:
	\[\mu:G\to\Homeo(\Sigma);\quad t\mapsto\mu_t;\quad\mu_t(z)=z\circ\sigma_{\phi(t)^{-1}}.\]
	Indeed, this action is just the composition $G\stackrel{\phi}{\too}\Gamma\stackrel{\beta^\Gamma}{\too}\Homeo(\Sigma)$. As above we consider the sets
	\[Z_t:=\big\{z\in\Sigma\mid z(\phi(t))=1\big\};\]
	clopen subsets of $\Sigma$. Write $Z:=Z_e$. Again we see that $\mu_s(Z_t)=Z_{st}$ for $s,t\in G$. For each $t\in G$ we define the clopen subset of $Z$:
	\[V_t=Z\cap\mu_t(Z)=Z_e\cap Z_t=\big\{z\in\Sigma\mid z(e)=z(\phi(t))=1\big\}\subseteq Z.\]
	We again see that $\mu_t(V_{t^{-1}})=V_t$, so
	we obtain a continuous partial action
	\[\eta:G\to\pHomeo(Z);\quad \{\eta_t:V_{t^{-1}}\to V_t\}_{t\in G};\quad \eta_t(x)=\mu_t(x).\]
	Note that the map
	\[\rho:Z\to X;\quad\rho(z)=z\circ\phi\]
	is well-defined and strictly  $G$-equivariant. Indeed,
	$\rho(z)(e)=z\circ\phi(e)=z(e)=1$, so $\rho(z)\in X$. If $z\in V_t$, then $\rho(z)(t)=z(\phi(t))=1$, so $\rho(z)\in U_t$. Thus $\rho(V_t)\subseteq U_t$. Now if $z\in V_{t^{-1}}$,
	\[\rho(\eta_t(z))=\eta_t(z)\circ\phi=z\circ\sigma_{\phi(t)^{-1}}\circ\phi;\quad s\mapsto z(\phi(t)^{-1}\phi(s))=z(\phi(t^{-1}s)),\]
	whereas
	\[\theta_t(\rho(z))=\rho(z)\circ\sigma_{t^{-1}}=z\circ\phi\circ\sigma_{t^{-1}};\quad s\mapsto z(\phi(t^{-1}s)),\]
	so $\rho(\eta_t(z))=\theta_t(\rho(z))$.	This gives the equivariance. Moreover, we also have $\rho^{-1}(U_t)\subseteq V_t$. For if $z\in\rho^{-1}(U_t)$, then $\rho(z)\in U_t$, so 
	\[z(\phi(t))=z\circ\phi(t)=\rho(z)(t)=1,\]
	which implies $z\in V_t$.

	\begin{claim}
		If $G$ is a countable and residually finite group, then the partial Bernoulli action~\eqref{eq: Bernoulli action} is RFD.
	\end{claim}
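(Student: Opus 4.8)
The plan is to reduce the claim to the construction already carried out earlier in this very example, fed with an appropriate finite quotient of $G$. Recall that for \emph{any} homomorphism $\phi:G\to\Gamma$ the example produced a partial action $\eta=\{\eta_t:V_{t^{-1}}\to V_t\}_{t\in G}$ of $G$ on $Z=Z_e\subseteq\{0,1\}^\Gamma$ together with a \emph{strictly} $G$-equivariant map $\rho:Z\to X$, $\rho(z)=z\circ\phi$. Thus conditions (i) and (ii) of Definition~\ref{def: RFD action} hold for free, and if $\Gamma$ is finite then $\Sigma=\{0,1\}^\Gamma$ is finite, hence so is $Z$. The entire content of the claim therefore collapses to arranging, for each $\delta>0$, a finite quotient $\phi$ for which the $\delta$-density condition (iii) is met; this is exactly where residual finiteness enters.

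First I would fix $\delta>0$ and choose $N\in\bN$ with $\sum_{k>N}2^{-k}=2^{-N}<\delta$, so that any two points of $\Pi$ agreeing on the coordinates $e,t_1,\dots,t_N$ lie within $\delta$ in the metric $d$. Next, invoking residual finiteness of $G$, I would select a finite group $\Gamma$ and a homomorphism $\phi:G\to\Gamma$ that is \emph{injective} on the finite set $\{e,t_1,\dots,t_N\}$ (separating finitely many elements in a finite quotient is precisely what residual finiteness guarantees). Feeding this $\phi$ into the example yields a finite set $Z$, a partial action $\eta$ of $G$ on $Z$, and a strictly equivariant $\rho:Z\to X$, so (i) and (ii) are immediate.

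The remaining point, which is the heart of the argument, is the $\delta$-density (iii). Given $x\in X$, so $x(e)=1$, I would build a preimage explicitly: using injectivity of $\phi$ on $\{e,t_1,\dots,t_N\}$, the values $\phi(t_0),\dots,\phi(t_N)$ are distinct, so one may define $z\in\{0,1\}^\Gamma$ by $z(\phi(t_k)):=x(t_k)$ for $0\le k\le N$ and $z:=0$ on the rest of $\Gamma$. Since $\phi(e)$ is the identity of $\Gamma$ and $x(e)=1$, we get $z(e)=1$, whence $z\in Z_e=Z$. Then $\rho(z)(t_k)=z(\phi(t_k))=x(t_k)$ for $0\le k\le N$, so $\rho(z)$ agrees with $x$ on $\{e,t_1,\dots,t_N\}$ and therefore $d(\rho(z),x)\le 2^{-N}<\delta$; moreover $\rho(z)(e)=1$ confirms $\rho(z)\in X$. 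This gives $X\subseteq_\delta\rho(Z)$ and completes the verification.

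I expect no genuine obstacle beyond bookkeeping the density estimate: the only essential use of residual finiteness is the injectivity of $\phi$ on the finitely many coordinates that control $d$ up to $\delta$, and every other ingredient (the partial action $\eta$, the strict equivariance of $\rho$, and the finiteness of $Z$) is inherited verbatim from the construction preceding the claim.
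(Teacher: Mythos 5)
Your proposal is correct and takes essentially the same route as the paper's own proof: the same choice of $N$ controlling the metric tail, the same appeal to residual finiteness to obtain $\phi:G\to\Gamma$ injective on $\{t_0,\dots,t_N\}$, the same reuse of the preceding construction for conditions (i)--(ii), and the same explicit point $z$ (the paper's $z_x$) supported on $\phi(\{t_0,\dots,t_N\})$ to verify $\delta$-density.
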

	\begin{proof}
		Let $\delta>0$ and enumerate $G=\big\{e=t_0,t_1,t_2,\dots\big\}$. Choose $N$ so large so that
		\[\sum_{k>N}2^{-k}<\delta.\]
		Since $G$ is residually finite there is a finite group $\Gamma$, and a homomorphism $\phi:G\to\Gamma$ with
		\[\phi(t_k)\neq e\quad\forall k=1,\dots,N, \text{ and }\ \phi(t_i)\neq\phi(t_j)\quad \forall i\neq j\in\{0,\dots, N\}.\]		
		
		Now consider $\Sigma$, $Z$, $\mu$, and $\eta$ as in our above discussion. We need only verify (ii) of Definition~\ref{def: RFD action}. To that end, given $x\in X$, define $z_x\in\Sigma$ as follows:
		\[z_x(\gamma)=\left\{
		\begin{array}{ll}
		x(t_k) & \quad \gamma=\phi(t_k), \text{ for } k=0,\dots,N, \\
		0 & \quad \gamma\notin\phi(\{t_0,\dots, t_N\})
		\end{array}
		\right.\]
		Note that $z_x(e)=x(e)=1$ since $t_0=e$, and $\phi(t_0)=\phi(e)=e$. Thus $z_x\in Z$. Now for $k=0,1,\dots,N$ we have
		\[\rho(z_x)(t_k)=z_x(\phi(t_k))=x(t_k),\]
		so
		\[d(\rho(z_x),x)=\sum_{k=0}^\infty2^{-k}|\rho(z_x)(t_k)-x(t_k)|=\sum_{k>N}2^{-k}|\rho(z_x)(t_k)-x(t_k)|\leq\sum_{k>N}2^{-k}<\delta.\]
		Thus $\theta$ is RFD.
	\end{proof}

\end{example}

\section{Approximation of Fell Bundles and their \cstar-algebras}\label{sec: approx of Fell bundles}

In this section we characterize  residual-finite-dimensionality, quasidiagonality, and the MF property in reduced cross-sectional \cstar-algebras constructed from Fell bundles over discrete groups. As these properties are all expressed in terms of external approximating maps into matrix algebras, it seems natural that the analogous properties for Fell bundles should have the flavor of approximate representations into such algebras.

\subsection{Residually Finite-Dimensional Bundles}
We begin with the residual finite-dimensional property. Recall that a \cstar-algebra $A$ is \emph{residually finite-dimensional} (RFD) if for every $\varepsilon>0$ and every finite subset $\Omega\subseteq A$, there is a $d\in\bN$ and a $\ast$-homomorphism $\varphi:A\to\bM_d$ with
\[\|\varphi(a)\|\geq\|a\|-\varepsilon,\quad\forall a\in \Omega.\]
In fact, it suffices to consider finite subsets $\Omega\subseteq A_0$, where $A_0\subseteq A$ is a dense $\ast$-subalgebra of $A$.
If $A$ is separable, $A$ is RFD if and only if there is a natural sequence $(k_n)_{n\geq1}$ and a sequence of $\ast$-homomorphisms $(\varphi_n:A\to\bM_{k_n})_{n\geq1}$ satisfying:
\begin{equation}\label{eqn: RFD}
\|\varphi_n(a)\|\stackrel{n\to\infty}{\too}\|a\|\quad\forall a\in A.
\end{equation}
Again,~\eqref{eqn: RFD} need only hold for all $a$ in some dense $\ast$-subalgebra $A_0\subseteq A$. Equivalently, a separable $A$ is RFD if and only if there is an embedding
\[A\hookrightarrow\prod_{n\geq1}\bM_{k_n}\]
for some natural sequence $(k_n)_{n\geq1}$.

Here is the relevant RFD property for Fell bundles.

\begin{definition}\label{def: RFD bundle}
A Fell bundle $\cB=\{B_t\}_{t\in G}$
is said to be \emph{Residually Finite-Dimensional} (RFD) if for every $\varepsilon>0$ and every finite subset of the total space $\Omega\subseteq B$, there is a $d\in\bN$ and a representation $\pi:\cB\to\bM_d$ with
\[\|\pi_{t}(b)\|\geq\|b\|-\varepsilon,\quad\forall b\in \Omega\cap B_t.\]
\end{definition}

It seems more natural to allow finite subsets of the total space, but note that it is enough to consider finite subsets of the unit fiber algebra $B_e$. Indeed, if $\Omega\subseteq B$ is a finite subset of the total space, then $\Omega'=\big\{b^*b\mid b\in\Omega\big\}\subseteq B_e$ is finite, and if a representation $\pi:\cB\to\bM_d$ is approximately isometric within $\varepsilon$ on $\Omega'$, then
\[\|\pi_t(b)\|^2=\|\pi_t(b)^*\pi_t(b)\|=\|\pi_{t^{-1}}(b^*)\pi_t(b)\|=\|\pi_e(b^*b)\|\geq\|b^*b\|-\varepsilon=\|b\|^2-\varepsilon.\]

In the separable case we see that $\cB$ is RFD if and only if there is a natural sequence $(k_n)_{n\geq1}$ and a sequence of representations $(\pi_n:\cB\to\bM_{k_n})_{n\geq1}$ satisfying
\[\|\pi_{n,t}(b)\|\stackrel{n\to\infty}{\too}\|b\|\quad \forall t\in G,\ b\in B_t.\]

\begin{proposition}\label{prop: RFD bundle}
Let $\cB=\{B_t\}_{t\in G}$ be a Fell bundle.
\begin{enumerate}[(1)]
	\item If $\cstarl{\cB}$ is RFD, then $\cB$ is RFD.
	\item Assume $\cB$ is separable. If $\cB$ and $\cstarl{G}$ are RFD, then so is $\cstarl{\cB}$.
\end{enumerate}
\end{proposition}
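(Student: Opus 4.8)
The plan is to prove (1) by pulling the approximating homomorphisms back through the canonical representation, and (2) by feeding a faithful finite-dimensional representation of $\cB$ into Fell's absorption principle and then absorbing the RFD structure of $\cstarl{G}$.

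For (1), recall that $j=\{j_t:B_t\to\cstarl{\cB}\}_{t\in G}$, $j_t(b)=b\delta_t$, is a representation with each $j_t$ isometric. Given a finite $\Omega\subseteq B$ and $\varepsilon>0$, I would apply the RFD property of $\cstarl{\cB}$ to the finite set $\{b\delta_t\mid b\in\Omega\cap B_t,\ t\in G\}\subseteq\cstarl{\cB}$ to obtain a $\ast$-homomorphism $\varphi:\cstarl{\cB}\to\bM_d$ with $\|\varphi(b\delta_t)\|\geq\|b\delta_t\|-\varepsilon=\|b\|-\varepsilon$. Since the composition of a representation with a $\ast$-homomorphism is again a representation, $\pi:=\varphi\circ j$ is a representation of $\cB$ into $\bM_d$, and $\|\pi_t(b)\|=\|\varphi(b\delta_t)\|\geq\|b\|-\varepsilon$ for all $b\in\Omega\cap B_t$. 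Hence $\cB$ is RFD.

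For (2), since $\cB$ is separable and RFD there is a sequence of representations $\pi_n:\cB\to\bM_{k_n}$ with $\|\pi_{n,t}(b)\|\to\|b\|$; by~\eqref{eq: direct sum rep} and Fact~\ref{fact: reps are contractive} their direct sum $\pi=\oplus_n\pi_n:\cB\to M:=\prod_n\bM_{k_n}$ is a faithful representation. Applying Corollary~\ref{cor: Fell cor} to $\pi$ (and to each $\pi_n$) produces a faithful, hence isometric, $\ast$-homomorphism $\varphi:\cstarl{\cB}\to M\otimes\cstarl{G}$, $a\delta_s\mapsto\pi_s(a)\otimes\lambda_s^G$, together with its coordinate compressions $\varphi_n:\cstarl{\cB}\to\bM_{k_n}\otimes\cstarl{G}$. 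Representing $M$ block-diagonally shows that for $x\in\rC_c(\cB)$, whose image $\varphi(x)$ lies in the algebraic tensor product, one has $\|x\|_\lambda=\|\varphi(x)\|=\sup_n\|\varphi_n(x)\|$. Next I bring in the RFD structure of $\cstarl{G}$: write $\Psi=\oplus_m\psi_m:\cstarl{G}\hookrightarrow\prod_m\bM_{l_m}$ for an isometric embedding with $\psi_m:\cstarl{G}\to\bM_{l_m}$. As each $\bM_{k_n}$ is finite-dimensional, $\id_{\bM_{k_n}}\otimes\Psi$ remains isometric and $\bM_{k_n}\otimes\prod_m\bM_{l_m}=\prod_m\bM_{k_nl_m}$, so $\sup_m\|(\id_{\bM_{k_n}}\otimes\psi_m)(y)\|=\|y\|$ for every $y\in\bM_{k_n}\otimes\cstarl{G}$. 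Setting $\Phi_{n,m}:=(\id_{\bM_{k_n}}\otimes\psi_m)\circ\varphi_n:\cstarl{\cB}\to\bM_{k_nl_m}$ then gives $\sup_{n,m}\|\Phi_{n,m}(x)\|=\sup_n\|\varphi_n(x)\|=\|x\|_\lambda$ for all $x\in\rC_c(\cB)$. Re-indexing the countable family $\{\Phi_{n,m}\}$ as a single sequence yields a $\ast$-homomorphism $\cstarl{\cB}\to\prod_N\bM_{d_N}$ that is isometric on the dense $\ast$-subalgebra $\rC_c(\cB)$, hence isometric throughout, i.e.\ an embedding into a countable product of matrix algebras. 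By the separable characterization of RFD, $\cstarl{\cB}$ is RFD.

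The step I expect to be the main obstacle is the bookkeeping around the infinite products sitting inside the tensor products, namely the two equalities $\|x\|_\lambda=\sup_n\|\varphi_n(x)\|$ and $\|y\|=\sup_m\|(\id\otimes\psi_m)(y)\|$. Both rest on the fact that for a block-diagonal C*-algebra $\prod_k C_k$ the spatial norm of a finitely supported element of $(\prod_k C_k)\otimes D$ equals the supremum of the norms of its coordinate compressions, together with the observation that tensoring the finite-dimensional factor $\bM_{k_n}$ with the injective map $\Psi$ preserves injectivity. One must check that these hold as genuine equalities on $\rC_c(\cB)$ (equivalently, on the algebraic tensor product), so that the assembled map is isometric before passing to the completion.
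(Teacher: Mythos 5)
Your proof of (1) coincides with the paper's: both apply the RFD approximation of $\cstarl{\cB}$ to the finite set $\{b\delta_t\mid b\in\Omega\cap B_t\}$ and compose with the canonical isometric representation $j$. For (2) you also follow the paper's skeleton up to its key step: form the direct sum $\pi=\oplus_n\pi_n:\cB\to M:=\prod_n\bM_{k_n}$, observe that it is faithful, and feed it into Corollary~\ref{cor: Fell cor} to obtain an isometric embedding $\cstarl{\cB}\hookrightarrow M\otimes\cstarl{G}$. Where you diverge is the finish. The paper concludes in one line by quoting two general facts: the minimal tensor product of RFD \cstar-algebras is RFD, and RFD passes to \cstar-subalgebras. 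You instead prove exactly the special case of these facts that is needed: compress to the coordinates $\varphi_n$, tensor with finite-dimensional representations $\psi_m$ of $\cstarl{G}$, and check that the family $\Phi_{n,m}$ is jointly isometric on the dense $\ast$-subalgebra $\rC_c(\cB)$. Your two norm identities are correct: for elements of the algebraic tensor product, $\|z\|=\sup_n\|(p_n\otimes\id)(z)\|$ follows by computing the spatial norm in the block-diagonal faithful representation of $\prod_n\bM_{k_n}$ (using that the minimal norm is independent of the choice of faithful representations), and $\bM_{k}\otimes\prod_m\bM_{l_m}=\prod_m\bM_{kl_m}$ does hold, with finite-dimensionality of $\bM_k$ genuinely needed for surjectivity of the canonical map (injectivity of $\id\otimes\Psi$ holds for any injective $\Psi$ under the minimal tensor product, finite-dimensional or not). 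The trade-off: the paper's ending is shorter but leans on the unproved tensor-product theorem (and on the observation that $\prod_n\bM_{k_n}$ is RFD), while yours is longer but self-contained, and in effect reproves that theorem in the only case the paper requires, namely when one tensor factor is a countable product of matrix algebras.
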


\begin{proof}
(1): Let $\Omega\subseteq B$ be a finite subset of the total space and suppose $\varepsilon>0$. Now set
\[\Sigma:=\bigcup_{t\in G}\big\{b\delta_t\mid b\in\Omega\cap B_t\big\}\subseteq\rC_c(\cB).\]
Note that $\Sigma$ is finite. Since $\cstarl{\cB}$ is RFD there is a $d\in\bN$ and a $\ast$-homomorphism $\phi:\cstarl{\cB}\to\bM_d$ approximately isometric on $\Sigma$ within $\varepsilon$. Consider the composed representation
\[\pi:=\phi\circ j:=\big\{\phi\circ j_t:B_t\to\bM_d\big\}_{t\in G}, \]
where $j:\cB\to\rC_c(\cB)\subseteq\cstarl{\cB}$ is the canonical representation given by $j_t(b)=b\delta_t$. Then if $b\in\Omega\cap B_t$ we have
\[\|\pi_t(b)\|=\|\phi(j_t(b))\|\geq\|b\delta_t\|-\varepsilon=\|b\|-\varepsilon.\]
Thus $\cB$ is RFD.

(2): Assume $(\pi_n:\cB\to\bM_{k_n})_{n\geq1}$ is an approximating sequence of representations as in the remark proceeding Definition~\ref{def: RFD bundle}. As in~\eqref{eq: direct sum rep} we may take the direct sum representation
\[\pi:=\oplus_{n\geq1}\pi_n:\cB\too M:=\prod_{n\geq1}\bM_{k_n};\quad \pi_t(b)=(\pi_{n,t}(b))_{n\geq1}.\]
Note that $\pi$ is faithful since
\[\|\pi_e(a)\|=\|(\pi_{n,e}(a))_{n}\|=\sup_{n\geq1}\|\pi_{n,e}(a)\|=\|a\|.\]
By Corollary~\ref{cor: Fell cor} we have an injective $\ast$-homomorphism
\[\varphi_\pi:\cstarl{\cB}\too M\otimes\cstarl{G}.\]
Since $M$ and $\cstarl{G}$ are RFD, so is their minimal tensor product. Thus $\cstarl{\cB}$ is RFD.
\end{proof}

Restricting our attention to partial actions we hope to describe the RFD property dynamically. The following is a generalization of an RFD global action defined in~\cite{Rain2014}. 

\begin{definition}
Let $\alpha:G\to\pAut(A)$ be a partial \cstar-dynamical system with partial automorphisms $\big\{\alpha_t:D_{t^{-1}}\to D_t\big\}_{t\in G}$. We say that $\alpha$ is \emph{residually finite-dimensional} (RFD) if for every $\varepsilon>0$ and every finite subset $\Omega\subseteq A$ there is a $d\in\bN$ and covariant representation
\[(\varphi,v):(A,G,\alpha)\too\bM_d\]
with
\[\|\varphi(a)\|\geq \|a\|-\varepsilon,\quad\forall a\in\Omega.\]
\end{definition}

If $A$ is separable and $G$ is countable, we can see that $\alpha:G\to\pAut(A)$ is RFD if and only if there is a natural sequence $(k_n)_{n\geq1}$ and a sequence of covariant representations $(\varphi_n,v_n):(A,G,\alpha)\too\bM_{k_n}$ with
\[\|\varphi_n(a)\|\stackrel{n\to\infty}{\too}\|a\|\quad\forall a\in A.\]

\begin{proposition}\label{prop: RFD crossed product}
Let $\alpha:G\to\pAut(A)$ be a partial \cstar-dynamical system with partial automorphisms $\big\{\alpha_t:D_{t^{-1}}\to D_t\big\}_{t\in G}$, and write $\cA_\alpha=\{A_t\}_{t\in G}$ for the associated Fell bundle. 
\begin{enumerate}
	\item The action $\alpha$ is RFD if and only if the bundle $\cA_\alpha$ is RFD.
	\item If the partial reduced crossed product $A\rtimes_{\lambda}^\alpha G$ is RFD, so is the action $\alpha$.
	\item If $A$ is separable, $G$ is countable, and if $\alpha$ and $\cstarl{G}$ are RFD, then the reduced crossed product $A\rtimes_{\lambda}^\alpha G$ is RFD.
\end{enumerate}
\end{proposition}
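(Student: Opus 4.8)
The engine for all three parts is the standard bijective correspondence between covariant representations of $(A,G,\alpha)$ and representations of the associated Fell bundle $\cA_\alpha=\{A_s\}_{s\in G}$, in which the unit-fiber map $\pi_e$ is exactly the $\ast$-homomorphism $\varphi$. First I would make this precise. Given a covariant representation $(\varphi,v)\colon(A,G,\alpha)\to C$, I define $\pi_s\colon A_s\to C$ by $\pi_s(a,s)=\varphi(a)v_s$ for $a\in D_s$. A direct check---using the partial-representation identities~\eqref{eq: partial rep relations}, the covariance relation, and~\eqref{eq: covariant rep relations}---shows that $\pi=\{\pi_s\}_{s\in G}$ is a representation of $\cA_\alpha$ with $\pi_e=\varphi$ (since $v_e=1$). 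These are the very computations that show $\varphi\rtimes v$ is a $\ast$-homomorphism on $A\rtimes_{\mathrm{alg}}^\alpha G\cong\rC_c(\cA_\alpha)$, so I would simply invoke them from~\cite{ExPAFB}.

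Conversely, starting from a representation $\pi\colon\cA_\alpha\to\bM_d$, I set $\varphi:=\pi_e\colon A\to\bM_d$, a $\ast$-homomorphism, and recover the partial isometries by fixing an approximate unit $(u_i)$ for the ideal $D_s$ and putting $v_s:=\lim_i\pi_s(u_i,s)$. The main technical point is that this limit exists with the right properties. Because the target is finite dimensional, $\varphi(D_s)$ is a \cstar-subalgebra of $\bM_d$ with support projection $p_s=\lim_i\varphi(u_i)$, and one verifies that $v_s$ is a partial isometry with $v_sv_s^*=p_s$ and $v_s^*v_s=p_{s^{-1}}$, that $v$ satisfies the partial-representation axioms, and that $v_s\varphi(a)v_s^*=\varphi(\alpha_s(a))$ for $a\in D_{s^{-1}}$. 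The only subtlety is $v_e$: a priori $v_e=\lim_i\varphi(u_i)=p_e$ is the support projection of $\varphi(A)$, not $1_{\bM_d}$. I would dispose of this by compressing to the corner $p_e\bM_d p_e\cong\bM_{d'}$, in which every $\varphi(a)$ already lives and in which $v_e=p_e$ is the identity; the norm estimates are untouched by this compression. This finite-dimensional convergence-and-unitality bookkeeping is the one genuine obstacle; everything else is formal.

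With the correspondence in hand the three parts fall out. For (1), the remark following Definition~\ref{def: RFD bundle} lets me test the bundle RFD property on the unit fiber $A_e\cong A$; since the correspondence matches $\varphi=\pi_e$ and preserves the estimate $\|\varphi(a)\|=\|\pi_e(a)\|$, a covariant representation approximately isometric on a finite $\Omega\subseteq A$ yields a bundle representation approximately isometric on $\Omega$ and conversely, giving $\alpha$ RFD $\iff$ $\cA_\alpha$ RFD. Part (2) is then immediate: if $A\rtimes_\lambda^\alpha G=\cstarl{\cA_\alpha}$ is RFD, Proposition~\ref{prop: RFD bundle}(1) gives that $\cA_\alpha$ is RFD, and (1) promotes this to $\alpha$ being RFD. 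Part (3) is the reverse assembly: separability of $A$ and countability of $G$ make $\cA_\alpha$ separable, part (1) turns the RFD action $\alpha$ into an RFD bundle $\cA_\alpha$, and Proposition~\ref{prop: RFD bundle}(2)---which uses $\cstarl{G}$ RFD together with Fell's absorption via Corollary~\ref{cor: Fell cor}---delivers that $\cstarl{\cA_\alpha}=A\rtimes_\lambda^\alpha G$ is RFD.
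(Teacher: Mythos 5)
Your proposal is correct, and most of it follows the paper's own track: the forward half of (1) via $\varphi\rtimes v$ and the isomorphism $\rC_c(\cA_\alpha)\cong A\rtimes_{\mathrm{alg}}^\alpha G$, and parts (2)--(3) as immediate consequences of (1), Proposition~\ref{prop: RFD bundle}, and $\cstarl{\cA_\alpha}=A\rtimes_\lambda^\alpha G$, are exactly what the paper does. The genuine divergence is in the converse half of (1). The paper extends the induced $\ast$-homomorphism $\rho:A\rtimes_{\mathrm{alg}}^\alpha G\to\bM_d$ to the full crossed product $A\rtimes^\alpha G$ by universality and then cites McClanahan~\cite{Mc95} to disintegrate $\rho=\varphi\rtimes v$ into a covariant pair; you instead rebuild the covariant pair directly from the bundle representation, taking $\varphi:=\pi_e$ and $v_s:=\lim_i\pi_s(u_i,s)$ along an approximate unit of $D_s$. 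Your existence argument for the limit is sound: $\|\pi_s(u_i,s)-\pi_s(u_j,s)\|=\|\varphi(\alpha_{s^{-1}}(u_i)-\alpha_{s^{-1}}(u_j))\|$, and in the finite-dimensional \cstar-algebra $\varphi(D_{s^{-1}})$ the image of an approximate unit converges in norm to the unit $p_{s^{-1}}$, so the net is Cauchy; the same mechanism gives $v_s^*v_s=p_{s^{-1}}$ and $v_sv_s^*=p_s$. Your corner compression by $p_e$ to restore $v_e=1$ is also correct, since every $\pi_s(a,s)$ lies in $p_e\bM_dp_e$ and no norms change under the compression. As for what each approach buys: the paper's route is shorter, outsourcing the partial-representation axioms to a citation, but it passes through the full crossed product and quietly relies on a disintegration theorem stated for nondegenerate representations---your compression step is precisely the reduction to the essential subspace that this requires, so your write-up is actually more explicit on that point. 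The cost on your side is that the verifications you label ``one verifies''---notably $v_{s^{-1}}v_sv_t=v_{s^{-1}}v_{st}$ and the covariance identity $v_s\varphi(a)v_s^*=\varphi(\alpha_s(a))$---amount to a finite-dimensional re-proof of the cited correspondence; they are routine but not free. Both routes terminate at the same identity $\|\varphi(a)\|=\|\pi_e(a)\|\geq\|a\|-\varepsilon$ on $\Omega$, which is all that parts (1)--(3) need.
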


\begin{proof}
(1): We first assume the action $\alpha$ is RFD. Let $\Omega\subseteq A_e=A$ be a finite subset and suppose $\varepsilon>0$. Find a $d\in\bN$ and a covariant representation $(\varphi,v)$ of $(A,G,\alpha)$ in $\bM_d$ with $\varphi$ almost isometric on $\Omega$. We then obtain a $\ast$-homomorphism
\[\varphi\rtimes v:A\rtimes_{\text{alg}}^\alpha G\to\bM_d;\quad (\varphi\rtimes v)(a\delta_s)=\varphi(a)v_s.\]
Since $\rC_c(\cA_\alpha)\cong A\rtimes_{\text{alg}}^\alpha G$ as $\ast$-algebras, we thus get a $\ast$-homomorphism $\rC_c(\cA_\alpha)\to\bM_d$ which in turn yields a representation of $\cA_\alpha$ in $\bM_d$:
\[\{\pi_s:A_s\to\bM_d\}_{s\in G};\quad\pi_s(a)=\varphi(a)v_s.\]
Given $a\in\Omega$,
\[\|\pi_e(a)\|=\|\varphi(a)v_e\|=\|\varphi(a)\|\geq\|a\|-\varepsilon,\]
so $\cA_\alpha$ is RFD.

Now suppose the bundle $\cA_\alpha$ is RFD. Given a finite subset $\Omega\subseteq A$ and $\varepsilon>0$, there is a $d\in\bN$ and a representation $\pi:\cA_\alpha\to\bM_d$ with
\[\|\pi_e(a)\|\geq\|a\|-\varepsilon\quad\forall a\in\Omega.\]
We then get a $\ast$-homomorphism $\pi:\rC_c(\cA_\alpha)\to\bM_d$ with $\pi((a,s)\delta_s)=\pi_s(a)$, which induces a $\ast$-homomorphism \[\rho:A\rtimes_{\text{alg}}^\alpha G\to\bM_d;\quad\rho(a\delta_s)=\pi_s(a)\quad \forall s\in G,\ a\in D_s.\]
By the universal property $\rho$ extends to a $\ast$-homomorphism $\rho:A\rtimes^\alpha G\to\bM_d$. Using a result in~\cite{Mc95} there is a covariant representation $(\varphi,v)$ of $(A,G,\alpha)$ in $\bM_d$ with $\varphi\rtimes v=\rho$. Now if $a\in\Omega$
\[\|\varphi(a)\|=\|\rho(a\delta_e)\|=\|\pi_e(a)\|\geq\|a\|-\varepsilon.\]
Thus $\alpha$ is RFD.

(2) and (3) follow from (1), Proposition~\ref{prop: RFD bundle}, and the fact that $\cstarl{\cA_\alpha}=A\rtimes_{\lambda}^\alpha G$.
\end{proof}

Our next goal is to show that an
RFD continuous partial action $\theta:G\to\pHomeo(X)$ gives rise to a RFD  partial action at the \cstar-algebraic level $\alpha:G\to\pAut(\rC(X))$ and that the converse holds as well (see Proposition~\ref{prop: cts RFD implies RFD}). To accomplish this we shall need to unpack the structure of partial actions on finite sets and partial representations into matrix algebras. This is the content of the following two propositions.

\begin{proposition}\label{prop: finite partial systems}
Let $\eta=\big\{\eta_t:V_{t^{-1}}\to V_t\big\}_{t\in G}$ be a partial action of $G$ on a finite set $Z$, and let $\beta=\big\{\beta_t:\rC_0(V_{t^{-1}})\to\rC_0(V_t))\big\}_{t\in G}$ be the dual partial action of $G$ on $\rC(Z)$.

There is a $d\in\bN$, and a faithful covariant representation 
\[(\phi,v):(\rC(Z),G,\beta)\too\bM_d.\]
\end{proposition}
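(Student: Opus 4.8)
The plan is to take $d=|Z|$ and realize everything concretely on $\ell_2(Z)\cong\bC^d$, so that $\bB(\ell_2(Z))\cong\bM_d$. Since $Z$ is finite, $\rC(Z)$ acts faithfully by multiplication operators, so I would set
\[
\phi:\rC(Z)\to\bB(\ell_2(Z));\quad \phi(f)=\sum_{z\in Z}f(z)\,\e_{z,z},
\]
a unital, faithful $\ast$-homomorphism (faithful because any $f\neq0$ has $f(z)\neq0$ for some $z$, whence $\phi(f)\epsilon_z\neq0$). For the partial representation I would let each $v_t$ implement the bijection $\eta_t:V_{t^{-1}}\to V_t$ on basis vectors,
\[
v_t:=\sum_{z\in V_{t^{-1}}}\e_{\eta_t(z),\,z},
\]
so that $v_t\epsilon_z=\epsilon_{\eta_t(z)}$ for $z\in V_{t^{-1}}$ and $v_t\epsilon_z=0$ otherwise. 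Each $v_t$ is then a partial isometry with source projection $\sum_{z\in V_{t^{-1}}}\e_{z,z}$ and range projection $\sum_{w\in V_t}\e_{w,w}$.

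Next I would verify that $v=\{v_t\}_{t\in G}$ is a partial $\ast$-representation. Axiom (i) is immediate: since $V_e=Z$ and $\eta_e=\id_Z$, we get $v_e=I$. Axiom (ii), $v_{t^{-1}}=v_t^*$, follows by reindexing $w=\eta_t(z)$ and using fact~(1) (that $\eta_t^{-1}=\eta_{t^{-1}}$): $v_t^*=\sum_{z\in V_{t^{-1}}}\e_{z,\eta_t(z)}=\sum_{w\in V_t}\e_{\eta_{t^{-1}}(w),\,w}=v_{t^{-1}}$. The substantive point is axiom (iii), $v_{s^{-1}}v_sv_t=v_{s^{-1}}v_{st}$, which I would check on basis vectors. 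Computing $v_sv_t\epsilon_z$ and invoking $\eta_s\circ\eta_t\subseteq\eta_{st}$ shows both composites are supported exactly on $\{\epsilon_z:z\in V_{t^{-1}}\cap V_{(st)^{-1}}\}$; for the right-hand side this domain identity is precisely fact~(2) in the form $\eta_{t^{-1}}(V_t\cap V_{s^{-1}})=V_{t^{-1}}\cap V_{(st)^{-1}}$, and on that common domain both sides send $\epsilon_z\mapsto\epsilon_{\eta_t(z)}$, which is exactly the set-level relation fact~(3), $\eta_{s^{-1}}\eta_s\eta_t=\eta_{s^{-1}}\eta_{st}$, transported to $\ell_2(Z)$.

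Finally I would check covariance, $v_t\phi(f)v_t^*=\phi(\beta_t(f))$ for $f\in\rC_0(V_{t^{-1}})$, again on basis vectors: for $w\in V_t$ one computes $v_t\phi(f)v_t^*\epsilon_w=f(\eta_{t^{-1}}(w))\epsilon_w$, while $\beta_t(f)=f\circ\eta_{t^{-1}}$ is supported on $V_t$, so $\phi(\beta_t(f))\epsilon_w=f(\eta_{t^{-1}}(w))\epsilon_w$ as well, and both vanish for $w\notin V_t$. Combined with the faithfulness of $\phi$ already noted, this yields the desired faithful covariant representation $(\phi,v):(\rC(Z),G,\beta)\to\bM_{|Z|}$. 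I expect the only real obstacle to be the bookkeeping in axiom (iii): the argument hinges on matching the domains of the two composite partial isometries, and the cleanest route is to push the set-theoretic partial-action identities (1)--(3) through the dictionary ``bijection of subsets of $Z$ $\leftrightarrow$ partial isometry on $\ell_2(Z)$'' rather than manipulating matrix units directly.
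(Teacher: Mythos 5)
Your proposal is correct and is essentially the paper's own proof: the same choice $d=|Z|$, the same multiplication representation $\phi(f)=\sum_{z}f(z)\e_{z,z}$, the same partial isometries $v_t=\sum_{z\in V_{t^{-1}}}\e_{\eta_t(z),z}$, and the same verification scheme for the partial-representation axioms and covariance. The only difference is cosmetic: you check axiom (iii) and covariance on basis vectors using the set-level identities (1)--(3), whereas the paper carries out the equivalent computation by multiplying out matrix units.
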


\begin{proof}
Let $d:=|Z|$. For every $V\subseteq Z$ we see that $\rC_0(V)=\Span(\{\delta_z\mid z\in V\})$, where $\delta_z(x)=\delta_{z,x}$. Note that
\[\beta_t(\delta_z)=\delta_{\eta_t(z)}\quad\forall t\in G,\ z\in V_{t^{-1}}.\]
Also, write $\{\epsilon_z\}_{z\in Z}$ for the canonical orthonormal basis in $\ell_2(Z)$, and for each $x,y$ in $Z$, $\e_{x,y}:=\epsilon_x\otimes\overline{\epsilon_y}$ will denote the standard matrix unit. Moreover, for each $t\in G$ set
\[H_t:=\Span\big(\big\{\epsilon_t\mid z\in V_{t}\big\}\big)\subseteq\ell_2(Z),\]
and
\[v_t=\sum_{z\in V_{t^{-1}}}\e_{\eta_t(z),z}\in \bB(\ell_2(Z))\cong\bM_{d}.\]
Then $v_t$ is a partial isometry; indeed,
\[v_t^*v_t=\bigg(\sum_{z\in V_{t^{-1}}}\e_{\eta_t(z),z}\bigg)^*\bigg(\sum_{z\in V_{t^{-1}}}\e_{\eta_t(z),z}\bigg)=\sum_{z,y\in V_{t^{-1}}}\e_{y,\eta_t(y)}\e_{\eta_t(z),z}=\sum_{z\in V_{t^{-1}}}\e_{z,z},\]
is the orthogonal projection onto $H_{t^{-1}}$, and similarly $v_tv_t^*$ is the orthogonal projection onto $H_t$.
\begin{claim}
$v:G\to\bM_{d}$ is a partial $\ast$-representation.
\end{claim}

We verify the conditions described in~\ref{def: partial rep}. Indeed,
\[v_e=\sum_{z\in Z}\e_{\eta_e(z),z}=\sum_{z\in Z}\e_{z,z}=1_d,\]
and
\[v_{t}^*=\bigg(\sum_{z\in V_{t^{-1}}}\e_{\eta_t(z),z}\bigg)^*=\sum_{z\in V_{t^{-1}}}\e_{z,\eta_t(z)}\stackrel{y=\eta_t(z)}{=}\sum_{y\in V_{t}}\e_{\eta_{t^{-1}}(y),y}=v_{t^{-1}}.\]
Next,
\begin{align*}
v_s^*v_sv_t&=(v_s^*v_s)v_t=\bigg(\sum_{z\in V_{s^{-1}}}\e_{z,z}\bigg)\bigg(\sum_{y\in V_{t^{-1}}}\e_{\eta_t(y),y}\bigg)=\sum_{z\in V_{s^{-1}},\ y\in V_{t^{-1}}}\e_{z,z}\e_{\eta_t(y),y}\\
&=\sum_{z\in V_{s^{-1}},\ y\in V_{t^{-1}}}\ip{\epsilon_{\eta_t(y)}}{\epsilon_z}\e_{z,y}=\sum_{z\in V_{s^{-1}},\ y\in \eta_t^{-1}(V_{s^{-1}})}\ip{\epsilon_{\eta_t(y)}}{\epsilon_z}\e_{z,y}
\\&=\sum_{y\in \eta_t^{-1}(V_{s^{-1}})}\e_{\eta_t(y),y}
\end{align*}
On the other hand,
\begin{align*}
v_s^*v_{st}&=\bigg(\sum_{z\in V_{s^{-1}}}\mathrm{e}_{\eta_s(z),z}\bigg)^*\sum_{y\in V_{(st)^{-1}}}\e_{\eta_{st}(y),y}=\sum_{z\in V_{s^{-1}},\ y\in V_{(st)^{-1}}}\e_{z,\eta_s(z)}\e_{\eta_{st}(y),y}\\
&=\sum_{z\in V_{s^{-1}},\ y\in V_{(st)^{-1}}}\ip{\epsilon_{\eta_{st}(y)}}{\epsilon_{\eta_s(z)}}\e_{z,y}
\end{align*}
Now if $y\in V_{(st)^{-1}}$ and $z\in V_{s^{-1}}$ with $\eta_{st}(y)=\eta_s(z)$ in $V_s\cap V_{st}$, we may apply $\eta_{s^{-1}}$ to both sides and get
\[z=\eta_{s^{-1}}\eta_s(z)=\eta_{s^{-1}}\eta_{st}(y)=\eta_{s^{-1}}\eta_s\eta_t(y)=\eta_t(y),\]
and deduce that $y\in \eta_t^{-1}(V_{s^{-1}})$. We thus arrive at
\[v_s^*v_{st}=\sum_{z\in V_{s^{-1}},\ y\in V_{(st)^{-1}}}\ip{\epsilon_{\eta_{st}(y)}}{\epsilon_{\eta_s(z)}}\e_{z,y}=\sum_{y\in \eta_t^{-1}(V_{s^{-1}})}\e_{\eta_t(y),y}.\] 
Therefore $v_s^*v_sv_t=v_s^*v_{st}$, and $v$ is a partial $\ast$-representation as claimed.

Now consider the $\ast$-monomorphism
\[\phi:\rC(Z)\to\bM_d;\quad \phi(\delta_z)=\e_{z,z}.\]
We verify that $(\phi,v)$ is covariant. To this end let $t\in G$ and $z\in V_{t^{-1}}$.
\begin{align*}v_t\phi(\delta_z)v_t^*&=\bigg(\sum_{x\in V_{t^{-1}}}\e_{\eta_t(x),x}\bigg)\e_{z,z}\bigg(\sum_{y\in V_{t^{-1}}}\e_{y,\eta_t(y)}\bigg)
=\sum_{x\in V_{t^{-1}},\ y\in V_{t^{-1}}}\ip{\epsilon_z}{\epsilon_x}\e_{\eta_t(x),z}\e_{y,\eta_t(y)}\\
&=\sum_{y\in V_{t^{-1}}}\e_{\eta_t(z),z}\e_{y,\eta_t(y)}=
\sum_{y\in V_{t^{-1}}}\ip{\epsilon_y}{\epsilon_z}\e_{\eta_t(z),\eta_t(y)}=\e_{\eta_t(z),\eta_t(z)}=\phi(\beta_t(\delta_z)).
\end{align*}
Now since every $f\in \rC_0(V_{t^{-1}})$ is a linear span of such $\delta_z$ with $z\in V_{t^{-1}}$, we have $v_t\phi(f)v_t^*=\phi(\beta_t(f))$ for every $f\in \rC_0(V_{t^{-1}})$, so $(\phi, v)$ is covariant.
\end{proof}

\begin{proposition}\label{prop: cov rep into mtx}
Let $\theta:=\big\{\theta_t:U_{t^{-1}}\to U_t\big\}_{t\in G}$ be a continuous partial action of $G$ on a compact space $X$, with its dual partial action $\alpha:=\big\{\alpha_t:\rC_0(U_{t^{-1}})\to \rC_0(U_t)\big\}_{t\in G}$. 

If $(\varphi,v):(\rC(X),G,\alpha)\to\bM_d$ is a covariant representation, then there is a finite set $Z$, a partial action of $G$ on $Z$; $\eta:=\big\{\eta_t:V_{t^{-1}}\to V_t\big\}_{t\in G}$, and a strictly equivariant map $\rho:Z\to X$ satisfying $\varphi(f)=f\circ\rho$ for all $f\in\rC(X)$. 
\end{proposition}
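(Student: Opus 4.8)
The plan is to diagonalize the commutative representation $\varphi$, read off the set $Z$, the map $\rho$, and the partial action $\eta$ from its eigendata, and use the partial isometries $v_t$ only to control multiplicities. Since $X$ is compact, $\rC(X)$ is unital and I may assume $\varphi$ is unital (otherwise compress everything to the corner $\varphi(1)\bM_d\varphi(1)$; the condition $\varphi(f)=f\circ\rho$ forces unitality in any case). As $\rC(X)$ is abelian and $\varphi\colon\rC(X)\to\bM_d$ is a unital $\ast$-homomorphism, its image is a commutative unital $\ast$-subalgebra of $\bM_d$, hence simultaneously diagonalizable: there is an orthonormal basis $\{\xi_i\}_{i=1}^d$ of $\bC^d$ and points $x_i\in X$ with $\varphi(f)\xi_i=f(x_i)\xi_i$ for all $f$. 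Write $F=\{x_1,\dots,x_d\}$ for the distinct points that occur, $E_x=\Span\{\xi_i: x_i=x\}$ for the eigenspace at $x\in F$, and $m_x=\dim E_x$ for its multiplicity. Identifying $\bM_d=\bB(\ell_2(Z))$ along this basis already realizes $\varphi(f)$ as the diagonal operator with entries $f(x_i)$; the remaining task is to organize the indices into a partial $G$-set making $\rho$ \emph{strictly} equivariant, and this is where the multiplicities must be made to match.

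The heart of the argument is to analyze $v_t$ through the covariance relation $v_t\varphi(f)v_t^*=\varphi(\alpha_t(f))$ together with $\alpha_t(f)=f\circ\theta_{t^{-1}}$ from~\eqref{eq: dual action}. First, for $a\in\rC_0(U_s)$ one has $\varphi(a)=v_s\varphi(\alpha_{s^{-1}}(a))v_s^*$, whose range lies in that of $\e_s=v_sv_s^*$; thus the support projection of $\varphi(\rC_0(U_s))$ lies under $\e_s$, and in particular $E_x\subseteq\operatorname{ran}(\e_{t^{-1}})=\operatorname{ran}(v_t^*v_t)$ whenever $x\in U_{t^{-1}}$, so $v_t$ is isometric on such $E_x$. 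Next, fix $x\in U_{t^{-1}}\cap F$ and $\xi\in E_x$; writing $g=\alpha_t(f)\in\rC_0(U_t)$ with $f=g\circ\theta_t$, the identity $v_t^*v_t\xi=\xi$ gives $\varphi(g)(v_t\xi)=v_t\varphi(f)\xi=g(\theta_t(x))\,v_t\xi$ for every $g\in\rC_0(U_t)$. Expanding $v_t\xi$ in the eigenbasis and using that two distinct points can be separated by a function in $\rC_0(U_t)$ (as $\theta_t(x)\in U_t$ is open), one sees $v_t\xi\in E_{\theta_t(x)}$. In particular $\theta_t(x)\in F$ and $v_t$ restricts to a nonzero isometry $E_x\to E_{\theta_t(x)}$; applying this to $t^{-1}$ (with $\theta_t(x)\in U_t$) yields $m_x=m_{\theta_t(x)}$. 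Consequently $\theta$ restricts to a genuine partial action of $G$ on the finite set $F$, with domains $W_t=U_t\cap F$, along whose orbits the multiplicities $m_x$ are constant.

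It remains to assemble the data. Put $Z=\{(x,j): x\in F,\ 1\le j\le m_x\}$, so $|Z|=d$, and define $\rho(x,j)=x$, $V_t=\{(x,j): x\in U_t\}$, and the \emph{trivial lift} $\eta_t(x,j)=(\theta_t(x),j)$ for $(x,j)\in V_{t^{-1}}$. The equality $m_x=m_{\theta_t(x)}$ is exactly what makes each $\eta_t\colon V_{t^{-1}}\to V_t$ a well-defined bijection, and the partial-action axioms ($\eta_e=\id$ and $\eta_s\circ\eta_t\subseteq\eta_{st}$) together with strict equivariance of $\rho$ ($\rho(V_t)\subseteq U_t$, $\rho^{-1}(U_t)=V_t$, and $\rho(\eta_t(z))=\theta_t(\rho(z))$) are then inherited verbatim from those of $\theta|_F$. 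Finally, indexing the diagonalizing basis by $Z$ gives $\varphi(f)=\diag\big(f(\rho(z))\big)_{z\in Z}=f\circ\rho$, as required.

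I expect the main obstacle to be the multiplicity bookkeeping. The tempting route is to let each $v_t$ permute the eigenbasis and define $\eta_t$ directly from $v$; but $v_t$ is only guaranteed to be a \emph{unitary} between eigenspaces $E_x\to E_{\theta_t(x)}$, not a permutation matrix, so there is a genuine phase/cocycle obstruction to realizing $v$ itself as a partial action on $Z$. The point that unblocks the proof is that $v$ is needed \emph{only} to extract the two combinatorial facts $\theta_t(x)\in F$ and $m_x=m_{\theta_t(x)}$; once these hold, the trivial product lift — not $v$ — supplies the partial action. A secondary, routine wrinkle is the reduction to the unital case, which is harmless because $X$ is compact.
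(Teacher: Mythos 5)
Your core argument is correct, but it is a genuinely different proof from the paper's; let me compare the two and then flag one step of yours that needs repair.

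The paper never diagonalizes. It identifies $\varphi(\rC(X))$ abstractly with $\rC(Z)$ (so its $Z$ is your $F$ --- the spectrum of the image, with no multiplicities), observes that each $\varphi(\rC_0(U_t))$ is an ideal $\rC_0(V_t)$ of that image, and defines the partial action on the model directly by conjugation, $\beta_t(g)=v_tgv_t^*$, verifying the partial-action axioms \cstar-algebraically via the identities \eqref{eq: partial rep relations} and \eqref{eq: covariant rep relations}; Gelfand duality then produces $\eta$, and equivariance of $\rho$ is checked by evaluating against functions in $\rC_0(U_t)$. Your eigenspace analysis is the spatial counterpart of this: showing that $v_t$ maps $E_x$ isometrically into $E_{\theta_t(x)}$ is exactly the statement that $\Ad v_t$ sends the minimal projection of the image at $x$ to the one at $\theta_t(x)$, so the two constructions induce the same partial dynamics on $F$. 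Your route buys concreteness ($|Z|=d$, with $\varphi$ literally diagonal) and the correct and useful observation that $v$ enters only through two combinatorial facts, the partial action on $Z$ being supplied by the trivial lift rather than by $v$ (your phase-obstruction remark is well taken). Note, however, that the multiplicity bookkeeping is optional for the proposition as stated: $Z=F$, $\rho$ the inclusion, and $\eta=\theta|_F$ already suffice, since $\varphi(f)=f\circ\rho$ is only meaningful modulo an identification of $\varphi(\rC(X))$ with $\rC(Z)$, and that is all that is used downstream (in Proposition \ref{prop: cts RFD implies RFD} only $\unorm{\varphi_n(f)}=\unorm{f\circ\rho_n}$ matters).

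The step needing repair is the reduction to unital $\varphi$ by compressing to the corner $\varphi(1)\bM_d\varphi(1)$. Compression by $p:=\varphi(1)$ preserves the covariance relation, but it does \emph{not} preserve the partial-representation axioms: $pv_tp$ need not be a partial isometry, and axiom (iii) of a partial representation can fail. For instance, take any nontrivial $G$, the partial action on a one-point space $X=\{x_0\}$ with $U_t=\emptyset$ for $t\neq e$ (so covariance is vacuous off $e$), $\varphi(f)=f(x_0)\e_{1,1}$ in $\bM_2$, and $v_t$ the powers of a self-adjoint unitary not commuting with $\e_{1,1}$; then $pv_tp$ is a strict contraction. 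Since your argument genuinely uses that $v_t^*v_t$ is a projection, this cannot be waved through; relatedly, the conclusion $\varphi(f)=f\circ\rho$ does not ``force unitality'' of $\varphi$ into $\bM_d$ --- it only forces $\varphi(1)$ to be the unit of the image. The fix is simply to drop the reduction and keep the original $v_t$: each $E_x$ with $x\in F$ lies under $p$, so $\bC^d=\ker p\oplus\bigoplus_{x\in F}E_x$; the inclusion $E_x\subseteq\Ran(v_t^*v_t)$ for $x\in U_{t^{-1}}\cap F$ holds exactly as you argue; and in your expansion of $v_t\xi$ the $\ker p$-component also vanishes, because choosing $g\in\rC_0(U_t)$ with $g(\theta_t(x))=1$ your identity gives $v_t\xi=\varphi(g)v_t\xi\in\Ran p$. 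With that repair (and $|Z|=\rank p$ rather than $d$), your proof is complete.
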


\begin{proof}
Since $\varphi(\rC(X))$ is a finite-dimensional commutative \cstar-algebra, we may identify it with $\rC(Z)$ for some finite set $Z$. We then get a map $\rho:Z\to X$ satisfying $\varphi(f)=f\circ\rho$. Also, for each $t\in G$ $\varphi(\rC_0(U_t))\subseteq\rC(E)$ is a closed ideal, whence $\varphi(\rC_0(U_t))=\rC_0(V_t)$ for some subset $V_t\subseteq Z$. Note that
\[\rC_0(V_s)\cap\rC_0(V_t)=\varphi(\rC_0(U_s))\cap\varphi(\rC_0(U_t))=\varphi(\rC_0(U_s)\cap\rC(U_t)).\footnote{If $\varphi:A\to B$ is a surjective $\ast$-homomorphism bewteen \cstar-algebras, and $I,J\subseteq A$ are closed ideals, then $\varphi(I),\varphi(J)\subseteq B$ are closed ideals and $\varphi(I\cap J)=\varphi(IJ)=\varphi(I)\varphi(J)=\varphi(I)\cap\varphi(J)$.}.\]

We claim that for every $t\in G$, $\rho(V_t)\subseteq U_t$ and $\rho^{-1}(U_t)\subseteq V_t$. Well, if $z\in V_t$, then there is an $f_z\in\rC_0(U_t)$ with $\varphi(f_z)=\delta_z$, so
\[1=\delta_z(z)=\varphi(f_z)(z)=f_z(\rho(z)),\]
which forces $\rho(z)\in U_t$. Also, if $z\in\rho^{-1}(U_t)$, then $\rho(z)\in U_t$. We can then find an $f\in\rC_0(U_t)$ with $f(\rho(z))=1$. We get $\varphi(f)\in\rC_0(V_t)$, but
\[\varphi(f)(z)=f(\rho(z))=1,\]
so $z\in V_t$.

Next, we claim that we have a partial action $\beta:G\to\pSym(Z)$;
\[\big\{\beta_t:\rC_0(V_{t^{-1}})\to\rC_0(V_t)\big\}_{t\in G};\quad \beta_t(g)=v_tgv_t^*.\]
Given $g\in\rC_0(V_{t^{-1}})$, there is an $f\in\rC_0(U_{t^{-1}})$ with $\varphi(f)=g$. So
\[v_tgv_t^*=v_t\varphi(f)v_t^*=\varphi(\alpha_t(f))\in\varphi(\rC_0(U_t))=\rC_0(V_t),\]
so each $\beta_t$ is well-defined and clearly $\ast$-linear. Similarly $\beta_t$ is a $\ast$-homomorphism. Now, given $g\in\rC_0(V_{t^{-1}})$ and $f\in\rC_0(U_{t^{-1}})$ with $\varphi(f)=g$, we have
\[\beta_{t^{-1}}\circ\beta_t(g)=v_{t^{-1}}v_tgv_t^*v_{t^{-1}}^*=v_{t^{-1}}v_{t^{-1}}^*gv_{t^{-1}}v_{t^{-1}}^*=\e_{t^{-1}}\varphi(f)\e_{t^{-1}}\stackrel{\eqref{eq: covariant rep relations}}{=}\varphi(f)=g.\]
Thus $\beta_t$ is a $\ast$-isomorphism. Now
\[\dom(\beta_s\circ\beta_t)=\beta_{t}^{-1}\big(\rC_{0}(V_t)\cap\rC_{0}(V_{s^{-1}})\big)=\beta_{t^{-1}}\big(\varphi(\rC_{0}(U_t)\cap\rC_{0}(U_{s^{-1}}))\big),\]
so if $f\in\rC_{0}(U_t)\cap\rC_{0}(U_{s^{-1}})$, then $\alpha_{t^{-1}}(f)$ is in $\rC_{0}(U_{t^{-1}})\cap\rC_{0}(U_{(st)^{-1}})$, so
\begin{align*}
\beta_{t^{-1}}(\varphi(f))&=v_{t^{-1}}\varphi(f)v_{t^{-1}}^*\stackrel{\eqref{eq: covariant rep relations}}{=}v_{t^{-1}}\e_{s^{-1}}\varphi(f)\e_{s^{-1}}v_{t^{-1}}^*
\stackrel{\eqref{eq: partial rep relations}}{=}\e_{t^{-1}s^{-1}}v_{t^{-1}}\varphi(f)v_{t^{-1}}^*\e_{t^{-1}s^{-1}}\\&=\e_{(st)^{-1}}\varphi(\alpha_{t^{-1}}(f))\e_{(st)^{-1}}\stackrel{\eqref{eq: covariant rep relations}}{=}\varphi(\alpha_{t^{-1}}(f))
\end{align*}
which belongs to $\varphi\big(\rC_{0}(U_{t^{-1}})\cap\rC_{0}(U_{(st)^{-1}})\big)=\rC_{0}(V_{t^{-1}})\cap\rC_{0}(V_{(st)^{-1}})\subseteq\rC_{0}(V_{(st)^{-1}})$. Thus $\dom(\beta_s\circ\beta_t)\subseteq\dom(\beta_{st})$. For such a $g=\varphi(\alpha_{t^{-1}}(f))$ in $\dom(\beta_s\circ\beta_t)$, 
\[\beta_s\circ\beta_t(g)=\beta_s\big(v_t\varphi(\alpha_{t^{-1}}(f))v_t^*\big)=\beta_s(\varphi(f))=v_s\varphi(f)v_s^*=\varphi(\alpha_s(f)),\]
whereas,
\[\beta_{st}(g)=v_{st}\varphi(\alpha_{t^{-1}}(f))v_{st}^*=\varphi\big(\alpha_{st}(\alpha_{t^{-1}}(f))\big)=\varphi(\alpha_s(f)).\]
Thus $\beta$ is a partial action on $\rC(Z)$. By duality we obtain a partial action $\eta:G\to\pSym(Z)$ with partial symmetries $\big\{\eta_t:V_{t^{-1}}\to V_t\big\}_{t\in G}$ satisfying
\[\beta_t(g)=g\circ\eta_{t^{-1}},\quad g\in\rC_0(V_{t^{-1}}).\]
We need only show that $\rho$ is equivariant. To this end let $z\in V_{t^{-1}}$ and $f\in\rC_0(U_t)$ be arbitrary. Then
\begin{align*}
f\big(\rho(\eta_t(z))\big)&=\varphi(f)(\eta_t(z))
=\beta_{t^{-1}}(\varphi(f))(z)=v_{t^{-1}}\varphi(f)v_{t^{-1}}^*=\varphi(\alpha_{t^{-1}}(f))(z)\\
&=\alpha_{t^{-1}}(f)(\rho(z))=f\big(\theta_t(\rho(z))\big).
\end{align*}
Since this holds for all $f\in\rC_0(U_t)$ we have $\rho(\eta_t(z))=\theta_t(\rho(z))$ as desired. 
\end{proof}

\begin{proposition}\label{prop: cts RFD implies RFD}
Let $\theta:G\to\pHomeo(X)$ be a continuous partial action of $G$ on a compact metric space $X$. Then $\theta$ is RFD if and only if the induced partial \cstar-action $\alpha:G\to\pAut(\rC(X))$ is RFD.
\end{proposition}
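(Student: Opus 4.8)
The two technical Propositions~\ref{prop: finite partial systems} and~\ref{prop: cov rep into mtx} do the heavy lifting here: the former turns a partial action on a finite set into a faithful covariant representation into some $\bM_d$, while the latter recovers, from any covariant representation of $(\rC(X),G,\alpha)$ into $\bM_d$, a finite set $Z$, a partial action $\eta$ on $Z$, and a strictly equivariant map $\rho\colon Z\to X$ with $\varphi(f)=f\circ\rho$. The plan is to use these two results as dictionaries translating between Definition~\ref{def: RFD action} and the $\cstar$-algebraic RFD property of $\alpha$. The governing principle is that $\delta$-density of $\rho(Z)$ corresponds, via uniform continuity of functions on the compact space $X$, to approximate isometry of $f\mapsto f\circ\rho$.

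For the forward direction, assume $\theta$ is RFD and fix $\varepsilon>0$ and a finite $\Omega\subseteq\rC(X)$. Since each $f\in\Omega$ is uniformly continuous, I would first choose $\delta>0$ with $d(x,y)<\delta\Rightarrow|f(x)-f(y)|<\varepsilon$ for all $f\in\Omega$, and then feed this $\delta$ into Definition~\ref{def: RFD action} to obtain $Z$, $\eta=\{\eta_t\colon V_{t^{-1}}\to V_t\}$, and $\rho\colon Z\to X$. Dualizing $\eta$ gives a partial $\cstar$-system $\beta$ on $\rC(Z)$, and the strict equivariance together with the \emph{exact} equivariance $\rho\circ\eta_t=\theta_t\circ\rho$ of condition (ii) is precisely what makes $\overline{\rho}\colon\rC(X)\to\rC(Z)$, $f\mapsto f\circ\rho$, a $G$-equivariant $\ast$-homomorphism sending $\rC_0(U_t)$ into $\rC_0(V_t)$ and intertwining $\alpha$ with $\beta$. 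Applying Proposition~\ref{prop: finite partial systems} to $\eta$ yields a faithful covariant representation $(\phi,v)$ of $(\rC(Z),G,\beta)$ in $\bM_d$; then $(\phi\circ\overline{\rho},v)$ is a covariant representation of $(\rC(X),G,\alpha)$, where covariance reduces to that of $(\phi,v)$ after pushing $f$ through $\overline{\rho}$. Faithfulness of $\phi$ makes $\phi\circ\overline{\rho}$ isometric on $\rC(Z)$, so for $f\in\Omega$ one has $\|(\phi\circ\overline{\rho})(f)\|=\max_{z\in Z}|f(\rho(z))|$, and the $\delta$-density of $\rho(Z)$ combined with the uniform-continuity choice of $\delta$ gives $\|(\phi\circ\overline{\rho})(f)\|\geq\|f\|-\varepsilon$. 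Hence $\alpha$ is RFD.

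For the reverse direction, assume $\alpha$ is RFD and fix $\delta>0$. Using compactness I would pick a finite $\tfrac{\delta}{2}$-net $x_1,\dots,x_m$ of $X$ and, for each $i$, a function $f_i\in\rC(X)$ with $f_i(x_i)=1$, $0\leq f_i\leq 1$, and $\supp(f_i)\subseteq B(x_i,\tfrac{\delta}{2})$. Feeding $\Omega=\{f_1,\dots,f_m\}$ and $\varepsilon=\tfrac12$ into the RFD property of $\alpha$ produces a covariant representation $(\varphi,v)\colon(\rC(X),G,\alpha)\to\bM_d$ with $\|\varphi(f_i)\|\geq\tfrac12>0$. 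Proposition~\ref{prop: cov rep into mtx} then delivers $Z$, $\eta$, and a strictly equivariant $\rho\colon Z\to X$ with $\varphi(f)=f\circ\rho$; conditions (i) and (ii) of Definition~\ref{def: RFD action} hold automatically, being exactly the strict equivariance furnished by that proposition. It remains to verify the $\delta$-density (iii): from $\|\varphi(f_i)\|=\max_{z}|f_i(\rho(z))|>0$ we get some $z_i$ with $f_i(\rho(z_i))>0$, hence $\rho(z_i)\in B(x_i,\tfrac{\delta}{2})$; for arbitrary $x\in X$, choosing $x_i$ with $d(x,x_i)<\tfrac{\delta}{2}$ and using the triangle inequality yields $d(x,\rho(z_i))<\delta$. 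Thus $\rho(Z)$ is $\delta$-dense and $\theta$ is RFD.

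The conceptual content is already compressed into the two propositions, so I do not expect a genuine obstacle; the delicate points are purely bookkeeping. In the forward direction one must check that $\overline{\rho}$ respects the ideal structure $\rC_0(U_t)\to\rC_0(V_t)$ (which uses $\rho^{-1}(U_t)\subseteq V_t$) and correctly intertwines the two partial systems before composing with $(\phi,v)$; in the reverse direction the only nontrivial move is engineering the finite test family $\{f_i\}$ so that approximate isometry is strong enough to force $\delta$-density. Both are straightforward given compactness of $X$ and the exact equivariance built into Definition~\ref{def: RFD action}.
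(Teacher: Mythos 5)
Your proof is correct, and the forward direction is essentially the paper's own argument: choose $\delta$ by uniform continuity, invoke Definition~\ref{def: RFD action}, check that $\overline{\rho}\colon f\mapsto f\circ\rho$ is an equivariant $\ast$-homomorphism into $(\rC(Z),G,\beta)$, and compose with the faithful covariant representation of Proposition~\ref{prop: finite partial systems}. Where you genuinely diverge is the converse. The paper assumes $\alpha$ is RFD in its sequential form (a sequence of covariant representations $(\varphi_n,v_n)$ with $\|\varphi_n(f)\|\to\|f\|$, which implicitly uses separability of $\rC(X)$), extracts via Proposition~\ref{prop: cov rep into mtx} a sequence of finite models $\rho_n\colon Z_n\to X$, and then proves $\delta$-density of some $\rho_n(Z_n)$ by contradiction: if it fails for every $n$, a convergent subsequence of bad points $x_n\to x$ and a single bump function supported in $B(x,\delta/2)$ force $\|\varphi_n(f)\|\to 0$ while $\|f\|=1$. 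You instead argue directly and quantitatively: take a finite $\delta/2$-net, attach a bump function $f_i$ to each net point, apply the RFD property \emph{once} with $\varepsilon=\tfrac12$ to this finite family, and read off $\delta$-density from $\|\varphi(f_i)\|=\max_z|f_i(\rho(z))|>0$ plus the triangle inequality. Both routes lean on the same two structural propositions, so the conceptual content is identical; what your version buys is the elimination of the sequential reformulation, the subsequence extraction, and the contradiction — a single finite test family suffices, and the argument would survive even without separability considerations. The cost is having to engineer the test family in advance, but as you note this is routine given compactness of $X$.
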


\begin{proof}
Let $\theta=\big\{\theta_t:U_{t^{-1}}\to U_t\big\}_{t\in G}$ be our partial homeomorphisms, and  let $\alpha=\big\{\alpha_t:D_{t^{-1}}\to D_t\big\}_{t\in G}$ the induced partial automorphisms of $\rC(X)$, where $D_t:=\rC_0(U_t)$.

First assume that $\theta$ is RFD. Let $\Omega\subseteq \rC(X)$ be a finite subset and $\varepsilon>0$. By uniform continuity there is a $\delta>0$ such that
\[x,y\in X,\ d(x,y)<\delta\implies |f(x)-f(y)|<\varepsilon\quad\forall f\in\Omega.\]
Run this $\varepsilon>0$ through Definition~\ref{def: RFD action} and obtain $Z$, $\eta:G\to\pSym(Z)$, and $\rho:Z\to X$ satisfying the properties (i), (ii), (iii) of the definition. Moreover, let $\beta:G\to\pAut(\rC(Z))$ be the dual partial action with automorphisms $\big\{\beta_t: E_{t^{-1}}\to E_t\big\}_{t\in G}$, where $E_t:=\rC_0(V_t)$ for each $t\in G$.

We claim that the $\ast$-homomorphism \[\overline{\rho}:\rC(X)\to\rC(Z);\quad\overline{\rho}(f)=f\circ\rho\]
is $G$-equivariant. To this end let $f\in D_t$ and $z\in Z$.
\[\overline{\rho}(f)(z)\neq0\implies f(\rho(z))\neq0\implies\rho(z)\in U_t\stackrel{\text{(i) of Def.~\ref{def: RFD action}}}{\implies} z\in\rho^{-1}(U_t)\subseteq V_t.\]
Thus $\overline{\rho}(D_t)\subseteq E_t$. Now if $f\in D_{t^{-1}}$, $\alpha_t(f)\in D_t$, so $\overline{\rho}(\alpha_t(f))\in E_t$. Since $\overline{\rho}(f)\in E_{t^{-1}}$, $\beta_t(\overline{\rho}(f))\in E_t$ as well. Moreover, by (iii) of Definition~\ref{def: RFD action} we have
\[\overline{\rho}(\alpha_t(f))=\alpha_t(f)\circ\rho=f\circ\theta_{t^{-1}}\circ\rho=f\circ\rho\circ\eta_{t^{-1}}=\beta_t(f\circ\rho)=\beta_t(\overline{\rho}(f)),\]
thus $\overline{\rho}$ is indeed equivariant.

If $x\in X$, find $z\in Z$ with $d(\rho(z),x)<\delta$. Then for every $f\in\Omega$
\[|f(x)|\leq |f(x)-f(\rho(z))|+|f(\rho(z))|<\varepsilon+\sup_{z\in Z}\|f(\rho(z))\|=\varepsilon+\|\overline{\rho}(f)\|_{\mathrm{u}},\]
so $\|f\|_{\mathrm{u}}\leq\varepsilon+\|\overline{\rho}(f)\|_{\mathrm{u}}$ for $f\in\Omega$.

Now let $(\phi,v):(\rC(Z),G,\beta)\too\bM_d$ be the covariant representation from Proposition~\ref{prop: finite partial systems}, and set $\varphi:=\phi\circ\overline{\rho}$. Then for all $t\in G$ and $f\in D_{t^{-1}}$,
\[v_t\varphi(f)v_t^*=v_t\phi(\overline{\rho}(f))v_t^*=\phi(\beta_t(\overline{\rho}(f)))=\phi(\overline{\rho}(\alpha_t(f)))=\varphi(\alpha_t(f)).\]
Now using the fact that $\phi$ is isometric, if $f\in\Omega$ we get
\[\unorm{\varphi(f)}=\unorm{\phi\circ\overline{\rho}(f)}=\unorm{\overline{\rho}(f)}\geq\unorm{f}-\varepsilon.\]
Thus $\alpha$ is an RFD partial action.

Conversely, suppose $\alpha$ is RFD. We then have a sequence of covariant representations:
\[(\varphi_n,v_n):(\rC(X),G,\alpha)\too\bM_{k_n}\]
with $\|\varphi_n(f)\|\to\unorm{f}$. Using Proposition~\ref{prop: cov rep into mtx} we obtain a sequences of finite sets $(Z_n)_n$, partial actions $\big(\eta_n:G\to\pSym(Z_n)\big)_n$, and strictly equivariant maps $\big(\rho_n:Z_n\to X\big)_n$ with $\varphi_n(f)=f\circ\rho_n$ for all $f\in\rC(X)$. 

Let $\delta>0$. We claim that for some $n\geq1$, $X\subseteq_\delta\rho_n(Z_n)$. This would complete the proof. Suppose not. Then for each $n$ there is a $x_n\in X$ with
\[\rho_n(Z_n)\cap B(x_n,\delta)=\emptyset.\]
By passing to a subsequence we may assume that $(x_n)_n\to x$ for some $x\in X$. Now for $n$ large enough we have $d(x,x_n)<\delta/2$. For those $n$ and $z\in Z_n$ we have
\[\delta\leq d(\rho_n(z),x_n)\leq d(\rho_n(z),x)+d(x,x_n)<d(\rho_n(z),x)+\delta/2.\]
so $d(\rho_n(z),x)>\delta_2$ for all $n\in Z_n$. Now find $f\in\rC_c(X,[0,1])$ with $f(x)=1$ vanishing outside $B(x,\delta/2)$. Then for large $n$
\[0=\unorm{f\circ\rho_n}=\unorm{\varphi_n(f)}\to\unorm{f}=1,\]
a contradiction. 
\end{proof}

It is known that a residually finite amenable group $G$ produces an RFD reduced group \cstar-algebra $\cstarl{G}$ (\cite{BekkaLouGroups}). Combining this with Propositions~\ref{prop: RFD crossed product} and~\ref{prop: cts RFD implies RFD} we arrive at the following result.

\begin{theorem}\label{thm: RFD crossed product}
Let $\theta:G\to\pHomeo(X)$ be a continuous partial action of a countable group $G$ on a compact metric space $X$. 
\begin{enumerate}[(1)]
	\item If the reduced crossed product $\rC(X)\rtimes_\lambda G$ is RFD, then the action $\theta$ is RFD.
	\item If $\theta$ is RFD and $\cstarl{G}$ is RFD, then the reduced crossed product $\rC(X)\rtimes_\lambda G$ is RFD.
	\item If $G$ is an amenable and residually finite group and $G\to\pHomeo(X)$ is an RFD continuous partial action, then $\rC(X)\rtimes_\lambda G$ is RFD.
\end{enumerate}
\end{theorem}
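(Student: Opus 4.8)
The plan is to obtain this theorem as a purely formal consequence of the dictionary already established among the topological partial action $\theta$, its dual partial \cstar-system $\alpha:G\to\pAut(\rC(X))$, and the associated Fell bundle $\cA_\alpha$. Each of the three parts merely runs the appropriate direction of that chain of equivalences and feeds in the relevant ambient RFD hypothesis, so the whole argument is an assembly of Propositions~\ref{prop: RFD crossed product} and~\ref{prop: cts RFD implies RFD} together with the input on residually finite amenable groups.

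For (1) I would begin from the assumption that $\rC(X)\rtimes_\lambda G=\cstarl{\cA_\alpha}$ is RFD. Part (2) of Proposition~\ref{prop: RFD crossed product} then immediately yields that the partial \cstar-system $\alpha$ is RFD, and Proposition~\ref{prop: cts RFD implies RFD} transports this back to the topological level to conclude that $\theta$ is RFD. For (2) I would traverse the same chain in the opposite direction: from $\theta$ RFD, Proposition~\ref{prop: cts RFD implies RFD} gives that $\alpha$ is RFD. At this point one must record that $\rC(X)$ is separable (as $X$ is compact metric) and that $G$ is countable, so the separable formulation of Proposition~\ref{prop: RFD crossed product}(3) applies; invoking that part with the standing hypothesis that $\cstarl{G}$ is RFD then produces RFD-ness of $\rC(X)\rtimes_\lambda G$.

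For (3) I would simply supply the group-theoretic input: when $G$ is amenable and residually finite, the cited result of~\cite{BekkaLouGroups} guarantees that $\cstarl{G}$ is RFD. With that hypothesis in hand, part (2) applies verbatim and finishes the proof.

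The main obstacle here is not mathematical depth but careful hypothesis-matching: one must verify that the separability and countability assumptions required by the separable version of Proposition~\ref{prop: RFD crossed product}(3) are genuinely in force (they are, since $X$ is compact metric and $G$ countable), and that the statement imported from~\cite{BekkaLouGroups} is quoted in exactly the form needed to feed into part (2). Once the bookkeeping of hypotheses is settled, each part is a one- or two-line concatenation of the equivalences proved above.
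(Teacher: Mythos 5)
Your proposal is correct and is exactly the paper's argument: the paper gives no separate proof of this theorem, stating only that it follows by combining Propositions~\ref{prop: RFD crossed product} and~\ref{prop: cts RFD implies RFD} with the Bekka--Louvet fact that residually finite amenable groups have RFD reduced group \cstar-algebras, which is precisely the chain you assemble. Your extra care in checking the separability of $\rC(X)$ and countability of $G$ for Proposition~\ref{prop: RFD crossed product}(3) is the only bookkeeping the paper leaves implicit.
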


From Example~\ref{ex: Bernoulli shift} and Theorem~\ref{thm: RFD crossed product} we arrive at the following.

\begin{corollary}
If $G$ is countable, amenable, and residually finite, then the partial crossed product $\rC(X_G)\rtimes_\lambda G$ arising from the partial Bernoulli action is RFD. 
\end{corollary}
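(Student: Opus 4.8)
The plan is to assemble two results already established in the excerpt: the RFD-ness of the partial Bernoulli action recorded in Example~\ref{ex: Bernoulli shift}, and the implication of Theorem~\ref{thm: RFD crossed product}(3). Since all of the analytic content resides in those two statements, the argument reduces to checking that the hypotheses of Theorem~\ref{thm: RFD crossed product}(3) are satisfied by the partial Bernoulli system.

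First I would confirm the ambient setting. Because $G$ is countable, the space $\Pi=\{0,1\}^G$ is metrizable via the metric $d$ displayed in Example~\ref{ex: Bernoulli shift}, and $X_G=X_e=\{x\in\Pi\mid x(e)=1\}$ is a clopen, hence compact, subset of $\Pi$. Thus $X_G$ is a compact metric space, and the partial Bernoulli action $\theta^G:G\to\pHomeo(X_G)$ from~\eqref{eq: Bernoulli action} is a continuous partial action of a countable group on a compact metric space, which is precisely the context of Theorem~\ref{thm: RFD crossed product}.

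Next I would invoke the Claim proved inside Example~\ref{ex: Bernoulli shift}: as $G$ is countable and residually finite, the partial Bernoulli action $\theta^G$ is RFD. The substance of that Claim is the residual finiteness of $G$, which, for a prescribed $\delta>0$ and a truncation $\{t_0,\dots,t_N\}$ of $G$, supplies a finite group $\Gamma$ together with a homomorphism $\phi\colon G\to\Gamma$ separating $t_0,\dots,t_N$; the induced model system on $\{0,1\}^\Gamma$ and the strictly equivariant map $\rho(z)=z\circ\phi$ then verify condition~(ii) of Definition~\ref{def: RFD action}, the remaining conditions having been checked in the example.

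Finally, with $G$ amenable and residually finite and the action $\theta^G$ now known to be RFD, Theorem~\ref{thm: RFD crossed product}(3) applies directly and yields that the reduced partial crossed product $\rC(X_G)\rtimes_\lambda G$ is RFD. I do not anticipate a genuine obstacle here, since both ingredients are already in hand; the only points requiring a moment's care are confirming that $X_G$ is a bona fide compact metric space so that the theorem is applicable, and matching the RFD-ness of $\theta^G$ to the hypothesis of part~(3) rather than inadvertently needing the converse direction~(1). Both are immediate.
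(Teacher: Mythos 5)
Your proposal is correct and is exactly the paper's own argument: the paper derives this corollary by combining the Claim in Example~\ref{ex: Bernoulli shift} (countable $+$ residually finite $\Rightarrow$ the partial Bernoulli action is RFD) with Theorem~\ref{thm: RFD crossed product}(3), just as you do. Your additional verification that $X_G$ is a compact metric space is a reasonable bit of due diligence the paper leaves implicit, but it introduces nothing new.
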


\subsection{Matricial Field Bundles}

We now move to quasidiagonality and the matricial filed property. Halmos introduced quasidiagonality for a single operator, and Voiculescu imported the notion to \cstar-algebra theory, obtaining a characterization of quasidiagonal \cstar-algebras in terms of external completely positive finite-rank approximations (see Chapter 7 of~\cite{BrownOzawaBook}).
By relaxing the complete positive requirements we recover the MF algebras. Matricial Field (MF) algebras algebras were introduced by Blackadar and Kirchberg in~\cite{BK97} (see also Chapter 11 of~\cite{BrownOzawaBook}). 

A \cstar-algebra $A$ is \emph{Matricial Field} if for every finite subset $\cF\subseteq A$ and every $\varepsilon>0$, there is a $d\in\bN$ and a linear map $\varphi:A\to\bM_d$ such that for all $x,y\in\cF$,
\begin{enumerate}[(i)]\label{eq: MF property}
\item $\|\varphi(x)^*-\varphi(x^*)\|<\varepsilon$,
\item $\|\varphi(xy)-\varphi(x)\varphi(y)\|<\varepsilon$, and
\item $\big|\|\varphi(x)\|-\|x\|\big|<\varepsilon$.\footnote{In the separable case we may take the approximating map $\varphi$ to be $\ast$-linear and we may replace (iii) by $\|\varphi(x)\|>\|x\|-\varepsilon$ for all $x\in\cF$. However, for our purposes the above definition is convenient and sufficient. 
}
\end{enumerate}
If we require the map $\varphi$ to be c.p.c., then $A$ is \emph{quasidiagonal} (QD).

Note that a \cstar-algebra is MF (QD) if and only if all of its separable \cstar-subalgebras are MF (QD), so it is often convenient to restrict to the separable setting. In that case, $A$ is MF (QD) if and only if there is a natural sequence $(k_n)_n$ and a sequence of linear (c.p.c.) maps $\big(\varphi_n:A\to\bM_{k_n}\big)_{n\geq1}$ satisfying: for all $a,b\in A$
\begin{enumerate}[(i)]\label{eq: MF separable property}
\item $\|\varphi_n(a)^*-\varphi_n(a^*)\|\stackrel{n\to\infty}{\too}0$,
\item $\|\varphi_n(a)\varphi_n(b)-\varphi_n(ab)\|\stackrel{n\to\infty}{\too}0$, and
\item $\|\varphi_n(a)\|\stackrel{n\to\infty}{\too}\|a\|$.
\end{enumerate}
Equivalently, a separable $A$ is MF if and only is there is faithful embedding
\[A\too\frac{\prod_{n\geq1}\bM_{k_n}}{\bigoplus_{n\geq1}\bM_{k_n}},\]
and $A$ is QD if and only if there is such an embedding with a c.p.c. lift.

Here is the pertinent definition for Fell bundles.

\begin{definition}\label{def: MF bundle}
A Fell bundle $\cB=\{B_t\}_{t\in G}$ is said to be \emph{Matricial Field} (MF) if for every $\varepsilon>0$ and finite subset $\Omega\subseteq B$ (the total space of $\cB$), there is a $d\in\bN$ and a family of linear maps $\varphi:=\big\{\varphi_t:B_t\to\bM_d\big\}_{t\in G}$ satisfying the following:
\begin{enumerate}[(i)]
\item $\|\varphi_t(b)^*-\varphi_{t^{-1}}(b^*)\|<\varepsilon$ for all $b\in \Omega\cap B_t$,
	\item  $\|\varphi_s(a)\varphi_t(b)-\varphi_{st}(ab)\|<\varepsilon$, for all $a\in \Omega\cap B_s$ and $b\in\Omega\cap B_t$,
	\item  $\big|\|\varphi_t(b)\|-\|b\|\big|<\varepsilon$, for all $b\in\Omega\cap B_t$.
\end{enumerate}
If, in addition, we require the map $\varphi_e:B_e\to\bM_d$ to be completely positive and contractive (c.p.c), the bundle $\cB$ is called Quasidiagonal (QD).
\end{definition}

Notice that if $\cB$ is MF (QD) then the unit fiber \cstar-algebra $B_e$ is also MF (QD).

\begin{proposition}\label{prop: MF alg implies MF bundle}
Let $\cB=\{B_t\}_{t\in G}$ be a Fell bundle with total space $B$. If the reduced cross-sectional \cstar-algebra $\cstarl{\cB}$ is MF (QD), then the bundle $\cB$ is MF (QD).
\end{proposition}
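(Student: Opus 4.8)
The plan is to mimic the argument of Proposition~\ref{prop: RFD bundle}(1), replacing the $\ast$-homomorphism furnished there by the RFD property with the approximately multiplicative linear map supplied by the MF property. Fix a finite subset $\Omega\subseteq B$ of the total space and $\varepsilon>0$. Since each element of $\Omega$ lies in a single fiber, the set
\[
\Sigma:=\bigcup_{t\in G}\big\{b\delta_t\mid b\in\Omega\cap B_t\big\}\subseteq\rC_c(\cB)\subseteq\cstarl{\cB}
\]
is finite. Applying the MF property of $\cstarl{\cB}$ to the finite set $\Sigma$ and the tolerance $\varepsilon$ yields a $d\in\bN$ and a linear map $\psi:\cstarl{\cB}\to\bM_d$ which is approximately self-adjoint, approximately multiplicative, and approximately isometric to within $\varepsilon$ on $\Sigma$. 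I would then define the family $\varphi:=\{\varphi_t:B_t\to\bM_d\}_{t\in G}$ by composing with the canonical isometric embedding $j_t(b)=b\delta_t$; that is, $\varphi_t:=\psi\circ j_t$.

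It remains to check conditions (i)--(iii) of Definition~\ref{def: MF bundle}, and here the point is simply that the $\ast$-algebra structure of $\rC_c(\cB)$ translates each bundle condition into the corresponding condition for $\psi$ on $\Sigma$. Using $(b\delta_t)^*=b^*\delta_{t^{-1}}$, for $b\in\Omega\cap B_t$ we have $\varphi_{t^{-1}}(b^*)=\psi\big((b\delta_t)^*\big)$, so
\[
\big\|\varphi_t(b)^*-\varphi_{t^{-1}}(b^*)\big\|=\big\|\psi(b\delta_t)^*-\psi\big((b\delta_t)^*\big)\big\|<\varepsilon,
\]
which is (i). Similarly, $(a\delta_s)(b\delta_t)=ab\,\delta_{st}$ gives $\varphi_{st}(ab)=\psi\big((a\delta_s)(b\delta_t)\big)$, so for $a\in\Omega\cap B_s$ and $b\in\Omega\cap B_t$,
\[
\big\|\varphi_s(a)\varphi_t(b)-\varphi_{st}(ab)\big\|=\big\|\psi(a\delta_s)\psi(b\delta_t)-\psi\big((a\delta_s)(b\delta_t)\big)\big\|<\varepsilon,
\]
which is (ii). Finally, since $j_t$ is isometric we have $\|b\delta_t\|_\lambda=\|b\|$, and hence $\big|\|\varphi_t(b)\|-\|b\|\big|=\big|\|\psi(b\delta_t)\|-\|b\delta_t\|_\lambda\big|<\varepsilon$, which is (iii). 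Thus $\cB$ is MF.

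For the quasidiagonal case, the QD hypothesis on $\cstarl{\cB}$ lets us additionally take $\psi$ to be c.p.c. Then $\varphi_e=\psi\circ j_e$ is c.p.c.\ as well, since $j_e:B_e\hookrightarrow\cstarl{\cB}$ is an isometric (hence contractive) $\ast$-homomorphism---in particular c.p.c.---and compositions of c.p.c.\ maps are c.p.c. As Definition~\ref{def: MF bundle} only requires $\varphi_e$ to be c.p.c., this shows $\cB$ is QD.

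I do not expect a genuine obstacle here: the entire content is the observation that the fiberwise MF data for $\cB$ is precisely the restriction, along the isometric representation $j$, of the MF data for $\cstarl{\cB}$ on the finite set $\Sigma$. The only mild care needed is to ensure $\Sigma$ contains exactly the elements $a\delta_s$, $b\delta_t$ whose images and products appear in conditions (i) and (ii), which the definition of $\Sigma$ arranges. By contrast, the converse implication---recovering MF-ness of the cross-sectional algebra from MF-ness of the bundle---is the substantive direction, and would require the Fell absorption machinery of Corollary~\ref{cor: Fell cor} together with exactness-type hypotheses on $G$, in analogy with Proposition~\ref{prop: RFD bundle}(2).
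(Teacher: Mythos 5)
Your proof is correct and is essentially the paper's own argument: the paper likewise applies the MF (resp.\ QD) property of $\cstarl{\cB}$ to the finite set $\bigcup_{t\in G}\{\lambda_t(b)\mid b\in\Omega\cap B_t\}$, which is exactly your $\Sigma$ under the identification $j_t(b)=b\delta_t=\lambda_t(b)$ inside $\cstarl{\cB}$, and then composes the resulting approximately multiplicative map with the isometric fiber embeddings to verify conditions (i)--(iii) of Definition~\ref{def: MF bundle}, handling the c.p.c.\ case the same way you do. Nothing further is needed.
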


\begin{proof}
Suppose $\Omega\subseteq B$ is finite and $\varepsilon>0$. Set $\Omega_t:=\Omega\cap B_t$, and
\[\cF:=\bigcup_{t\in G}\big\{\lambda_t(b)\mid b\in\Omega_t\big\}\subseteq\cstarl{\cB}.\] 
Note that $\cF$ is finite. If $\cstarl{\cB}$ is MF there is a $d\in\bN$ and a linear map $\varphi:\cstarl{\cB}\to\bM_d$ satisfying the MF approximation property~\eqref{eq: MF property}.

For every $t\in G$ set
\[\varphi_t:=\varphi\circ\lambda_t:B_t\to\bM_d.\]
Clearly $\varphi$ is linear. For $b\in B_t$
\[\|\varphi_t(b)^*-\varphi_{t^{-1}}(b^*)\|=\|\varphi(\lambda_t(b))^*-\varphi(\lambda_{t^{-1}}(b^*))\|=\|\varphi(\lambda_t(b))^*-\varphi(\lambda_t(b)^*)\|<\varepsilon.\]
Now let $a\in\Omega_s$, $b\in\Omega_t$.
\begin{align*}
\|\varphi_s(a)\varphi_t(b)-\varphi_{st}(ab)\|&=\|\varphi(\lambda_s(a))\varphi(\lambda_t(b))-\varphi(\lambda_{st}(ab))\|
\\&=\|\varphi(\lambda_s(a))\varphi(\lambda_t(b))-\varphi(\lambda_{s}(a)\lambda_t(b))\|<\varepsilon.
\end{align*}
Finally, for $b\in\Omega_t$,
\[\big|\|\varphi_t(b)\|-\|b\|\big|=\big|\|\varphi(\lambda_t(b))\|-\|\lambda_t(b)\|\big|<\varepsilon,\]
thus $\cB$ is MF.

If $\cstar_\lambda(\cB)$ is QD we can choose our  $\varphi$ above to be c.p.c, and clearly $\varphi_e=\varphi\circ\lambda_e$ is c.p.c. as well.
\end{proof}

We now embark on proving converse  results of Proposition~\ref{prop: MF alg implies MF bundle} in the separable setting. For this we will need the notion of approximate representations.

\begin{definition}\label{def: approx rep}
Let $\cB=\{B_t\}_{t\in G}$ be a (unital) separable Fell Bundle over $G$, and let $\cM$ be a class of (unital) \cstar-algebras. An \emph{approximate representation} of $\cB$ in $\cM$ consists of a sequence $(M_n)_{n\geq1}$ in $\cM$ and a sequence $(\varphi_n)_{n\geq1}$, where each $\varphi_n$ denotes a family of linear maps
\[\varphi_n:=\big\{\varphi_{n,t}:B_t\to M_n\big\}_{t\in G}\]
(with $\varphi_{n,e}$ unital) satisfying: for all $s,t\in G$, and for all $a\in B_s$, $b\in B_t$,
\begin{enumerate}[(i)]
\item $\|\varphi_{n,t}(b)^*-\varphi_{n,t^{-1}}(b^*)\|\stackrel{n\to\infty}{\too}0$,
\item $\|\varphi_{n,s}(a)\varphi_{n,t}(b)-\varphi_{n,st}(ab)\|\stackrel{n\to\infty}{\too}0$. 
\end{enumerate}

An approximate representation $(\varphi_n)_{n\geq1}$ is said to be \emph{faithful} if
\[\|\varphi_{n,t}(b)\|\too\|b\|\quad\forall t\in G,\ b\in B_t.\]
\end{definition}

We will see in Proposition~\ref{prop: MF bundle} below that given a faithful approximate representation $\big(\phi_n:=\big\{\phi_{n,t}:B_t\to M_n\big\}_{t\in G}\big)_n$, we actually obtain a faithful approximate representation $\big(\varphi_n:=\big\{\varphi_{n,t}:B_t\to M_n\big\}_{t\in G}\big)_n$ satisfying
\[\varphi_{n,t}(b)^*=\varphi_{n,t^{-1}}(b^*)\quad\forall n\geq1,\ t\in G,\ b\in B_t.\] 
It is useful, however, to allow for the approximate condition (i) in Definition~\ref{def: approx rep}, to give flexibility when working with examples, for instance, in the case of MF partial actions studied below (see Definition~\ref{def: MF partial action} and Theorem~\ref{thm: MF crossed products}).

\begin{proposition}\label{prop: MF bundle}
Let $\cB=\{B_t\}_{t\in G}$ be a separable Fell Bundle and write $\cM$ for the class of matrix algebras. The following are equivalent
\begin{enumerate}[(1)]
\item There is a faithful approximate representation of $\cB$ in $\cM$;
\item There is a sequence $(\bM_{k_n})_n$ in $\cM$ and a faithful representation $$\psi:\cB\to\prod_{\omega}\bM_{k_n},$$
\item The bundle $\cB$ is MF.
\end{enumerate}
\end{proposition}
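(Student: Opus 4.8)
The plan is to treat the norm ultraproduct $\prod_{\omega}\bM_{k_n}$ as the hub: statement (2) is the exact (algebraic) incarnation of the finitary statements (1) and (3), and moving between them means trading asymptotic relations for genuine relations in the ultraproduct. I would establish the two equivalences $(1)\Leftrightarrow(2)$ and $(2)\Leftrightarrow(3)$, of which three of the four implications are essentially formal and one is the real work. For $(1)\Rightarrow(2)$, given a faithful approximate representation $\big(\varphi_n=\{\varphi_{n,t}\}_t\big)_n$ in $\cM$, I set $\psi_t(b):=\pi_\omega\big((\varphi_{n,t}(b))_n\big)$; faithfulness forces $(\varphi_{n,t}(b))_n$ to be bounded, so each $\psi_t$ is well defined and linear, conditions (i) and (ii) of Definition~\ref{def: approx rep} become the exact relations $\psi_t(b)^*=\psi_{t^{-1}}(b^*)$ and $\psi_s(a)\psi_t(b)=\psi_{st}(ab)$ (the error sequences lie in $\bigoplus_\omega\bM_{k_n}$), and $\|\psi_t(b)\|=\lim_\omega\|\varphi_{n,t}(b)\|=\|b\|$ shows $\psi_e$ is isometric, whence $\psi$ is a faithful representation. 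For $(2)\Rightarrow(3)$, I fix a linear lift $\Phi_t=(\Phi_{n,t})_n$ of each $\psi_t$ through $\pi_\omega$ (linear lifts always exist); given a finite $\Omega\subseteq B$ and $\varepsilon>0$, each of the finitely many defect- and norm-discrepancy quantities over $\Omega$ has $\omega$-limit $0$, so the set of $n$ on which all are $<\varepsilon$ is a finite intersection of $\omega$-large sets and therefore nonempty, and any such $n$ furnishes the maps witnessing Definition~\ref{def: MF bundle}.

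For $(3)\Rightarrow(2)$ I build the representation by continuity inside the $C^*$-algebra $\prod_\omega\bM_{k_n}$. Choose a countable set $D=\bigsqcup_t D_t$ with each $D_t\subseteq B_t$ dense and $D$ closed under the bundle multiplication and involution, and an increasing sequence of finite, $*$-symmetric sets $\Omega_n\subseteq B$ exhausting $D$. Applying the MF property with $\Omega_n$ and $\varepsilon_n=1/n$ gives maps $\varphi_{n,t}:B_t\to\bM_{d_n}$, and for $a,b\in D$ the sequences $(\varphi_{n,t}(b))_n$ are eventually norm-controlled by (iii), so $\psi_t(b):=\pi_\omega\big((\varphi_{n,t}(b))_n\big)$ defines a linear, isometric map on $D_t$ satisfying the representation relations there (conditions (i),(ii) hold with ordinary limit $0$ on the dense set). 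In particular $\psi_e$ is an isometric $*$-homomorphism on the dense $*$-subalgebra $D_e$ and extends to all of $B_e$; the remaining $\psi_t$ extend by isometry, and the relations extend by continuity of multiplication and involution. Faithfulness of the extended $\psi$ is automatic once $\psi_e$ is isometric, as in Fact~\ref{fact: reps are contractive}. Crucially, this step never requires the $\varphi_{n,t}$ to be bounded off $D$, because the extension takes place on the side of the target $C^*$-algebra.

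The crux is $(2)\Rightarrow(1)$, which is where the genuine obstacle lies: from a single representation into $\prod_\omega\bM_{k_n}$ I must manufacture an honest sequence of finite-dimensional maps whose defects and norm-distortions tend to $0$ in the \emph{ordinary} limit and, simultaneously, on the \emph{whole} of each fiber $B_t$ (Definition~\ref{def: approx rep} demands both). Separability reduces the first issue to the countable subalgebra $D$: using a $*$-linear lift of $\psi$ and a diagonal reindexing, I can pass to a subsequence along which all the countably many $\omega$-vanishing quantities (defects and norm discrepancies over $a,b\in D$) vanish in the ordinary sense. The second issue—propagating these conditions and the faithfulness from $D$ to all of $B_t$—requires the approximating maps to be uniformly bounded, and this is the only non-formal point. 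I would resolve it by arranging the lifts to be contractive: each $\psi_t$ is completely contractive, since the computation in Fact~\ref{fact: reps are contractive} applies verbatim at every matrix amplification, so Wittstock's extension theorem (injectivity of $\bM_{k_n}$) produces completely contractive lifts $B_t\to\bM_{k_n}$ of the finite-dimensional pieces. Equicontinuity then lets a standard three-$\varepsilon$ estimate carry (i), (ii) and $\|\varphi_{n,t}(b)\|\to\|b\|$ from the dense subalgebra to arbitrary $b\in B_t$, completing $(2)\Rightarrow(1)$.

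Finally, I would record the refinement promised after Definition~\ref{def: approx rep}: given any faithful approximate representation, the symmetrization $\varphi_{n,t}(b)\mapsto\tfrac12\big(\varphi_{n,t}(b)+\varphi_{n,t^{-1}}(b^*)^*\big)$ produces one satisfying $\varphi_{n,t}(b)^*=\varphi_{n,t^{-1}}(b^*)$ exactly, since (i) guarantees the modification is an asymptotically vanishing perturbation that preserves (ii) and faithfulness. The main difficulty throughout is thus not any single relation but the uniform (equicontinuity) control needed to move from a countable dense subalgebra to all of the fibers; everything else is bookkeeping with $\omega$-large sets and continuity in the ultraproduct.
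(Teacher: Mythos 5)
Your implications $(1)\Rightarrow(2)$, $(2)\Rightarrow(3)$ and $(3)\Rightarrow(2)$ are correct and essentially coincide with the paper's arguments (the paper runs the cycle $(1)\Rightarrow(2)\Rightarrow(3)\Rightarrow(1)$; your $(3)\Rightarrow(2)$ is the first half of its $(3)\Rightarrow(1)$, with the ultraproduct in place of the sequential quotient). The genuine gap is in $(2)\Rightarrow(1)$, at precisely the step you call the only non-formal point. Wittstock's extension theorem (injectivity of $\bM_{k_n}$) extends completely contractive maps defined on a subspace of a larger operator space; it says nothing about lifting a map through the quotient $\pi_\omega:\prod_n\bM_{k_n}\to\prod_\omega\bM_{k_n}$. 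Lifting is the dual, projectivity-type problem, and the theorems that do produce lifts (e.g.\ Choi--Effros) place hypotheses on the domain (nuclearity), not on injectivity of the target's factors. Worse, the completely contractive lifts you want do not exist in general. Take the bundle over the trivial group with $B_e=\cstarl{\bF_2}$: statement (2) holds because $\cstarl{\bF_2}$ is MF \cite{FreeGroupMF}, but a completely contractive lift of the resulting embedding $\psi_e:B_e\to\prod_\omega\bM_{k_n}$ would yield $\ast$-linear, completely contractive, asymptotically multiplicative and asymptotically isometric maps $B_e\to\bM_{k_n}$; compressing by spectral projections of the elements $\varphi_{n,e}(1)$ makes these asymptotically unital, and the Wittstock/Paulsen decomposition then perturbs an almost-unital, completely contractive, $\ast$-linear map to a nearby c.p.c.\ map, so $\cstarl{\bF_2}$ would be quasidiagonal --- contradicting Rosenberg's theorem \cite{HadRQD}, since $\bF_2$ is not amenable. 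So your $(2)\Rightarrow(1)$ is not proved (even though the implication itself is true), and since your architecture reaches (1) only through this implication, the equivalence with (1) is not established.

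The paper's way around the uniform-boundedness obstruction you correctly isolated is to never lift with norm control. In its $(3)\Rightarrow(1)$ it works in the sequential quotient $Q:=\prod_n\bM_{k_n}/\bigoplus_n\bM_{k_n}$: the MF data give isometric $\bQ[i]$-linear maps on countable dense subspaces $D_t\subseteq B_t$, these extend by continuity \emph{inside $Q$} to an honest representation $\{\Psi_t:B_t\to Q\}_{t\in G}$ on the full fibers, and only then are arbitrary $\ast$-linear lifts $\varphi_t$ of $\Psi_t$ chosen (these always exist, with no bound whatsoever). Conditions (i)--(ii) of Definition~\ref{def: approx rep} then hold for \emph{all} $a\in B_s$, $b\in B_t$, not merely on dense sets, because $\|\pi((x_n)_n)\|=\limsup_n\|x_n\|$, so the defect sequences of the exact maps $\varphi_t$ tend to zero outright. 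Faithfulness on the whole of $B_e$ is obtained by noting that for \emph{every} free ultrafilter $\omega$ the composition $\pi_\omega\circ\varphi_e$ is a $\ast$-homomorphism which is isometric on the dense set $D_e$, hence isometric everywhere; as this holds for all $\omega$, one gets $\lim_n\|\varphi_{n,e}(a)\|=\|a\|$ for all $a\in B_e$, and the \cstar-identity transports faithfulness to every fiber. All extension and continuity arguments thus happen downstairs in quotient \cstar-algebras, where contractivity is automatic, and no equicontinuity of lifts is ever needed. Replacing your $(2)\Rightarrow(1)$ by this direct $(3)\Rightarrow(1)$ would close the loop with your other three implications.
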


\begin{proof}
(1)$\Rightarrow$(2): Suppose $\big(\varphi_n:=\big\{\varphi_{n,t}:B_t\to\bM_{k_n}\big\}_{t\in G}\big)_n$ is a faithful approximate representation of $\cB$ in $\cM$. For every $t\in G$ and every $b\in B_t$, $\sup_{n\geq1}\|\varphi_{n,t}(b)\|<\infty$, so for any free ultrafilter $\omega$ on $\bN$ we may define a family of linear maps $\psi:=\big\{\psi_t:B_t\to\prod_{\omega}\bM_{k_n}\big\}$, where 
\[\psi_t:B_t\to\prod_{\omega}\bM_{k_n};\quad\psi_t(b)=\pi_\omega\big((\varphi_{n,t}(b))_n\big).\]
Clearly $\psi$ is a faithful representation.

(2)$\Rightarrow$(3): Consider a faithful representation \[\big\{\psi_t:B_t\to\prod_{\omega}\bM_{k_n}\big\}_{t\in G},\]
and suppose $\Omega\subseteq B$ is finite and $\varepsilon>0$. For each $t\in G$ let
$\phi_t:B_t\to\prod_{n\geq1}\bM_{k_n}$ be any linear lift of $\psi_t$. We then set 
\begin{equation}\label{eq: *-linear}
\varphi_t:B_t\to\prod_{n\geq1}\bM_{k_n};\quad\varphi_t(b):=\frac{1}{2}(\phi_t(b)+\phi_{t^{-1}}(b^*)^*).
\end{equation}
We see that $\varphi_t$ is also a linear lift of $\psi_t$ and $\varphi_{t}(b)^*=\varphi_{t^{-1}}(b^*)$ for all $t\in G$ and $b\in B_t$. Now put $\varphi_{t,n}=\pi_n\circ\varphi_t:B_t\to\bM_{k_n}$, where $\pi_n$ are the canonical coordinate projections. Now given $a\in\Omega_s$ and $b\in\Omega_t$, the sets
\[M_{a,b}=\big\{n\in\bN\mid\|\varphi_{n,s}(a)\varphi_{n,t}(b)-\varphi_{n,st}(ab)\|<\varepsilon\big\}\]
and
\[I_a=\big\{n\in\bN\mid\big|\|\varphi_{n,s}(a)\|-\|a\|\big|<\varepsilon\big\}\]
belong to the ultrafilter $\omega$. Therefore, so does the set \[T=\bigcap_{a,b\in\Omega}M_{a,b}\cap\bigcap_{a\in\Omega}I_a\in\omega\]
Choosing an $m$ in this set, the family of linear maps $\varphi:=\big\{\varphi_{m,t}:B_t\to\bM_{k_m}\big\}_{t\in G}$ is what is desired to make $\cB$ MF.

(3)$\Rightarrow$(1): The separable condition allows us to build a sequence $(\Omega_n)_{n}$ of finite subsets of the total space $B$ with certain desired properties. For each $t\in G$, let $C_t\subseteq B_t$ be a countable dense subset, and set
\[W:=\bigg\{x_1x_2\dots x_n\mid n\in\bN,\ x_j\in \bigcup_{t\in G}(C_t\cup C_t^*)\bigg\}\subseteq B.\]
Note that $W$ is countable with $W\cdot W\subseteq W$, $W^*=W$, and $W\supseteq C_t$ for every $t\in G$. Next, for each $t\in G$ put $W_t:=W\cap B_t$ and
\[D_t:=\bQ[i]\text{-}\Span(W_t).\]
Clearly $D_{t}$ is a countable linear space over the complex rationals $\bQ[i]$ with $D_t\subseteq B_t$ norm-dense. Moreover, $D_t^*=D_{t^{-1}}$ and $D_sD_t\subseteq D_{st}$ for all $s,t\in G$. In particular, $D_e$ is a $\ast$-algebra over $\bQ[i]$. Next, for each $t\in G$ we enumerate $D_t=\{d_j^t\}_{j=1}^{\infty}$. Also, since $G$ is countable we may find an increasing sequence $\{F_k\}_{k=1}^\infty$ of symmetric subsets $F_k\subseteq G$ with $F_k^2\subseteq F_{k+1}$. Now for each $k\geq1$ we have the finite set
\[\Sigma_k:=\bigcup_{t\in F_k}\{d_j^t\}_{j=1}^k.\]
\begin{itemize}
	\item Put $\Omega_1:=\Sigma_1\cup\Sigma_1^*$.
	\item $\Lambda_2:=\Sigma_2\cup\Omega_1\cup\Omega_1^2$, and $\Omega_2:=\Lambda_2\cup\Lambda_2^*$.
	\item[] $\vdots$
	\item $\Lambda_{n}:=\Sigma_{n}\cup\Omega_{n-1}\cup\Omega_{n-1}^2$, and $\Omega_{n}:=\Lambda_{n}\cup\Lambda_{n}^*$.
\end{itemize} 
The sequence $(\Omega_n)_{n\geq1}$ of finite subsets of the total space $B$ has the following properties:
\begin{enumerate}[(a)]
	\item $\Omega_1\subseteq\Omega_2\subseteq\Omega_3\cdots$,
	\item $\Omega_n^*=\Omega_n$ for all $n\geq1$, and
	\item for every $t\in G$,  $\bigcup_{n\geq1}\Omega_{n,t}\supseteq D_t$, where $\Omega_{n,t}:=\Omega_n\cap B_t$.
	\item $\Omega_{n,s}\Omega_{n,t}\subseteq\Omega_{n+1,st}$.
\end{enumerate}

Now let $(\varepsilon_n)_{n\geq1}$ be a decreasing sequence of positive numbers with $(\varepsilon_n)_{n}\to0$.
Since $\cB$ is MF every $n\geq1$ will yield a family of linear maps $\big\{\psi_{n,t}:B_t\to\bM_{k_n}\big\}_{t\in G}$ satisfying: for every $a\in\Omega_{n,s}$ and $b\in\Omega_{n,t}$,
\begin{enumerate}[(i)]
\item  $\|\psi_{n,t}(b)^*-\psi_{n,t^{-1}}(b^*)\|<\varepsilon_n$, 
\item  $\|\psi_{n,s}(a)\psi_{n,t}(b)-\psi_{n,st}(ab)\|<\varepsilon_n$, and 
\item  $\|b\|-\varepsilon_n<\|\psi_{n,t}(b)\|<\|b\|+\varepsilon_n$.
\end{enumerate}

Fix $t\in G$. By (iii) we have $\lim_{n\to\infty}\|\psi_{n,t}(b)\|=\|b\|$. This induces a $\bQ[i]$-linear isometry
\[\psi_t:D_t\too\frac{\prod_{n}\bM_{k_n}}{\oplus_{n}\bM_{k_n}};\quad\psi_t(b)=\pi\big((\psi_{n,t}(b))_n\big).\]
Properties (i) and (ii) ensure that for all $a\in D_s$ and $b\in D_t$
\begin{equation}\label{eq: psi a rep}
\psi_s(a)\psi_t(a)=\psi_{st}(ab),\ \text{ and  }\  \psi_t(b)^*=\psi_{t^{-1}}(b^*).
\end{equation}
By continuity we extend $\psi_t$ to a $\bC$-linear isometry  $\Psi_t:B_t\to\prod_{n}\bM_{k_n}/\oplus_{n}\bM_{k_n}$. A standard $\varepsilon/3$-argument along with~\eqref{eq: psi a rep} ensure that \[\bigg\{\Psi_t:B_t\too\frac{\prod_{n}\bM_{k_n}}{\oplus_{n}\bM_{k_n}}\bigg\}_{t\in G}\] is a representation of $\cB$. For each $t$, let $\phi_t:B_t\to\prod_{n\geq1}\bM_{k_n}$ be a linear lift of $\Psi_t$, and put $\varphi_t:B_t\to\prod_{n\geq1}\bM_{k_n}$ as in~\eqref{eq: *-linear}.
Now set $\varphi_{n,t}:=\pi_n\circ\varphi_t:B_t\to\bM_{k_n}$, where $\pi_n$ are the coordinate projections. We claim that the sequence
\[\bigg(\varphi_n:=\big\{\varphi_{n,t}:B_t\to\bM_{k_n}\big\}_{t\in G}\bigg)_n\]
is the faithful approximate representation we are looking for. Indeed, for $b\in B_t$ then
\[\varphi_{n,t}(b)^*=\pi_n(\varphi_t(b))^*=\pi_n(\varphi_t(b)^*)=\pi_n(\varphi_{t^{-1}}(b^*))=\varphi_{n,{t^{-1}}}(b^*).\]
Next, for $a\in B_s$ and $b\in B_t$,
\begin{align*}
\limsup_{n\to\infty}&\|\varphi_{n,s}(a)\varphi_{n,t}(b)-\varphi_{n,st}(ab)\|=\limsup_{n\to\infty}\|\pi_n(\varphi_s(a))\pi_n(\varphi_t(b))-\pi_n(\varphi_{st}(ab))\|\\
&=\limsup_{n\to\infty}\|\pi_n\big(\varphi_s(a)\varphi_t(b)-\varphi_{st}(ab)\big)\|
=\|\pi\big(\varphi_s(a)\varphi_t(b)-\varphi_{st}(ab)\big)\|\\&=\|\pi(\varphi_s(a))\pi(\varphi_t(b))-\pi(\varphi_{st}(ab))\|
=\|\Psi_s(a)\Psi_t(b)-\Psi_{st}(ab)\|=0,
\end{align*}
so $\|\varphi_{n,s}(a)\varphi_{n,t}(b)-\varphi_{n,st}(ab)\|\too0$ and $(\varphi_n)_n$ is an approximate representation. 

Now let $\omega$ be any free ultrafilter on $\bN$. The sequence $(\varphi_{n,e})_n$ induces a $\ast$-homomorphism
\[\phi_\omega:B_{e}\too\prod_{\omega}\bM_{n_k};\quad\phi_\omega(a)=\pi_\omega\big((\varphi_{n,e}(a))_n\big).\]
Since $\|\varphi_{n,e}(a)-\psi_{n,e}(a)\|\too0$ for every $a\in B_e$ we see that $\lim_{n\to\infty}\|\varphi_{n,e}(a)\|=\|a\|$ for every $a\in D_e$, thus $\phi_\omega|_{D_e}$ is isometric. It follows that $\phi_\omega$ is isometric on $B_e$, hence for all $a\in B_e$ we have
\[\lim_{\omega}\|\varphi_{n,e}(a)\|=\|a\|.\]
This holds for all free ultrafilters $\omega$, so indeed 
\[\lim_{n\to\infty}\|\varphi_{n,e}(a)\|=\|a\|.\]
Now if $b\in B_t$,
\begin{align*}
\big|\|\varphi_{n,t}(b)\|^2-\|b\|^2\big|&=\big|\|\varphi_{n,t}(b)^*\varphi_{n,t}(b)\|-\|b^*b\|\big|\\&
\leq\big|\|\varphi_{n,t^{-1}}(b^*)\varphi_{n,t}(b)\|-\|\varphi_{n,e}(b^*b)\|\big|+\big|\|\varphi_{n,e}(b^*b)\|-\|b^*b\|\big|\\
&\leq\big|\|\varphi_{n,t^{-1}}(b^*)\varphi_{n,t}(b)-\varphi_{n,e}(b^*b)\|\big|+\big|\|\varphi_{n,e}(b^*b)\|-\|b^*b\|\big|
\end{align*}
which tends to zero as $n\to\infty$. Thus $(\varphi_n)_n$ is faithful.
\end{proof}

Having shown that the MF condition in a bundle $\cB$ is necessary for the reduced cross-sectional algebra to be MF (\ref{prop: MF alg implies MF bundle}), we prove a converse under the assumption that our group is exact and has an MF reduced \cstar-algebra. Recall that a group $G$ is exact if its reduced group \cstar-algebra $\cstarl{G}$ is exact.

\begin{theorem}\label{thm: MF C*-bundle}
Let $\cB=\{B_t\}_{t\in G}$ be a separable Fell bundle over an exact group $G$. If $\cB$ and $\cstarl{G}$ are MF, then the \cstar-algebra $\cstarl{\cB}$ is MF.
\end{theorem}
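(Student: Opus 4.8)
The plan is to realize $\cstarl{\cB}$ as a \cstar-subalgebra of a minimal tensor product that can be shown to be MF, exploiting Fell's absorption together with the characterization of MF bundles from Proposition~\ref{prop: MF bundle}. Since $\cB$ is separable and MF, that proposition supplies a \emph{faithful} representation $\Psi=\{\Psi_t:B_t\to Q\}_{t\in G}$ of $\cB$ into the matrix sequence algebra $Q:=\prod_{n}\bM_{k_n}/\oplus_{n}\bM_{k_n}$. Let $M_0\subseteq Q$ be the separable \cstar-subalgebra generated by $\bigcup_{t}\Psi_t(B_t)$; since $M_0$ embeds into a matrix sequence algebra it is itself MF. Applying Corollary~\ref{cor: Fell cor} to the faithful representation $\Psi:\cB\to M_0$ produces a faithful (hence isometric) $\ast$-homomorphism $\cstarl{\cB}\hookrightarrow M_0\otimes\cstarl{G}$. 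Because MF passes to separable \cstar-subalgebras, it suffices to prove that the separable algebra $M_0\otimes\cstarl{G}$ is MF.

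This last point is where exactness enters, and I expect it to be the main obstacle. First, injectivity of the minimal tensor product in the coefficient variable reduces matters to embedding $\cstarl{G}\otimes Q$ into an MF algebra. Tensoring the defining exact sequence $0\to\oplus_n\bM_{k_n}\to\prod_n\bM_{k_n}\to Q\to 0$ with $\cstarl{G}$ and using exactness of $\cstarl{G}$ (equivalently, of $G$) keeps the sequence exact, so that $\cstarl{G}\otimes Q=(\cstarl{G}\otimes\prod_n\bM_{k_n})/(\cstarl{G}\otimes\oplus_n\bM_{k_n})$. Independently, the canonical $\ast$-homomorphism $\cstarl{G}\otimes\prod_n\bM_{k_n}\to\prod_n(\cstarl{G}\otimes\bM_{k_n})$ is injective for purely spatial reasons (represent $\prod_n\bM_{k_n}$ block-diagonally and note that elementary tensors act block-diagonally), and, since the minimal tensor product commutes with $c_0$-sums, it carries $\cstarl{G}\otimes\oplus_n\bM_{k_n}$ isomorphically onto $\oplus_n(\cstarl{G}\otimes\bM_{k_n})$. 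Combining these two facts, the induced map
\[
\cstarl{G}\otimes Q\too\prod_n C_n\big/\oplus_n C_n,\qquad C_n:=\bM_{k_n}\otimes\cstarl{G}=\bM_{k_n}(\cstarl{G}),
\]
is injective, whence $\cstarl{G}\otimes M_0\hookrightarrow\cstarl{G}\otimes Q$ embeds into $\prod_n C_n/\oplus_n C_n$.

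It remains to observe that this target algebra is ``locally MF.'' Each coefficient algebra $C_n=\bM_{k_n}(\cstarl{G})$ is a matrix amplification of the MF algebra $\cstarl{G}$, hence MF, say $C_n\hookrightarrow\prod_m\bM_{d^n_m}/\oplus_m\bM_{d^n_m}$. A diagonal (reindexing) argument then shows that every separable subalgebra $D$ of $\prod_n C_n/\oplus_n C_n$ is MF: fixing a lift $(c_n)_n$ of a countable generating set and, for each $n$, an approximately multiplicative and approximately isometric linear map $\theta_n:C_n\to\bM_{s_n}$ that is $\tfrac1n$-good on the $n$-th finite portion of the data, one assembles $\Phi=\prod_n\theta_n:\prod_n C_n\to\prod_n\bM_{s_n}$, checks it descends modulo the respective $c_0$-sums, and verifies directly that the induced map restricts to an isometric, asymptotically multiplicative and self-adjoint embedding of $D$ into $\prod_n\bM_{s_n}/\oplus_n\bM_{s_n}$. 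Applying this to the separable algebra $\cstarl{G}\otimes M_0$ gives that it is MF, and therefore so is its subalgebra $\cstarl{\cB}$.

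The genuinely substantive steps are thus isolated in the second and third paragraphs: exactness of $\cstarl{G}$ is used \emph{only} to preserve exactness of the sequence arising from $0\to\oplus_n\bM_{k_n}\to\prod_n\bM_{k_n}\to Q\to 0$ after tensoring, whereas the injectivity of $\cstarl{G}\otimes\prod_n\bM_{k_n}\to\prod_n\bM_{k_n}(\cstarl{G})$ and the final diagonalization are general and need no exactness. Separability is used throughout, so that the MF property is detected by a single matrix sequence algebra and so that the reindexing yields a bona fide embedding.
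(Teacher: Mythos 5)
Your argument is correct, and its skeleton is the same as the paper's: Proposition~\ref{prop: MF bundle} produces a faithful representation of $\cB$ into a matrix sequence (in the paper, ultraproduct) algebra, Fell's absorption (Corollary~\ref{cor: Fell cor}) then embeds $\cstarl{\cB}$ into the minimal tensor product of that algebra with $\cstarl{G}$, and one finishes because MF passes to \cstar-subalgebras. Where you genuinely diverge is the key tensor step: the paper disposes of it in one line by citing Proposition~3.6 of~\cite{RaSc2019} (the minimal tensor product of an MF algebra with an exact MF algebra is MF), whereas you prove the needed instance from scratch. Your three ingredients are sound: exactness of $\cstarl{G}$ keeps $0\to\cstarl{G}\otimes\oplus_n\bM_{k_n}\to\cstarl{G}\otimes\prod_n\bM_{k_n}\to\cstarl{G}\otimes Q\to0$ exact; the canonical map $\cstarl{G}\otimes\prod_n\bM_{k_n}\to\prod_n\bM_{k_n}(\cstarl{G})$ is injective and carries $\cstarl{G}\otimes\oplus_n\bM_{k_n}$ \emph{onto} the full ideal $\oplus_n\bM_{k_n}(\cstarl{G})$ --- this surjectivity onto the ideal is exactly what makes your induced map on quotients injective, so the "combining" step is legitimate; and the diagonal argument showing separable subalgebras of $\prod_n C_n/\oplus_n C_n$ are MF is the standard Blackadar--Kirchberg reindexing, mirroring the paper's own proof of (3)$\Rightarrow$(1) in Proposition~\ref{prop: MF bundle}. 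One point worth making explicit there: the maps $\theta_n:C_n\to\bM_{s_n}$ furnished by MF-ness are norm-controlled only on finite sets, so $\prod_n\theta_n$ is not a bounded map on all of $\prod_n C_n$; you must evaluate it only on the countable bounded family of lifts you fixed (which is what your phrase "restricts to" implicitly does). In terms of trade-offs: the paper's route is shorter and defers the analytic content to the cited result, while yours is self-contained, works with the sequence algebra rather than an ultraproduct, and isolates precisely where exactness enters --- in effect reproving the special case of~\cite{RaSc2019} that the theorem actually requires.
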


\begin{proof}
Assuming that $\cB$ is an MF bundle, Proposition~\ref{prop: MF bundle} gives a faithful representation
\[\pi:\cB\to\prod_{\omega}\bM_{k_n}.\]
Corollary~\ref{cor: Fell cor} now ensures an embedding:
\[\cstarl{\cB}\hookrightarrow\bigg(\prod_{\omega}\bM_{k_n}\bigg)\otimes\cstarl{G}.\]
Since $G$ is exact and MF, $\cstarl{G}$ is exact and MF, and so the minimal tensor product $\prod_{\omega}\bM_{k_n}\otimes\cstarl{G}$ is MF by Proposition 3.6 in~\cite{RaSc2019}. Finally, the MF property passes to subalgebras, so $\cstarl{\cB}$ is MF.
\end{proof}

We can now characterize quasidiagonality in reduced cross-sectional \cstar-algebras. As with crossed products, we need the unit fiber \cstar-algebra $B_e$ to be nuclear (cf. Theorem 3.19 in~\cite{Rain2014}).

\begin{theorem}\label{thm: QD C*bundle algebras}
Let $\cB=\{B_t\}_{t\in G}$ be a separable Fell bundle with $B_e$ nuclear. If $\cstarl{G}$ is quasidiagonal and $\cB$ is MF, then $\cstarl{\cB}$ is quasidiagonal.

If $G$ is amenable and $\cB$ is MF, then $\cstarl{\cB}$ is quasidiagonal.
\end{theorem}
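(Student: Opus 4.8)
The plan is to show that under these hypotheses $\cstarl{\cB}$ is simultaneously MF \emph{and} nuclear, and then to upgrade the MF property to quasidiagonality via the Choi--Effros lifting theorem recalled in the preliminaries. A word of caution motivates this route. Although the MF assumption together with Proposition~\ref{prop: MF bundle} and Corollary~\ref{cor: Fell cor} already embeds $\cstarl{\cB}$ into $\big(\prod_{\omega}\bM_{k_n}\big)\otimes\cstarl{G}$, one cannot conclude quasidiagonality simply by passing to a subalgebra, because the ambient algebra $\prod_{\omega}\bM_{k_n}$ is far from quasidiagonal: it contains non-quasidiagonal \cstar-subalgebras such as $\cstarl{\bF_2}$ (which is MF but not QD, since $\bF_2$ is not amenable). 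The nuclearity of $\cstarl{\cB}$ itself must therefore be brought to bear, and this is where the standing hypothesis that $B_e$ is nuclear, and the amenability forced by quasidiagonality of $\cstarl{G}$, enter decisively.

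First I would establish that $\cstarl{\cB}$ is MF. Since $\cstarl{G}$ is quasidiagonal it is in particular MF; moreover, by Rosenberg's theorem a discrete group whose reduced \cstar-algebra is quasidiagonal is amenable, so $G$ is amenable and hence exact. Thus all hypotheses of Theorem~\ref{thm: MF C*-bundle} are satisfied ($G$ exact, $\cstarl{G}$ MF, and $\cB$ MF by assumption), and that theorem yields that $\cstarl{\cB}$ is MF. Concretely, this supplies a faithful $\ast$-homomorphism $\Phi\colon\cstarl{\cB}\hookrightarrow\prod_{n}\bM_{k_n}\big/\bigoplus_{n}\bM_{k_n}$.

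Next I would establish nuclearity of $\cstarl{\cB}$. Because $G$ is amenable, every Fell bundle over $G$ enjoys Exel's approximation property (\cite{ExFellAmen}); combining this with the hypothesis that the unit fibre $B_e$ is nuclear, the Abadie--Buss--Ferraro criterion (\cite{AbBuFeAmenFell}) gives that $\cstarl{\cB}$ is nuclear. Now $\Phi$ is a c.p.c. map from the separable nuclear algebra $\cstarl{\cB}$ into the quotient of $\prod_{n}\bM_{k_n}$ by $\bigoplus_{n}\bM_{k_n}$, so Choi--Effros provides a c.p.c. lift $\Psi\colon\cstarl{\cB}\to\prod_{n}\bM_{k_n}$ with quotient $\Phi$. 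A faithful embedding into $\prod_{n}\bM_{k_n}/\bigoplus_{n}\bM_{k_n}$ admitting a c.p.c. lift is exactly the characterization of quasidiagonality recalled before Definition~\ref{def: MF bundle}, so $\cstarl{\cB}$ is QD. The second assertion then reduces to the first: for a discrete \emph{amenable} group $G$ the reduced \cstar-algebra $\cstarl{G}$ is quasidiagonal (the converse of Rosenberg's theorem, which follows from the Tikuisis--White--Winter quasidiagonality theorem), and applying the first statement completes the proof. The main obstacle is precisely the nuclearity step: the passage from MF to QD is otherwise false, and securing nuclearity of $\cstarl{\cB}$ is what requires both the amenability of the bundle (hence of $G$) and the nuclearity of $B_e$, explaining why these are exactly the hypotheses imposed.
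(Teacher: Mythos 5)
Your proposal is correct and follows essentially the same route as the paper: Rosenberg's theorem gives amenability of $G$, Exel's approximation property plus the Abadie--Buss--Ferraro result gives nuclearity of $\cstarl{\cB}$, Theorem~\ref{thm: MF C*-bundle} gives the MF property, and the second assertion reduces to the first via Tikuisis--White--Winter. The only (harmless) difference is that where the paper simply cites Blackadar--Kirchberg for ``nuclear $+$ MF $\Rightarrow$ QD,'' you inline the standard proof of that fact by applying Choi--Effros to lift the faithful embedding $\cstarl{\cB}\hookrightarrow\prod_{n}\bM_{k_n}\big/\bigoplus_{n}\bM_{k_n}$ to a c.p.c.\ map into $\prod_{n}\bM_{k_n}$.
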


\begin{proof}
Since $\cstarl{G}$ is QD we know that $G$ is amenable by a result of Rosenberg (\cite{HadRQD}). By Theorem 4.7 in~\cite{ExFellAmen}, the bundle $\cB$ has Exel's approximation property and hence is amenable. Since $B_e$ is nuclear, Theorem 7.2 of~\cite{AbBuFeAmenFell} states that $\cstarl{\cB}$ is nuclear. Also, by Theorem~\ref{thm: MF C*-bundle} we know that $\cstarl{\cB}$ is MF. Finally, recall that nuclear MF algebras are quasidiagonal (\cite{BK97}).

As for the last statement, if $G$ is amenable, then $\cstarl{G}$ is quasidiagonal (\cite{SchafTWW},~\cite{TWWQD}).
\end{proof}

We now restrict our attention to partial \cstar-dynamical systems. Given such a system $\alpha:G\to\pAut(A)$, we want to express the MF (QD) condition in the associated Fell bundle $\cA_\alpha$ dynamically. This leads us to the notion of an MF (QD) partial action, generalizing the MF (QD) global actions defined in~\cite{KerrNowak} and~\cite{Rain2014}.

\begin{definition}\label{def: MF partial action}
Let $\alpha:G\to\pAut(A)$ be a partial \cstar-dynamical system with partial automorphisms $\big\{\alpha_t:D_{t^{-1}}\to D_t\big\}_{t\in G}$. We say that $\alpha$ is an MF action if:\\
for every $\varepsilon>0$, $F\subseteq G$ finite, and $\Omega\subseteq\bigcup_{t\in F}D_t$ finite, there is a $d\in\bN$, a linear map $\varphi:A\to\bM_d$, and a map $v:G\to\bM_d$ satisfying: for all $s,t\in F$, and $x,y\in\Omega$,
\begin{enumerate}[(i)]
	\item $\|\varphi(x)^*-\varphi(x^*)\|<\varepsilon$,
	\item $\|\varphi(xy)-\varphi(x)\varphi(y)\|<\varepsilon$, and
	\item $\big|\|\varphi(x)\|-\|x\|\big|<\varepsilon$.
	\item $v(e)=1$,
	\item $\sup_{t\in F}\|v_t\|\leq1$,
	\item $\|v_t^*-v_{t^{-1}}\|<\varepsilon$,
	\item $\|v_{s^{-1}}v_sv_t-v_{s^{-1}}v_{st}\|<\varepsilon$,
	\item $\|v_t\varphi(a)v_t^*-\varphi(\alpha_t(a))\|<\varepsilon$ for all $a\in\Omega_{t^{-1}}$
\end{enumerate}
If we further require the map $\varphi$ to be c.p.c. we will call the action quasidiagonal (QD).
\end{definition}

\begin{lemma}\label{lem: pert lemma}
If $\alpha:G\to\pAut(A)$ is an MF action, we can take each $v_t$ in Definition~\ref{def: MF partial action} to be a partial isometry.
\end{lemma}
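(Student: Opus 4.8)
The plan is to fix the target data $(\varepsilon, F, \Omega)$ and produce partial-isometry witnesses by a functional-calculus perturbation of the contractions $v_t$ supplied by the MF hypothesis. First I would enlarge the index set: put $S := \{e\}\cup F\cup F^{-1}\cup\{st\mid s,t\in F\}$ and invoke the MF condition of Definition~\ref{def: MF partial action} not on $(\varepsilon,F,\Omega)$ but on $(\varepsilon_0,F',\Omega)$, where $F':=S\cup S^{-1}$ is finite and symmetric and $\varepsilon_0>0$ is a small parameter to be fixed at the end. This yields $\varphi$ and $v:G\to\bM_d$ with conditions (i)--(viii) holding within $\varepsilon_0$ for all $s,t\in F'$. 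The only genuinely new point is to show that each $p_t:=v_t^*v_t$ (for $t\in F'$) is close to a projection; everything else is routine perturbation bookkeeping.

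The key computation uses (vi) and (vii). Since $F'$ is symmetric, for $t\in F'$ both $t$ and $t^{-1}$ lie in $F'$, so condition (vii) with the substitution $s=t$ and free variable $t^{-1}$ gives $\|v_{t^{-1}}v_tv_{t^{-1}}-v_{t^{-1}}\|<\varepsilon_0$, while (vi) gives $\|v_t^*-v_{t^{-1}}\|<\varepsilon_0$. Combining these, and using that each $v_r$ is a contraction by (v), a short telescoping estimate on $p_t^2=v_t^*v_tv_t^*v_t$ yields $\|p_t^2-p_t\|\leq 4\varepsilon_0$. As $p_t$ is positive with $\|p_t\|\leq 1$, its spectrum is forced into $[0,\delta]\cup[1-\delta,1]$ with $\delta=\delta(\varepsilon_0)\to 0$ as $\varepsilon_0\to 0$.

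Next I would apply continuous functional calculus. Let $f$ be the function equal to $0$ on $[0,\delta]$ and to $x^{-1/2}$ on $[1-\delta,1]$ (continuous on $\sigma(p_t)$ because of the spectral gap), and set $w_t:=v_tf(p_t)$ for $t\in F'$ and $w_t:=0$ otherwise. Since $f$ is real, $f(p_t)$ is self-adjoint and $w_t^*w_t=f(p_t)p_tf(p_t)=\chi_{[1-\delta,1]}(p_t)$ is a projection, so each $w_t$ is a partial isometry. Moreover $\|w_t-v_t\|^2=\|(f(p_t)-1)p_t(f(p_t)-1)\|\leq\delta$, estimating $(f(x)-1)^2x$ on the spectrum, so $\|w_t-v_t\|\leq\sqrt\delta$. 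Because $v_e=1$ exactly, $p_e=1$ and $w_e=1$, so (iv) is preserved exactly, and (v) holds since partial isometries are contractions.

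Finally I would check that replacing $v$ by $w$ preserves (vi)--(viii) within $\varepsilon$. Each of these involves only finitely many indices lying in $S\subseteq F'$, at which $w$ differs from $v$ by at most $\sqrt\delta$ in norm; telescoping through products of contractions, and using $\|\varphi(a)\|\leq\|a\|+\varepsilon_0$ from (iii) to control (viii), bounds the extra error by a constant multiple of $\sqrt\delta+\varepsilon_0$. Choosing $\varepsilon_0$ small enough that this bound is below $\varepsilon$ finishes the proof; conditions (i)--(iii) are untouched since $\varphi$ is unchanged, and the same unchanged $\varphi$ handles the QD case. The main obstacle is the spectral estimate $\|p_t^2-p_t\|\leq 4\varepsilon_0$: extracting it requires choosing the enlarged, symmetric index set $F'$ so that $t$, $t^{-1}$, and the relevant products are simultaneously available when invoking (vi) and (vii).
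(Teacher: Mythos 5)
Your proposal is correct and follows essentially the same route as the paper's proof: enlarge $F$ to a finite symmetric set containing the relevant products, use conditions (iv), (vi), (vii) to show $\|v_t^*v_tv_t^*v_t-v_t^*v_t\|$ is small, perturb $v_t$ to a partial isometry with small norm change, and then propagate the estimates after shrinking the tolerance. Your one-step functional calculus $w_t=v_tf(v_t^*v_t)$ is just a repackaging of the paper's two-step construction (spectral projection $p_t$, then $u_t=v_tp_t(p_tv_t^*v_tp_t)^{-1/2}$ in the corner), and in fact produces the same operator.
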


\begin{proof}
Let $\varepsilon>0$, and suppose $F\subseteq G$ and $\Omega\subseteq\bigcup_{t\in F}D_t$ are finite subsets. Apply Definition~\ref{def: MF partial action} with $F$ replaced by a finite symmetric subset $K\subseteq G$ containing $F$ and $F^2$, and $0<\eta<1/8$ to be determined later. We obtain $\varphi:A\to\bM_d$ and $v:G\to\bM_d$ as in the definition, satisfying properties (i) through (viii) therein.

For every $t\in K$ we have
\[\|v_tv_t^*v_t-v_t\|\leq\|v_tv_t^*v_t-v_tv_{t^{-1}}v_t\|+\|v_tv_{t^{-1}}v_t-v_t\|\leq\|v_t^*-v_{t^{-1}}\|+\eta<2\eta. \]
Fix $t\in K$ and set $q_t=v_t^*v_t$. Then $q_t$ is self-adjoint and
\[\|q_t^2-q_t\|=\|v_t^*v_tv_t^*v_t-v_t^*v_t\|\leq\|v_tv_t^*v_t-v_t\|<2\eta.
\]
By a standard perturbation argument there is a projection $p_t\in\bM_d$ with $$\|p_t-q_t\|<4\eta.$$ Now put $w_t:=v_tp_t$. Then $w_t^*w_t$ belongs to the corner $p_t\bM_dp_t$ and
\[\|w_t^*w_t-p_t\|=\|p_tv_t^*v_tp_t-p_t\|\leq\|q_t-p_t\|<4\eta.\]
Therefore $w_t^*w_t$ is invertible in the corner $p_t\bM_dp_t$, so set $x_t:=(w_t^*w_t)^{-1/2}$ where the inverse is computed in $p_t\bM_dp_t$. A short spectral argument gives $\|x_t-p_t\|<4\eta$. Finally, set $u_t:=w_tx_t$. Then
\[u_t^*u_t=(w_tx_t)^*w_tx_t=(w_t^*w_t)^{-1/2}w_t^*w_t(w_t^*w_t)^{-1/2}=p_t,
\]
so $u_t$ is a partial isometry. Moreover,
\begin{align*}
\|u_t-v_t\|&\leq\|u_t-w_t\|+\|w_t-v_t\|
=\|w_tx_t-w_t\|+\|v_tp_t-v_t\|\\
&\leq\|w_t(x_t-p_t)\|+\|v_tp_t-v_tq_t\|+\|v_tq_t-v_t\|\\
&\leq \|x_t-p_t\|+\|p_t-q_t\|+\|v_tv_t^*v_t-v_t\|<10\eta.
\end{align*}
We have the following estimates: for all $t\in K$ we have
\[\|u_t^*-u_{t^{-1}}\|\leq\|u_t^*-v_t^*\|+\|v_t^*-v_{t^{-1}}\|+\|v_{t^{-1}}-u_{t^{-1}}\|<21\eta.\]
Also, for all $s,t\in F$
\[
u_{s^{-1}}u_su_t\stackrel{10\eta}{\approx}v_{s^{-1}}u_su_t\stackrel{10\eta}{\approx}v_{s^{-1}}v_su_t\stackrel{10\eta}{\approx}v_{s^{-1}}v_sv_t\stackrel{\eta}{\approx}v_{s^{-1}}v_{st}\stackrel{10\eta}{\approx}u_{s^{-1}}v_{st}\stackrel{10\eta}{\approx}u_{s^{-1}}u_{st},
\]
so $\|u_{s^{-1}}u_su_t-u_{s^{-1}}u_{st}\|<51\eta$.
Finally, if $C:=\max_{x\in\Omega}\|x\|$, then for $t\in F$ and $a\in\Omega_{t^{-1}}$ we have
\begin{align*}
\|u_t\varphi(a)u_t^*&-\varphi(\alpha_t(a))\|
\leq\\&\|u_t\varphi(a)u_t^*-v_t\varphi(a)u_t^*\|+
\|v_t\varphi(a)u_t^*-v_t\varphi(a)v_t^*\|
+\|v_t\varphi(a)v_t^*-\varphi(\alpha_t(a))\|\\
\leq&2\|u_t-v_t\|\|\varphi(a)\|+\eta<20\eta(C+\eta)+\eta<21\eta(1+C).
\end{align*}
It is a matter of choosing $\eta$ small enough so that $21\eta(1+C)<\varepsilon$ and $51\eta<\varepsilon$.
\end{proof}

As with global continuous actions, residually finite partial actions yield MF actions at the \cstar-level (cf. Proposition 3.3 in~\cite{KerrNowak}).

\begin{proposition}\label{prop: RF implies MF}
Let $\theta:=\big\{\theta_t:U_{t^{-1}}\to U_t\big\}_{t\in G}$ be a continuous partial action of $G$ on a compact metric space $X$. If $\theta$ is residually finite, then the induced partial \cstar-action $\alpha:G\to\pAut(\rC(X))$ is QD.
\end{proposition}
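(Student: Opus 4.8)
The plan is to transport the residually finite data for $\theta$ into a covariant representation on matrices and then precompose with the pullback along $\rho$. Fix $\varepsilon>0$, a finite $F\subseteq G$, and a finite set $\Omega\subseteq\bigcup_{t\in F}D_t$ of functions in $\rC(X)$; enlarging $F$, I may assume it is symmetric and contains $e$. Since $X$ is compact, each $f\in\Omega$ is uniformly continuous, so I fix $\delta>0$ with $d(x,y)<\delta\implies|f(x)-f(y)|<\varepsilon$ for all $f\in\Omega$ and $x,y\in X$. Feeding this $\delta$ and $F$ into Definition~\ref{def: RF action} produces a finite set $Z$, a partial action $\eta=\{\eta_t:V_{t^{-1}}\to V_t\}_{t\in G}$ on $Z$, and a map $\rho:Z\to X$ satisfying (i)--(iii) there. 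Letting $\beta:G\to\pAut(\rC(Z))$ be the dual partial action of $\eta$, I would invoke Proposition~\ref{prop: finite partial systems} to obtain $d=|Z|$ together with a \emph{faithful} covariant representation $(\phi,v):(\rC(Z),G,\beta)\to\bM_d$, where $\phi$ is a $\ast$-monomorphism and $v:G\to\bM_d$ is an honest partial $\ast$-representation. Writing $\overline{\rho}:\rC(X)\to\rC(Z)$, $\overline{\rho}(f)=f\circ\rho$, for the unital $\ast$-homomorphism induced by $\rho$, I set $\varphi:=\phi\circ\overline{\rho}:\rC(X)\to\bM_d$ and keep the same $v$.

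Most of the conditions of Definition~\ref{def: MF partial action} then hold \emph{exactly}. Since $\varphi$ is a composition of $\ast$-homomorphisms it is itself a $\ast$-homomorphism, hence c.p.c.\ (giving the QD refinement for free), and conditions (i) and (ii) hold with error $0$. Because $v$ is a genuine partial $\ast$-representation, $v_e=1$, each $v_t$ is a partial isometry (so $\|v_t\|\leq 1$), $v_t^*=v_{t^{-1}}$, and $v_{s^{-1}}v_sv_t=v_{s^{-1}}v_{st}$, yielding (iv)--(vii) with error $0$. For the norm condition (iii), $\phi$ is isometric, so $\|\varphi(f)\|=\|\overline{\rho}(f)\|=\max_{z\in Z}|f(\rho(z))|\leq\|f\|$, while choosing $y_0\in X$ with $|f(y_0)|=\|f\|$ and (by the $\delta$-density (iii) of Definition~\ref{def: RF action}) a $z$ with $d(\rho(z),y_0)<\delta$ gives $\|\varphi(f)\|\geq|f(\rho(z))|>\|f\|-\varepsilon$; hence $|\,\|\varphi(f)\|-\|f\|\,|<\varepsilon$.

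The only genuinely approximate condition is the covariance (viii), and this is where the \emph{approximate} equivariance of the residually finite action enters. Fix $t\in F$ and $a\in\Omega_{t^{-1}}:=\Omega\cap D_{t^{-1}}$. Condition (i) of Definition~\ref{def: RF action} (using $t^{-1}\in F$) gives $\rho^{-1}(U_{t^{-1}})\subseteq V_{t^{-1}}$, so $\overline{\rho}(a)$ vanishes off $V_{t^{-1}}$, i.e.\ $\overline{\rho}(a)\in\rC_0(V_{t^{-1}})$; covariance of $(\phi,v)$ then yields $v_t\varphi(a)v_t^*=v_t\phi(\overline{\rho}(a))v_t^*=\phi(\beta_t(\overline{\rho}(a)))$. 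As $\phi$ is isometric, it remains to bound $\|\beta_t(\overline{\rho}(a))-\overline{\rho}(\alpha_t(a))\|$. Both functions vanish off $V_t$: the first by definition of $\beta_t$, the second because $\alpha_t(a)\in\rC_0(U_t)$ and $\rho^{-1}(U_t)\subseteq V_t$. Using the dual formulas $\beta_t(g)=g\circ\eta_{t^{-1}}$ and $\alpha_t(f)=f\circ\theta_{t^{-1}}$, for $z\in V_t$ one finds $\beta_t(\overline{\rho}(a))(z)=a(\rho(\eta_{t^{-1}}(z)))$ and $\overline{\rho}(\alpha_t(a))(z)=a(\theta_{t^{-1}}(\rho(z)))$. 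Condition (ii) of Definition~\ref{def: RF action}, applied to $t^{-1}\in F$ and $z\in V_t$, gives $d(\rho(\eta_{t^{-1}}(z)),\theta_{t^{-1}}(\rho(z)))<\delta$, so by the choice of $\delta$ the pointwise difference is $<\varepsilon$; maximizing over $z\in V_t$ yields $\|v_t\varphi(a)v_t^*-\varphi(\alpha_t(a))\|<\varepsilon$, which is (viii). The main (indeed essentially the only) obstacle is precisely this last step: one must carefully match the domains $V_t$ and $V_{t^{-1}}$ against the supports of $\overline{\rho}(a)$ and $\overline{\rho}(\alpha_t(a))$ and align the two ``pull back through $\rho$'' expressions before the approximate equivariance and uniform continuity can be applied. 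Everything else is formal.
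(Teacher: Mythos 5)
Your proposal is correct and follows essentially the same route as the paper's proof: choose $\delta$ by uniform continuity, pull the RF data back through $\overline{\rho}:\rC(X)\to\rC(Z)$, compose with the faithful covariant representation $(\phi,v)$ of Proposition~\ref{prop: finite partial systems}, and convert the approximate equivariance of $\rho$ into the approximate covariance estimate (viii) using that $\phi$ is isometric. Your treatment is in fact slightly more explicit than the paper's in two spots it handles by reference (the $\delta$-density argument for near-isometry on $\Omega$, and the observation that $\varphi$ being a $\ast$-homomorphism yields the c.p.c.\ condition needed for QD), but the underlying argument is identical.
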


\begin{proof}
Let $e\in F\subseteq G$ be a finite symmetric subset and suppose $\Omega\subseteq\bigcup_{t\in F}\rC_0(U_t)$ is finite. Now find a $\delta>0$ such that for all $f\in\Omega$
\[x,y\in X,\ d(x,y)<\delta\implies|f(x)-f(y)|<\varepsilon/3.\]
Since the action is residually finite we obtain a finite set $Z$, a partial action of $G$ on $Z$;  $\eta=\big\{\eta_t:V_{t^{-1}}\to V_t\big\}_{t\in G}$, and a map $\rho:Z\to X$ satisfying the conditions of Definition~\ref{def: RF action}. As in Proposition~\ref{prop: cts RFD implies RFD} we have an induced $\ast$-homomorphism $\overline{\rho}:\rC(X)\to\rC(Z)$ and partial action $\beta:G\to\pAut(\rC(Z))$. Using the same argument we see that $\overline{\rho}(\rC_0(U_t))\subseteq \rC_0(V_t)$ for all $t\in F$, and that $\overline{\rho}$ is almost isometric within $\varepsilon$ on $\Omega$.

Now let $t\in F$ and $f\in\Omega\cap\rC_{0}(U_{t^{-1}})$. Then $\overline{\rho}(f)\in\rC_0(V_{t^{-1}})$ and $\beta_t(\overline{\rho}(f))\in\rC_0(V_{t})$ while $\alpha_t(f)\in \rC_0(U_t)$ and $\overline{\rho}(f)\in\rC_0(V_{t})$. If $z\in V_t$ we know that $$d\big(\theta_{t^{-1}}(\rho(z)),\rho(\eta_{t^{-1}}(z))\big)<\delta,$$
therefore
\begin{align*}
\big|\overline{\rho}(\alpha_t(f))(z)-\beta_t(\overline{\rho}(f))(z)\big|&=\big|\alpha_t(f)(\rho(z))-\overline{\rho}(f)(\eta_{t^{-1}}(z))\big|\\
&=\big|f(\theta_{t^{-1}}(\rho(z)))-f(\rho(\eta_{t^{-1}}(z)))\big|<\varepsilon.
\end{align*}
Therefore $\unorm{\overline{\rho}(\alpha_t(f))-\beta_t(\overline{\rho}(f))}<\varepsilon$. As in Proposition~\ref{prop: cts RFD implies RFD} we compose with a faithful covariant embedding
\[(\phi,v):(\rC(Z),G,\beta)\to\bM_d\]
and obtain a $\ast$-homomorphism $\varphi:=\phi\circ\rho:\rC(X)\to\bM_d$. Since $\phi$ is isometric, $\varphi$ is almost isometric within $\varepsilon$ on $\Omega$. Finally, for $t\in F$ and $f\in\Omega\cap\rC_0(U_{t^{-1}})$ we have
\begin{align*}
\|v_t\varphi(f)v_t^*-\varphi(\alpha_t(f))\|&=\|v_t\phi(\overline{\rho}(f))v_t^*-\phi(\overline{\rho}(\alpha_t(f)))\|\\
&=\|\phi(\beta_t(\overline{\rho}(f)))-\phi(\overline{\rho}(\alpha_t(f)))\|=\unorm{\beta_t(\overline{\rho}(f))-\overline{\rho}(\alpha_t(f))}<\varepsilon.
\end{align*}
\end{proof}

Using the same approach as as (3) implies (1) of Proposition~\ref{prop: MF bundle} and the perturbation techniques of Lemma~\ref{lem: pert lemma} we arrive at the following characterization of MF actions in the separable setting. 

\begin{proposition}\label{prop: separable MF action}
Let $\alpha:G\to\pAut(A)$ be a partial action with $G$ countable and $A$ separable. Then $\alpha$ is MF (QD) if and only if there is a sequence of $\ast$-linear (c.p.c) maps $\big(\varphi_n:A\to\bM_{k_n}\big)_{n\geq1}$ and a sequence of maps $\big(v_n:G\to\mathrm{PI}(\bM_{k_n})\big)_{n\geq1}$ such that: for all $x,y\in A$, $s,t\in G$, $a\in D_{t^{-1}}$,
\begin{enumerate}[(i)]
\item $\|\varphi_n(xy)-\varphi_n(x)\varphi_n(y)\|\to0$,
\item $\|\varphi_n(x)\|\to\|x\|$,
\item $v_n(e)=1_{k_n}$,
\item $\|v_n(t)^*-v_n({t^{-1}})\|\to0$,
\item $\|v_n({s^{-1}})v_n(s)v_n(t)-v_n({s^{-1}})v_n({st})\|\to0$,
\item $\|v_n(t)\varphi_n(a)v_n(t)^*-\varphi_n(\alpha_t(a))\|\to0$.
\end{enumerate}
Moreover, if $\alpha$ is MF and $A$ is nuclear, then $\alpha$ is QD.
\end{proposition}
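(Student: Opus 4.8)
The plan is to mirror the proof of (3)$\Rightarrow$(1) in Proposition~\ref{prop: MF bundle}, promoting the raw maps furnished by the $\varepsilon$-definition to honest maps on all of $A$ by passing through the quotient $\prod_n\bM_{k_n}/\oplus_n\bM_{k_n}$ and relifting, while carrying the multiplier data $(v_n)$ along. For the forward direction, separability of $A$ and countability of $G$ let me fix an increasing exhaustion $(F_k)_k$ of $G$ by finite symmetric sets with $e\in F_k$ and $F_k^2\subseteq F_{k+1}$, dense subsets $C_t\subseteq D_t$ (using $D_e=A$, so that the span of $\bigcup_tC_t$ is dense in $A$), and an increasing, adjoint-closed sequence of finite sets $\Omega_n\subseteq\bigcup_{t\in F_n}D_t$ with $\Omega_{n,s}\Omega_{n,t}\subseteq\Omega_{n+1,st}$ whose $\bQ[i]$-span $A_0$ is a dense $\ast$-subalgebra, exactly as in that proof. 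Running Definition~\ref{def: MF partial action} against $(F_n,\Omega_n,\varepsilon_n)$ with $\varepsilon_n\to0$ yields $\psi_n\colon A\to\bM_{k_n}$ and $v_n\colon G\to\bM_{k_n}$; by Lemma~\ref{lem: pert lemma} I may take each $v_n(t)$ to be a partial isometry, retaining $v_n(e)=1_{k_n}$ and $\|v_n(t)\|\le1$. Conditions (i)--(iii) of the definition make $\Phi(a):=\pi\big((\psi_n(a))_n\big)$ an isometric $\ast$-homomorphism on $A_0$, extending to an embedding $\Phi\colon A\hookrightarrow\prod_n\bM_{k_n}/\oplus_n\bM_{k_n}$, while (iv)--(vii) make $V_t:=\pi\big((v_n(t))_n\big)$ a partial $\ast$-representation and (viii) gives the covariance $V_t\Phi(a)V_t^*=\Phi(\alpha_t(a))$ for $a\in D_{t^{-1}}$ (on the dense set, then by continuity).

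The crucial step is to \emph{relift}. I replace the $\psi_n$ by the coordinate maps $\varphi_n\colon A\to\bM_{k_n}$ of a $\ast$-linear lift of $\Phi$ — obtained from any linear lift by the symmetrization $x\mapsto\frac12(\varphi(x)+\varphi(x^*)^*)$ as in~\eqref{eq: *-linear} — so that $\pi\big((\varphi_n(a))_n\big)=\Phi(a)$ for \emph{all} $a\in A$, and I keep the same partial isometries $v_n$. Since $\pi$ is a $\ast$-homomorphism, the exact relations for $(\Phi,V)$ become vanishing-error relations for $(\varphi_n,v_n)$: conditions (i),(ii) of the proposition follow from $\Phi$ being a multiplicative isometry, the conditions (iii)--(v) of the proposition are the defining relations of $V$ read off its lift, and (vi) follows from $\pi\big((v_n(t)\varphi_n(a)v_n(t)^*)_n\big)=V_t\Phi(a)V_t^*=\Phi(\alpha_t(a))=\pi\big((\varphi_n(\alpha_t(a)))_n\big)$, so that the difference is a null sequence. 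This produces the required sequences.

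Conversely, given such sequences, for $\varepsilon>0$ and finite $F\subseteq G$, $\Omega\subseteq\bigcup_{t\in F}D_t$, all the finitely many limits in (i)--(vi) hold past some index, at which point $(\varphi_n,v_n)$ witnesses Definition~\ref{def: MF partial action} ($\ast$-linearity makes (i) of the definition exact and $v_n(t)\in\mathrm{PI}(\bM_{k_n})$ gives $\sup_t\|v_n(t)\|\le1$). For the moreover, I rerun the forward direction to produce $\Phi$ and the partial isometries $v_n$, but now relift $\Phi$ using nuclearity: the Choi--Effros theorem recalled in Section~\ref{sec: preliminary} furnishes a c.p.c.\ lift $\Psi=(\psi_n)_n$ of the (unital, hence c.p.c.) $\ast$-homomorphism $\Phi$, with c.p.c.\ coordinate maps $\psi_n$. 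As both $\psi_n$ and the earlier $\varphi_n$ lift $\Phi$, we have $\|\psi_n(a)-\varphi_n(a)\|\to0$ for each $a\in A$; together with $\|v_n(t)\|\le1$ this preserves (i),(ii) and upgrades the covariance, since
\begin{align*}
\|v_n(t)\psi_n(a)v_n(t)^*-\psi_n(\alpha_t(a))\|
&\le\|\psi_n(a)-\varphi_n(a)\|+\|\varphi_n(\alpha_t(a))-\psi_n(\alpha_t(a))\|\\
&\quad+\|v_n(t)\varphi_n(a)v_n(t)^*-\varphi_n(\alpha_t(a))\|\too0.
\end{align*}
Hence $(\psi_n,v_n)$ witnesses that $\alpha$ is QD.

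I expect the main obstacle to be the bookkeeping in the relifting step: one must avoid the raw maps $\psi_n$, which are controlled only on the finite sets $\Omega_n$, and instead assemble the honest covariant representation $(\Phi,V)$ in the quotient first, so that the limiting identities (i)--(vi) hold for \emph{all} $x,y\in A$ and \emph{all} $a\in D_{t^{-1}}$ rather than on a dense set only. Making the covariance relation survive this round trip — exact in the quotient, then lifted back to a norm-null error — is the technical heart of the argument.
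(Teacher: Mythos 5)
Your architecture is the one the paper intends (mirroring (3)$\Rightarrow$(1) of Proposition~\ref{prop: MF bundle} together with Lemma~\ref{lem: pert lemma}), and both your converse direction and your Choi--Effros argument for the ``moreover'' statement agree with the paper's proof. But the forward direction has a genuine gap at condition (ii). After relifting, all you retain is the identity $\pi\big((\varphi_n(x))_n\big)=\Phi(x)$ together with the fact that $\Phi$ is an isometric $\ast$-homomorphism. Since the norm on $\prod_n\bM_{k_n}\big/\bigoplus_n\bM_{k_n}$ is $\|\pi((a_n)_n)\|=\limsup_n\|a_n\|$, this yields only $\limsup_n\|\varphi_n(x)\|=\|x\|$, whereas (ii) demands the genuine limit. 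This is not a formality: the limit property is \emph{not} a consequence of $\Phi$ being a multiplicative isometry. For instance $\Phi:\bC\to\prod_n\bM_1/\bigoplus_n\bM_1$, $\Phi(z)=\pi\big((z,0,z,0,\dots)\big)$, is an isometric $\ast$-homomorphism, yet for \emph{any} lift $\varphi$ of $\Phi$ the sequence $\|\varphi_n(1)\|$ accumulates at both $0$ and $1$, so no lift satisfies (ii). Moreover, the defect propagates into your ``moreover'' paragraph, since there you deduce (ii) for the c.p.c.\ maps $\psi_n$ from (ii) for the $\varphi_n$.

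The missing ingredient --- present in the paper's proof of (3)$\Rightarrow$(1) of Proposition~\ref{prop: MF bundle}, which you set out to mirror --- is to keep the original approximants $\psi_n$ in play after the relift. For $a$ in the countable dense $\bQ[i]$-subalgebra $A_0$ one has $\Phi(a)=\pi\big((\psi_n(a))_n\big)$, hence $\|\varphi_n(a)-\psi_n(a)\|\to0$, and since $\big|\|\psi_n(a)\|-\|a\|\big|<\varepsilon_n$ eventually, the genuine limit $\|\varphi_n(a)\|\to\|a\|$ holds for $a\in A_0$. One then upgrades from $A_0$ to $A$, either by the paper's ultrafilter argument (for every free ultrafilter $\omega$ the map $x\mapsto\pi_\omega\big((\varphi_n(x))_n\big)$ is a $\ast$-homomorphism which is isometric on $A_0$, hence isometric on $A$, so $\lim_\omega\|\varphi_n(x)\|=\|x\|$ for every $\omega$, which forces the ordinary limit), or directly: for $x\in A$ and $a\in A_0$ with $\|x-a\|<\varepsilon$ one has
\[
\liminf_n\|\varphi_n(x)\|\;\geq\;\liminf_n\|\varphi_n(a)\|-\limsup_n\|\varphi_n(x-a)\|\;\geq\;\|a\|-\|x-a\|\;\geq\;\|x\|-2\varepsilon,
\]
where the middle $\limsup$ is computed from the isometry of $\Phi$. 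With this step inserted, (ii) holds for all $x\in A$, and the rest of your argument --- the covariance bookkeeping via null sequences, the converse, and the quasidiagonal upgrade --- goes through as written.
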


\begin{proof}
We only prove the last statement. Assuming $\alpha$ is MF, let  $\big(\varphi_n:A\to\bM_{k_n}\big)_n$ and $(v_n)_n$ be the sequnces satisfying conditions (i) through (vi) of Proposition~\ref{prop: separable MF action}. We then get an isometric $\ast$-homomorphism 
\[\varphi:A\too\frac{\prod_{n\geq1}\bM_{k_n}}{\bigoplus_{n\geq1}\bM_{k_n}};\quad\varphi(a)=\pi\big((\varphi_n(a))_n\big).\]
Since $A$ is nuclear, $\varphi$ admits a c.p.c. lift $\psi:A\to\prod_{n\geq1}\bM_{k_n}$. Composing with the canonical projections we get c.p.c  maps $\big(\psi_n:=\pi_n\circ\psi:A\to\bM_{k_n}\big)_n$. It follows that $\|\psi_n(a)-\varphi_n(a)\|\too0$. Standard approximating arguments show that the sequences $(\psi_n)_n$ and $(v_n)_n$ satisfy the required conditions of a QD action.
\end{proof}

\begin{proposition}\label{prop: MF partial bundles}
Let $\alpha:G\to\pAut(A)$ be a partial \cstar-dynamical system with partial automorphisms $\big\{\alpha_t:D_{t^{-1}}\to D_t\big\}_{t\in G}$, and write $\cA_\alpha=\{A_t\}_{t\in G}$ for the associated Fell bundle.
\begin{enumerate}[(1)]
\item If the action $\alpha$ is MF, the bundle $\cA_\alpha$ is MF.
\item If each ideal in the partial system is unital and $\cA_\alpha$ is MF, then the action $\alpha$ is MF.
\end{enumerate}
\end{proposition}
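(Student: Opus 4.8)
The plan is to prove the two implications separately, exploiting throughout the $\ast$-isomorphism~\eqref{eq: *-iso between crossed product and bundle} identifying the fibre element $(a,s)$ with $a\delta_s$; under it a covariant family $(\varphi,v)$ and a bundle family $\{\psi_t\}$ correspond via $\psi_s((a,s))=\varphi(a)v_s$, and every computation is modelled on the exact identity $(\varphi\rtimes v)(a\delta_s)=\varphi(a)v_s$.

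For (2) I would use the unital ideals decisively. Given $\varepsilon>0$, finite $F\subseteq G$ and finite $\Omega\subseteq\bigcup_{t\in F}D_t$, apply Definition~\ref{def: MF bundle} to a finite subset of the total space large enough to contain $\{(a,e):a\in\Omega\}$, the units $\{(1_{D_t},t):t\in K\}$ for a symmetric $K\supseteq F\cup F^2$, and the finitely many products of these occurring below, obtaining maps $\{\psi_t:A_t\to\bM_d\}$. Set $\varphi:=\psi_e$ and $v_t:=\psi_t((1_{D_t},t))$. Conditions (i)--(iii) of Definition~\ref{def: MF partial action} are immediate from the fibre conditions on $A_e$, where $(a,e)(b,e)=(ab,e)$. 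The partial-representation relations reduce to the algebra of central unit projections: since $\alpha_{t^{-1}}(1_{D_t})=1_{D_{t^{-1}}}$ one gets $(1_{D_t},t)^*=(1_{D_{t^{-1}}},t^{-1})$, hence (vi); since $1_{D_s}1_{D_t}=1_{D_s\cap D_t}$ and $\alpha_s(1_{D_{s^{-1}}\cap D_t})=1_{D_s\cap D_{st}}$, both $(1_{D_{s^{-1}}},s^{-1})(1_{D_s},s)(1_{D_t},t)$ and $(1_{D_{s^{-1}}},s^{-1})(1_{D_{st}},st)$ equal $(1_{D_{s^{-1}}\cap D_t},t)$, giving (vii); and $(1_{D_t},t)(a,e)(1_{D_{t^{-1}}},t^{-1})=(\alpha_t(a),e)$ for $a\in D_{t^{-1}}$ gives (viii) after two uses of fibre multiplicativity. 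The normalisations (iv) $v_e=1$ and (v) $\sup_{t\in F}\|v_t\|\le1$ are arranged by harmless perturbation: $\varphi(1_A)$ lies within $\varepsilon$ of a projection $P$, compressing every $\psi_t$ to the corner $P\bM_dP$ makes $\varphi$ unital (harmless on $\Omega$ since $1_A$ acts as a unit), and any $v_t$ of norm slightly above $1$ is rescaled.

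For (1) I would set $\psi_s((a,s)):=\varphi(a)v_s$, where $(\varphi,v)$ is supplied by Definition~\ref{def: MF partial action} and, by Lemma~\ref{lem: pert lemma}, the $v_t$ are taken to be partial isometries. Given finite $\Omega\subseteq B$ and $\varepsilon>0$, I would run that definition on a symmetric $F'\supseteq F\cup F^2$ (with $F$ the indices occurring in $\Omega$) and a finite $\Omega'$ containing the fibre components of $\Omega$, the elements $\alpha_{s^{-1}}(a)$, $\alpha_{s^{-1}}(a)b$, $\alpha_s(\alpha_{s^{-1}}(a)b)$ arising below, and---crucially---positive contractions $u_s\in D_{s^{-1}}$ drawn from an approximate unit together with $\alpha_s(u_s)$, for each $s\in F'$. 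The main obstacle is that hypothesis (viii) of Definition~\ref{def: MF partial action} only controls $v_s\varphi(c)v_s^*$ for $c\in D_{s^{-1}}$, whereas the fibre identities force one to commute $v_s$ past $\varphi(b)$ for $b$ in another ideal; this is possible only once the support projections of the $v_s$ are known to act approximately as the identity on the relevant $\varphi(D_t)$.

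I would isolate this as a key estimate: for $c\in D_{s^{-1}}$, $\varphi(c)\,v_s^*v_s\approx\varphi(c)\approx v_s^*v_s\,\varphi(c)$, and, applying it at $s^{-1}$, the range projection $v_sv_s^*$ approximately fixes $\varphi(D_s)$. The approximate units enter precisely here: two applications of (viii) give $v_s^*v_s\,\varphi(u_s)\,v_s^*v_s\approx\varphi(u_s)$, and since $\varphi(u_s)$ is an approximate positive contraction the corner $(1-v_s^*v_s)\varphi(u_s)(1-v_s^*v_s)$ is small, which forces $\varphi(u_s)(1-v_s^*v_s)\approx0$; multiplying by $\varphi(c)$ and using $cu_s\approx c$ with approximate multiplicativity yields the estimate. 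With it in hand the three conditions of Definition~\ref{def: MF bundle} follow by the expected manipulations: from (viii) one has $\varphi(a)\approx v_s\varphi(\alpha_{s^{-1}}(a))v_s^*$, so absorbing the support projection and using (vii) in the form $v_sv_t\approx v_sv_s^*v_{st}$ gives $\psi_s((a,s))\psi_t((b,t))=\varphi(a)v_s\varphi(b)v_t\approx\varphi(\alpha_s(\alpha_{s^{-1}}(a)b))v_{st}=\psi_{st}((a,s)(b,t))$; the involution condition follows similarly from (vi) and (i), and the norm condition from $\|\varphi(a)v_s\|^2=\|\varphi(a)v_sv_s^*\varphi(a)^*\|\approx\|\varphi(aa^*)\|\approx\|a\|^2$. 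The technical heart, and the only place the argument is more than bookkeeping, is this key estimate on the support projections.
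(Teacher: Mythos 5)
Your proposal is correct in substance, but for part (1) it takes a genuinely different route from the paper, while for part (2) it is essentially the paper's argument. For (1), the paper does none of your finite-level estimates: it reduces to the separable case, invokes the sequential reformulation of the MF action property (Proposition~\ref{prop: separable MF action}), and pushes the sequences $(\varphi_n)_n$, $(v_n)_n$ into the norm ultraproduct $\prod_{\omega}\bM_{k_n}$, where every approximate relation becomes exact. There $(\varphi,v)$ is an honest covariant representation, so the general theory from the preliminaries gives an honest $\ast$-homomorphism $\varphi\rtimes v$ on $A\rtimes_{\mathrm{alg}}^{\alpha}G\cong\rC_c(\cA_\alpha)$, i.e.\ a faithful representation $\pi_t\big((a,t)\big)=\varphi(a)v_t$ of $\cA_\alpha$ in $\prod_{\omega}\bM_{k_n}$, and then Proposition~\ref{prop: MF bundle} ((2)$\Rightarrow$(3)) finishes. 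Your ``key estimate'' --- that $v_s^*v_s$ approximately fixes $\varphi(D_{s^{-1}})$ --- is exactly the approximate shadow of the identity $\e_t\pi(a)=\pi(a)\e_t$ of~\eqref{eq: covariant rep relations} (together with $\e_{t^{-1}}\pi(a)\e_{t^{-1}}=\pi(a)$ for $a\in D_{t^{-1}}$), which covariant representations satisfy \emph{exactly}; the ultraproduct trick lets the paper use it for free, whereas you must prove it quantitatively via approximate units, which your argument does correctly (one bookkeeping point: to know $\varphi(u_s)$ is approximately positive you should also place $u_s^{1/2}$ in the finite input set, or take $u_s$ to be a square of an element you include). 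Your route buys a self-contained, ultrafilter-free proof with explicit constants; the paper's route buys a clean separation of concerns, with all $\varepsilon$-management done once inside Proposition~\ref{prop: MF bundle} so that part (1) reduces to pure algebra. For (2) both proofs hinge on the same idea and the same fiber identities: set $v_t:=\psi_t\big((p_t,t)\big)$ for the units $p_t$ of the ideals, use $(p_{s^{-1}},s^{-1})(p_s,s)(p_t,t)=(p_{s^{-1}}p_t,t)=(p_{s^{-1}},s^{-1})(p_{st},st)$ for the partial-representation relation and $(p_t,t)(a,e)(p_{t^{-1}},t^{-1})=(\alpha_t(a),e)$ for covariance; the paper works sequentially with approximate representations and perturbs the $v_n(t)$ to partial isometries via the technique of Lemma~\ref{lem: pert lemma}, while you work at the finite level and arrange $v_e=1$, $\|v_t\|\leq1$ by a corner compression --- both normalizations are routine and legitimate.
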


\begin{proof}
We may assume that $A$ is separable and $G$ is countable.

(1) Let $\big(\varphi_n:A\to\bM_{k_n}\big)_n$ and  $\big(v_n:G\to\mathrm{PI}(\bM_{k_n})\big)_n$ be as in Proposition~\ref{prop: separable MF action}. These clearly induce a faithful covariant representation
\[(\varphi,v):(A,G,\alpha)\to\prod_{\omega}\bM_{k_n};\quad \varphi(a)=\pi_\omega\big((\varphi_n(a))_n\big),\ v(t)=\pi_\omega\big((v_n(t))_n\big).\]
We thus get a $\ast$ homomorphism
\[\varphi\rtimes v:A\rtimes_{\mathrm{alg}}^{\alpha}G\too\prod_{\omega}\bM_{k_n};\quad a\delta_t\mapsto\varphi(a)v_t.\]
which we compose with the $\ast$-isomorphism $\rC_c(\cA_\alpha)\cong A\rtimes_{\mathrm{alg}}^{\alpha}G$ (see~\eqref{eq: *-iso between crossed product and bundle}) to yield a $\ast$-homomorphism
\[\rC_c(\cA_\alpha)\too\prod_{\omega}\bM_{k_n};\quad (a,t)\delta_t\mapsto\varphi(a)v_t.\]
This in turn gives a representation $\pi:\cA_\alpha\to\prod_{\omega}\bM_{k_n}$;
\[\pi=\big\{\pi_t:A_t\to\prod_{\omega}\bM_{k_n}\big\}_{t\in G};\quad\pi_t((a,t))=\varphi(a)v_t.\]
Now for every $a\in A$, 
\[\|\pi_e(a,e)\|=\|\varphi(a)v_e\|=\|\varphi(a)\|=\lim_{n\to\omega}\|\varphi_n(a)\|=\|a\|,\]
so $\pi$ is faithful. Thus $\cA_\alpha$ is MF by Proposition~\ref{prop: MF bundle}.

(2) For notational convenience we will write elements in $A_t$ as $a\nu_t$ instead of the more cumbersome $(a,t)$ where $a\in D_t$.

We assume each ideal $D_t$ has unit $p_t$. In particular, $A$ is unital with unit $p_e=1_A$. Since $\alpha_t:D_{t^{-1}}\to D_t$ is a $\ast$-isomorphism, $\alpha_t(p_{t^{-1}})=p_t$. 

Now let 
\[\big(\varphi_n=\big\{\varphi_{n,t}:A_t\to\bM_{k_n}\big\}_{t\in G}\big)_{n\geq1}\] 
be an approximate representation of $\cA_\alpha$ in the class of matrix algebras. We then set
\[\phi_n:A\to\bM_{k_n};\quad\phi_n(a)=\varphi_{n,e}(a\nu_e),\]
and
\[v:G\to\bM_{k_n};\quad v_n(t)=\varphi_{n,t}(p_t\nu_t).\]
Then $v_n(e)=\varphi_{n,e}(1\nu_e)=1$, and
$\|v_n(t)\|=\|\varphi_{n,t}(p_t\nu_t)\|\too\|p_t\nu_t\|=1$. Also
\begin{align*}
\|v_n(t)^*-v_n(t^{-1})\|&=\|\varphi_{n,t}(p_t\nu_t)^*-\varphi_{n,{t^{-1}}}(p_{t^{-1}}\nu_{t^{-1}})\|\\&=\|\varphi_{n,t}(p_t\nu_t)^*-\varphi_{n,{t^{-1}}}((p_t\nu_t)^*)\|\stackrel{n\to\infty}{\too}0.
\end{align*}
Next, since $p_{s^{-1}}p_t$ is a unit for the ideal $D_{s^{-1}}\cap D_t$, and $p_{s}p_{st}$ is a unit for the ideal $D_{s}\cap D_{st}$, and $\alpha_s\big(D_{s^{-1}}\cap D_t\big)=D_{s}\cap D_{st}$ we have 
\[p_s\nu_sp_t\nu_t=\alpha_s\big(\alpha_{s^{-1}}(p_s)p_t\big)\nu_{st}=\alpha_s(p_{s^{-1}}p_t)\nu_{st}=p_{s}p_{st}\nu_{st}.\]
Also,
\[p_{s^{-1}}\nu_{s^{-1}}p_sp_{st}\nu_{st}=\alpha_{s^{-1}}\big(\alpha_s(p_{s^{-1}}p_sp_{st})\big)\nu_t=\alpha_{s^{-1}}(p_sp_{st})\nu_t=p_{s^{-1}}p_t\nu_t,\]
and, similarly, $p_{s^{-1}}\nu_{s^{-1}}p_{st}=p_{s^{-1}}p_t\nu_t$. Therefore,
\begin{align*}
\|v_n(s^{-1})&v_n(s)v_n(t)-v_n(s^{-1})v_n(st)\|
\\ = &\|\varphi_{n,s^{-1}}(p_{s^{-1}}\nu_{s^{-1}})\varphi_{n,s}(p_s\nu_s)\varphi_{n,t}(p_t\nu_t)-\varphi_{n,s^{-1}}(p_{s^{-1}}\nu_{s^{-1}})\varphi_{n,st}(p_{st}\nu_{st})\|\\
\leq &  \|\varphi_{n,s^{-1}}(p_{s^{-1}}\nu_{s^{-1}})\big(\varphi_{n,s}(p_s\nu_s)\varphi_{n,t}(p_t\nu_t)-\varphi_{n,st}(p_s\nu_sp_t\nu_t)\big)\|\\
& +\|\varphi_{n,s^{-1}}(p_{s^{-1}}\nu_{s^{-1}})\varphi_{n,st}(p_{s}p_{st}\nu_{st})-\varphi_{n,t}(p_{s^{-1}}\nu_{s^{-1}}p_sp_{st}\nu_{st})\|\\
&+\|\varphi_{n,t}(p_{s^{-1}}p_t\nu_t)-\varphi_{n,s^{-1}}(p_{s^{-1}}\nu_{s^{-1}})\varphi_{n,st}(p_{st}\nu_{st})\|\stackrel{n\to\infty}{\too}0.
\end{align*}
Looking at the linear maps $\phi_n$, for $a,b\in A$, we have $a\nu_eb\nu_e=ab\nu_e$, so
\[\|\phi_n(a)\phi_n(b)-\phi_n(ab)\|=\|\varphi_{n,e}(a\nu_e)\varphi_{n,e}(b\nu_e)-\varphi_{n,e}(ab\nu_e)\|\stackrel{n\to\infty}{\too}0.\]
Also, $(a\nu_e)^*=a^*\nu_e$, so
\[\|\phi_n(a)^*-\phi_n(a^*)\|=\|\varphi_{n,e}(a\nu_e)^*-\varphi_{n,e}(a^*\nu_e)\|\stackrel{n\to\infty}{\too}0.\]
Next, $\|\phi_n(a)\|=\|\varphi_{n,e}(a\nu_e)\|\stackrel{n\to\infty}{\too}\|a\nu_e\|=\|a\|$. Finally, if $a\in D_{t^{-1}}$ then since $(p_t\nu_t)(a\nu_e)(p_{t^{-1}}\nu_{t^{-1}})=\alpha_t(a)\nu_e$ we have
\begin{align*}
\|v_n(t)&\phi_n(a)v_n(t)^*-\phi_n(\alpha_t(a))\|\\ &\leq\|v_n(t)\phi_n(a)v_n(t)^*-v_n(t)\phi_n(a)v_n(t^{-1})\|+\|v_n(t)\phi_n(a)v_n(t^{-1})-\phi_n(\alpha_t(a))\|\\
&\leq\|v_n(t)\phi_n(a)\big(v_n(t)^*-v_n(t^{-1})\big)\|\\
&\qquad\qquad +\|\varphi_{n,t}(p_t\nu_t)\varphi_{n,e}(a\nu_e)\varphi_{n,t^{-1}}(p_{t^{-1}}\nu_{t^{-1}})-\varphi_{n,e}(\alpha_t(a)\nu_e)\|\\
&\leq\|v_n(t)\|\|\phi_n(a)\|\|v_n(t)^*-v_n(t^{-1})\|\\
&\qquad\qquad +\|\varphi_{n,t}(p_t\nu_t)\varphi_{n,e}(a\nu_e)\varphi_{n,t^{-1}}(p_{t^{-1}}\nu_{t^{-1}})-\varphi_{n,e}\big((p_t\nu_t)(a\nu_e)(p_{t^{-1}}\nu_{t^{-1}})\big)\|
\end{align*}
which tends to zero as $n\to\infty$.

Using identical perturbation techniques as in Lemma~\ref{lem: pert lemma} we can find a sequence of maps into the set of partial isometries;  $\big(w_n:G\to\mathrm{PI}(\bM_{k_n})\big)_n$, with $w_n(e)=1_{k_n}$ and  $\|v_n(t)-w_n(t)\|\to0$ for every $t\in G$. We can also find a sequence of $\ast$-linear maps $\big(\psi_n:A\to\bM_{k_n}\big)_n$ with $\|\psi_n(a)-\phi_n(a)\|\to0$ for every $a\in A$. Using a standard argument the conditions of Proposition~\ref{prop: separable MF action} are clearly satisfied by the sequences $(\psi_n)_n$ and $(w_n)_n$. Thus $\alpha$ is MF.
\end{proof}

We arrive at our final main results which characterize MF and QD crossed products arising from partial \cstar-systems respectively. These results follow immediately from Propositions~\ref{prop: MF partial bundles}, ~\ref{prop: separable MF action}, and Theorems~\ref{thm: MF C*-bundle}, and~\ref{thm: QD C*bundle algebras}.

\begin{theorem}\label{thm: MF crossed products}
Let $\alpha:G\to\pAut(A)$ be a partial \cstar-dynamical system.
\begin{enumerate}[(1)]
\item If $G$ is exact, and if $\alpha$ and $\cstarl{G}$ are MF, then the reduced crossed product $A\rtimes_\lambda^\alpha G$ is MF.
\item If each ideal of the partial system is unital, and $A\rtimes_\lambda^\alpha G$ is MF, then the action $\alpha$ is MF.
\end{enumerate}
\end{theorem}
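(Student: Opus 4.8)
The plan is to deduce both statements by assembling the bundle-level results already in hand, working throughout with the identification $A\rtimes_\lambda^\alpha G=\cstarl{\cA_\alpha}$ from the preliminaries, where $\cA_\alpha=\{A_t\}_{t\in G}$ is the Fell bundle associated to the partial system.

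For part (1), I would run the chain $\alpha\text{ MF}\Rightarrow\cA_\alpha\text{ MF}\Rightarrow\cstarl{\cA_\alpha}\text{ MF}$. The first implication is exactly Proposition~\ref{prop: MF partial bundles}(1). For the second, since $G$ is exact and both $\cA_\alpha$ and $\cstarl{G}$ are MF, Theorem~\ref{thm: MF C*-bundle} gives that $\cstarl{\cA_\alpha}=A\rtimes_\lambda^\alpha G$ is MF, which is the desired conclusion.

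For part (2), I would run the reverse chain $\cstarl{\cA_\alpha}\text{ MF}\Rightarrow\cA_\alpha\text{ MF}\Rightarrow\alpha\text{ MF}$. Starting from the hypothesis that $A\rtimes_\lambda^\alpha G=\cstarl{\cA_\alpha}$ is MF, Proposition~\ref{prop: MF alg implies MF bundle} yields that the bundle $\cA_\alpha$ is MF, and then, since every ideal $D_t$ is unital, Proposition~\ref{prop: MF partial bundles}(2) promotes this to MF-ness of the action $\alpha$ (whose separable characterization is the content of Proposition~\ref{prop: separable MF action}). The relevant separability reductions are internal to these propositions, so part (2) is genuinely immediate.

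The one point that needs real care is that Theorem~\ref{thm: MF C*-bundle} carries a separability hypothesis absent from the present statement, so for part (1) I must reduce to the separable case; this is the main obstacle. I would use the recorded fact that a \cstar-algebra is MF precisely when all of its separable \cstar-subalgebras are MF. Given a separable $C\subseteq A\rtimes_\lambda^\alpha G$, I would capture the relevant data inside a separable, $\alpha$-invariant subalgebra $A_0\subseteq A$ and a countable subgroup $G_0\le G$, check that $G_0$ is again exact (subgroups of exact groups are exact) and that $\cstarl{G_0}\subseteq\cstarl{G}$ remains MF (MF passes to subalgebras), and verify that $\alpha$ restricts to an MF action of $G_0$ on $A_0$. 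Applying the separable case of (1) to this restricted system gives that $A_0\rtimes_\lambda^{\alpha}G_0$ is MF; since it sits as a subalgebra of $A\rtimes_\lambda^\alpha G$ containing $C$, it follows that $C$, and hence $A\rtimes_\lambda^\alpha G$, is MF. The only slightly technical step in this localization is confirming that the restricted reduced crossed product embeds isometrically into the ambient one, which follows from compatibility of the left-regular representations, equivalently of the canonical faithful expectations of Proposition~\ref{expectation}.
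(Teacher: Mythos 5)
Your proof is correct and follows essentially the same route as the paper: the paper's entire proof of this theorem is a single sentence citing Propositions~\ref{prop: MF partial bundles} and~\ref{prop: separable MF action} and Theorems~\ref{thm: MF C*-bundle} and~\ref{thm: QD C*bundle algebras} (with Proposition~\ref{prop: MF alg implies MF bundle} used tacitly for part (2)), which is exactly your two chains $\alpha\ \mathrm{MF}\Rightarrow\cA_\alpha\ \mathrm{MF}\Rightarrow\cstarl{\cA_\alpha}\ \mathrm{MF}$ and its reverse. Where you genuinely add something is the separability reduction in part (1): the theorem carries no separability hypothesis, while Theorem~\ref{thm: MF C*-bundle} explicitly assumes a separable bundle, and the paper never reconciles the two --- its only nod to the issue is the unjustified opening line ``We may assume that $A$ is separable and $G$ is countable'' in the proof of Proposition~\ref{prop: MF partial bundles}. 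Your localization --- capturing a given separable $C\subseteq A\rtimes_\lambda^{\alpha}G$ inside $A_0\rtimes_\lambda^{\alpha}G_0$ for a separable $\alpha$-invariant $A_0$ and countable $G_0\le G$, using that exactness passes to subgroups, that MF passes to subalgebras (so $\cstarl{G_0}\subseteq\cstarl{G}$ is MF), that restricting an MF action to $(A_0,G_0)$ preserves the finite-subset approximation conditions, and that $A_0\rtimes_\lambda^{\alpha}G_0$ embeds isometrically into $A\rtimes_\lambda^{\alpha}G$ by the faithful-expectation argument of Proposition~\ref{expectation} and Lemma~\ref{lem: factorlemma} --- is sound in every step and supplies precisely the justification the paper leaves implicit; note that the same issue is in principle buried in part (2) as well, but there you are entitled to quote Proposition~\ref{prop: MF partial bundles} as stated, since its statement (unlike that of Theorem~\ref{thm: MF C*-bundle}) imposes no separability.
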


\begin{theorem}\label{thm: QD crossed products}
Let $A$ be a nuclear \cstar-algebra and suppose $\alpha:G\to\pAut(A)$ is partial action.
\begin{enumerate}[(1)]
\item If $\alpha$ is MF and $\cstarl{G}$ is QD, then the reduced crossed product $A\rtimes_\lambda^\alpha G$ is QD.
\item If $G$ is amenable and $\alpha$ is MF, then $A\rtimes_\lambda^\alpha G$ is QD.
\item If each ideal of the partial system is unital, and $A\rtimes_\lambda^\alpha G$ is QD, then the action $\alpha$ is QD.
\end{enumerate}
\end{theorem}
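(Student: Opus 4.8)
The plan is to route everything through the associated Fell bundle $\cA_\alpha=\{A_t\}_{t\in G}$, using the two facts that $A\rtimes_\lambda^\alpha G=\cstarl{\cA_\alpha}$ and that the unit fiber $A_e\cong A$ is nuclear by hypothesis. Since, as recalled earlier, a \cstar-algebra is MF (QD) if and only if all of its separable subalgebras are, and since the MF/QD conditions on $\alpha$ are phrased via finite data, I may reduce throughout to the case $A$ separable and $G$ countable; then $\cA_\alpha$ is a separable Fell bundle with nuclear unit fiber and the bundle-level results of Section~\ref{sec: approx of Fell bundles} apply directly.

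For the forward implications (1) and (2), the first move is to pass from the action to the bundle: by Proposition~\ref{prop: MF partial bundles}(1), the MF hypothesis on $\alpha$ gives that $\cA_\alpha$ is MF. For (1), I then invoke the first assertion of Theorem~\ref{thm: QD C*bundle algebras}: with $B_e=A_e\cong A$ nuclear, $\cstarl{G}$ quasidiagonal, and $\cA_\alpha$ MF, the cross-sectional algebra $\cstarl{\cA_\alpha}=A\rtimes_\lambda^\alpha G$ is quasidiagonal. For (2), I instead invoke the second assertion of the same theorem: amenability of $G$ together with $\cA_\alpha$ MF again forces $\cstarl{\cA_\alpha}$ to be quasidiagonal. (Part (2) is in fact a special case of (1), since for amenable $G$ the algebra $\cstarl{G}$ is quasidiagonal.)

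For the converse (3) the chain runs in reverse and then needs one genuine upgrade. Since $A\rtimes_\lambda^\alpha G=\cstarl{\cA_\alpha}$ is quasidiagonal, Proposition~\ref{prop: MF alg implies MF bundle} gives that $\cA_\alpha$ is quasidiagonal, and in particular MF. Because each ideal of the system is assumed unital, Proposition~\ref{prop: MF partial bundles}(2) transfers this back to the action, yielding that $\alpha$ is MF. Finally, since $A$ is nuclear, the last assertion of Proposition~\ref{prop: separable MF action} promotes the MF action to a QD action, completing (3).

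The \emph{main obstacle} is exactly this promotion in part (3): passing from an MF action to a QD action is not formal, and it is where nuclearity of $A$ is indispensable. The mechanism, already packaged inside Proposition~\ref{prop: separable MF action}, is that the MF approximating data assemble into a faithful $\ast$-homomorphism $A\to\prod_{n}\bM_{k_n}/\bigoplus_{n}\bM_{k_n}$, which admits a c.p.c.\ lift to $\prod_{n}\bM_{k_n}$ by Choi--Effros when $A$ is nuclear; the coordinate maps of this lift, together with the perturbed partial-isometry data $(v_n)_n$, supply the c.p.c.\ approximating maps demanded by a QD action. The remaining, more routine, points are the separability reductions and the bookkeeping confirming that $A_e\cong A$ really is the nuclear unit fiber feeding Theorem~\ref{thm: QD C*bundle algebras}.
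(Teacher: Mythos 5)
Your proposal is correct and follows essentially the same route as the paper: the paper derives this theorem directly from Proposition~\ref{prop: MF partial bundles} (to move between the action and the bundle $\cA_\alpha$), Theorem~\ref{thm: QD C*bundle algebras} (to get quasidiagonality of $\cstarl{\cA_\alpha}=A\rtimes_\lambda^\alpha G$ in parts (1) and (2)), Proposition~\ref{prop: MF alg implies MF bundle} (to go from the QD crossed product back to the bundle in part (3)), and the final assertion of Proposition~\ref{prop: separable MF action} (the nuclearity-powered Choi--Effros upgrade from MF action to QD action). Your identification of that last upgrade as the one non-formal step in (3), and your separability reduction, match the paper's implicit treatment exactly.
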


Combining Proposition~\ref{prop: RF implies MF} with Theorems~\ref{thm: MF crossed products},~\ref{thm: MF C*-bundle}, and~\ref{thm: QD crossed products}, we uncover a class of MF reduced crossed products.

\begin{corollary}\label{cor: MF classical crossed products}
Let $\theta:G\to\pHomeo(X)$ be a continuous partial action of $G$ on a compact metric space $X$. 
\begin{enumerate}[(1)]
\item If $\theta$ is residually finite, $G$ is exact, and $\cstarl{G}$ is MF, then $\rC(X)\rtimes_\lambda G$ is MF.

\item If $G$ is amenable and $\theta$ is residually finite, then $\rC(X)\rtimes_\lambda G$ is QD.
\end{enumerate}
\end{corollary}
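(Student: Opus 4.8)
The plan is to recognize that this corollary is a direct assembly of the structural results already established, with the single conceptual bridge being Proposition~\ref{prop: RF implies MF}, which promotes residual finiteness of the \emph{topological} partial action $\theta$ to quasidiagonality of the \emph{induced} partial \cstar-action $\alpha:G\to\pAut(\rC(X))$. Since quasidiagonality is by definition the matricial field property witnessed by maps whose unit-fiber part is additionally required to be c.p.c., every QD action is in particular MF; I would record this downgrade explicitly as the one small observation that glues the pieces together.

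For part (1), I would first apply Proposition~\ref{prop: RF implies MF} to the residually finite $\theta$ to conclude that $\alpha$ is QD, hence MF. With $\alpha$ now known to be MF, the hypotheses of Theorem~\ref{thm: MF crossed products}(1) are all met: $G$ is exact by assumption, $\cstarl{G}$ is MF by assumption, and $\alpha$ is MF. Theorem~\ref{thm: MF crossed products}(1) then yields directly that the reduced crossed product $\rC(X)\rtimes_\lambda G$ is MF. Internally this routes through the bundle-level statement Theorem~\ref{thm: MF C*-bundle}, but at the level of this corollary it suffices to cite the crossed-product version.

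For part (2), the key additional input is that the coefficient algebra $A=\rC(X)$ is commutative and unital, hence nuclear, so the nuclearity hypothesis of Theorem~\ref{thm: QD crossed products} is automatic. Residual finiteness of $\theta$ again gives, via Proposition~\ref{prop: RF implies MF}, that $\alpha$ is MF. Since $G$ is amenable, Theorem~\ref{thm: QD crossed products}(2) applies and delivers quasidiagonality of $\rC(X)\rtimes_\lambda G$.

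In this proof there is no genuine obstacle to overcome: all the analytic content --- the perturbation arguments upgrading approximate covariant representations to partial isometries, Fell's absorption principle, and the stability of the MF/QD properties under tensoring with MF algebras and passage to subalgebras --- has already been discharged in the preceding propositions and theorems. The only points requiring care are the trivial but necessary observations that QD implies MF, so that Proposition~\ref{prop: RF implies MF} feeds correctly into the MF-input of Theorem~\ref{thm: MF crossed products}, and that $\rC(X)$ is nuclear, so that Theorem~\ref{thm: QD crossed products} is applicable. Everything else is a matter of verifying that the stated hypotheses line up with those of the cited results.
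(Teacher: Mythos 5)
Your proposal is correct and follows exactly the paper's intended route: the paper proves this corollary precisely by combining Proposition~\ref{prop: RF implies MF} (residual finiteness of $\theta$ gives a QD, hence MF, partial action on $\rC(X)$) with Theorem~\ref{thm: MF crossed products}(1) for part (1) and Theorem~\ref{thm: QD crossed products}(2) for part (2). The two glue observations you single out --- that QD actions are in particular MF, and that $\rC(X)$ is nuclear --- are exactly the implicit steps the paper's one-line proof relies on.
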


Applying Corollary~\ref{cor: MF classical crossed products} to the partial Bernoulli shift~\ref{ex: Bernoulli shift}, and appealing to Haagerup and Thorbj{\o}rnsen deep result asserting that $\cstarl{\bF_r}$ is MF (\cite{FreeGroupMF}) we arrive at the following result. 

\begin{corollary}
Let $G\to\pHomeo(X_G)$ denote the partial Bernoulli action.
\begin{enumerate}[(1)]
\item If $G$ is residually finite, exact, and $\cstarl{G}$ is MF, then $\rC(X_G)\rtimes_\lambda G$ is MF.
\item $\rC(X_{\bF_r})\rtimes_\lambda\bF_r$ is MF.
\item If $G$ is amenable and residually finite, $\rC(X_G)\rtimes_\lambda G$ is QD.
\end{enumerate}
\end{corollary}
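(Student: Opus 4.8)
The plan is to deduce all three parts from the already-proved Corollary~\ref{cor: MF classical crossed products}, using as the single new ingredient the RFD Claim established inside Example~\ref{ex: Bernoulli shift}. The one observation that glues these together is that residual finite-dimensionality of a continuous partial action is stronger than residual finiteness. Indeed, comparing Definitions~\ref{def: RFD action} and~\ref{def: RF action}: given $\delta>0$ and a finite set $F\subseteq G$, an RFD action supplies a finite model $(Z,\eta,\rho)$ with $\rho$ strictly equivariant (so conditions (i) and (ii) of Definition~\ref{def: RF action} hold for every $t\in G$, in particular for $t\in F$) and with $\rho(Z)$ being $\delta$-dense; the exact equivariance $\rho(\eta_t(z))=\theta_t(\rho(z))$ immediately forces $d(\rho(\eta_t(z)),\theta_t(\rho(z)))=0<\delta$. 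Hence every RFD partial action is RF, and I would record this implication first.

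For part (1), since $G$ is (countable and) residually finite, the Claim in Example~\ref{ex: Bernoulli shift} shows that the partial Bernoulli action $\theta^G\colon G\to\pHomeo(X_G)$ is RFD, hence RF. The base space $X_G\subseteq\{0,1\}^G$ is compact metrizable, so adding the hypotheses that $G$ is exact and $\cstarl{G}$ is MF places us exactly in the situation of Corollary~\ref{cor: MF classical crossed products}(1), which yields that $\rC(X_G)\rtimes_\lambda G$ is MF. Part (2) is then a direct specialization: $G=\bF_r$ is residually finite and exact, and $\cstarl{\bF_r}$ is MF by the theorem of Haagerup and Thorbj{\o}rnsen (\cite{FreeGroupMF}); applying part (1) gives that $\rC(X_{\bF_r})\rtimes_\lambda\bF_r$ is MF.

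For part (3), I would again invoke the Claim: $G$ residually finite makes the partial Bernoulli action RFD and therefore RF, and with $G$ now assumed amenable the hypotheses of Corollary~\ref{cor: MF classical crossed products}(2) are satisfied, giving that $\rC(X_G)\rtimes_\lambda G$ is QD. The only genuine step requiring care is the RFD $\Rightarrow$ RF passage at the start; but this is a mild weakening of the defining quantifiers rather than a real obstacle, so once it is in place the remainder is bookkeeping together with the standard facts that free groups are residually finite and exact and the cited MF-ness of $\cstarl{\bF_r}$.
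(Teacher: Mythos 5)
Your proposal is correct and takes essentially the same route as the paper: the paper obtains this corollary precisely by applying Corollary~\ref{cor: MF classical crossed products} to the partial Bernoulli shift of Example~\ref{ex: Bernoulli shift} (whose Claim gives the RFD property when $G$ is countable and residually finite), invoking the Haagerup--Thorbj{\o}rnsen theorem that $\cstarl{\bF_r}$ is MF for part (2). The only step the paper leaves implicit is the passage from RFD to residually finite, which you correctly identify and justify as a weakening of quantifiers (exact equivariance for all $t\in G$ trivially yields $\delta$-approximate equivariance on any finite $F$).
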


\end{document}